\newtheorem{theorem}{Theorem}[section]
\newtheorem{cor}[theorem]{Corollary}
\newtheorem{lemma}[theorem]{Lemma}
\newtheorem{prop}[theorem]{Proposition}
\theoremstyle{definition}
\newtheorem{definition}[theorem]{Definition}
\newtheorem{req}{Remark}
\newtheorem{question}{Question}
\def\N{\mathbb{N}}
\def\R{\mathbb{R}}
\let\O=\Omega
\let\e=\varepsilon
\let\t=\tilde
\let\ol=\overline
\let\ul=\underline
\let\mc=\mathcal
\def\1{\mathbbm{1}}
\def\l{\lambda_1}
\newenvironment{formula}[1]{\begin{equation}\label{#1}}
                       {\end{equation}\noindent}
\def\Fi#1{\begin{formula}{#1}}
\def\Ff{\end{formula}\noindent}
\title{\bf Influence of a road on a population in an ecological niche facing climate change}
\author[1]{Henri {\sc Berestycki}}
\author[1,2]{Romain {\sc Ducasse}}
\author[1]{Luca {\sc Rossi}}
\affil[1]{Ecole des Hautes Etudes en Sciences Sociales,  
		Centre d'Analyse et Math\'ematiques Sociales, CNRS, 54 boulevard Raspail 
		75006 Paris, France}
\affil[2]{Institut Camille Jordan, Université Claude Bernard Lyon 1}			
\begin{document}

\date{}
\maketitle

%-------------------------------------------------------------------------------

\noindent {\textbf{Keywords:} KPP equations, reaction-diffusion system, line with fast diffusion, generalized principal eigenvalue, moving environment, climate change, forced speed, ecological niche.} \\
\\
\noindent {\textbf{MSC:} 35K57, 92D25, 35B40, 35K40, 35B53.}

\begin{abstract}
%We introduce a model designed to describe the evolution of a population in presence of a line with fast diffusion, accounting for the effect of a road for instance, in an heterogeneous environment. We study two situations. First, when the population lives in an ecological niche, we show that the effect of the road is always deleterious and can even create an instability strong enough to lead the population to extinction. Then, we assume that the ecological niche can move as time goes by, mimicking the effect of climate change or of seasonal variation of resources. In this case, we show that the road can help the population to .

We introduce a model designed to account for the influence of a line with fast diffusion -- 
such as a road or another transport network -- on the dynamics
of a population in an ecological niche. This model consists of a system of coupled reaction-diffusion equations set on domains with different dimensions (line / plane).
We first show that, in a stationary climate, the presence of the line is always deleterious and can even lead the population to extinction. 
Next, we consider the case where the niche is subject to a displacement, 
representing the effect of a climate change or of 
seasonal variation of resources. 
We find that in such case the 
line with fast diffusion can help the population to persist. We also study several qualitative properties of this system. The analysis is based on a notion of \emph{generalized principal eigenvalue} developed and studied by the authors in \cite{BDR1}.
\end{abstract}

\section{Setting of the problem and main results}
%%%%%%%%%%%%%%%%%%%%%%%%%%%%%%%%%%%%%%%%%%%%%%%%%%%%
\subsection{Introduction}

It has long been known that the spreading of invasive species can be enhanced by human transportations. 
This effect has become more pervasive because of the globalization of trade and transport. It has led to the introduction of some species very far from their originating habitat. This was the case for instance for the ``tiger mosquito", \emph{Aedes Albopictus}. Originating from south-east Asia, eggs of this mosquito were introduced in several places around the world, mostly via shipments of used tires, see~\cite{tires}. Though the tiger mosquito has rather low active dispersal capabilities, these long-distance jumps are not the only dispersion mechanisms involved in its spreading. Recently, there has been evidence of passive dispersal of adult tiger mosquitoes by cars, at much smaller scales, leading to the colonizations of new territories along road networks, see \cite{car, ML}.

Other kind of networks with fast transportation appear to help the dispersal of biological entities. For instance, rivers can accelerate the spreading of plant pathogens, cf. \cite{JB}. It has also been observed that populations of wolves in the Western Canadian Forest move and concentrate along seismic lines (paths traced in forests by oil companies for testing of oil reservoirs), cf. \cite{W1, W2}. In a different register, we mention that the road network is known to have a driving effect on the spreading of epidemics. The ``black death" plague, for instance, spread first along the silk road and then spread along the main commercial roads in Europe, cf. \cite{Si}.

 All these facts suggest that networks with fast diffusion (roads, rivers, seismic lines...) are important factors to take into account in the study of the spreading of species. A mathematical formulation of a model accounting for this phenomenon was introduced in \cite{BRR1} by the first and third author, in collaboration with J.-M. Roquejoffre. The width of the lines with fast diffusion being much smaller than the natural scale of the problem, the model introduced in \cite{BRR1} consists in a system of coupled reaction-diffusion equations set on domains of different dimensions, namely a line and the plane or half-plane. An important feature is that it is \emph{homogeneous}, in the sense that the environment does not change from one place to another.\\

This ``homogeneity" hypothesis does not hold in several situations. For instance, many observations suggest that the spreading of invasive species can happen only when the environment is ``favorable enough". Considering again the tiger mosquito, the climate is known to limit its range of expansion. In America, the tiger mosquito has reached its northernmost boundary in New Jersey, southern New York and Pennsylvania. It is believed that cold temperatures are responsible for stopping its northward progression. This means that the \emph{ecological niche} of the tiger mosquito is limited by the climate conditions. In this paper, we call an ecological niche a portion of the space where a population can reproduce, surrounded by an \emph{unfavorable} domain, lethal for the population. From a biological perspective, the niche can be characterized by a suitable temperature range, or by a localization of resources, for instance.

An important feature of an ecological niche is that it can move as time goes by. For instance, global warming raising the temperature to the north of the territory occupied by the tiger mosquito, leads to a displacement of its ecological niche. This should entail the further spreading of the mosquito into places that were unaccessible before, see \cite{rnhf}. The displacement of the ecological niche could also result from seasonal variation of resources. A.B. Potapov and M.A. Lewis \cite{PL}, and the first author of this paper together with O. Diekmann, P. A. Nagelkerke and C. J. Zegeling~\cite{BDNZ} have introduced a model designed to describe the evolution of a population facing a shifting climate. We review some of their results in the next section.
\\

In the present paper, we introduce and study a model of population dynamics which takes into account both these phenomena: it combines a line with fast diffusion and an ecological niche, possibly moving in time, as a consequence for instance of a climate change. Consistently with the existing literature on the topic, we will refer in the sequel to the line with fast diffusion as the ``road" and to the rest of the environment as ``the field". The two phenomena we consider are in some sense in competition: the road enhances the diffusion of the species, while the ecological niche confines its spreading. Two questions naturally arise.
\begin{question}\label{question 1}
Does the presence of the road help or, on the contrary, inhibit the persistence of the species living in an ecological niche?
\end{question}
\begin{question}\label{question 2}
What is the effect of a moving niche?
\end{question}
\subsection{The model}
The goal of this paper is to investigate these questions.
We consider a two dimensional model, where the road is the one-dimensional line $\R\times \{0\}$ and the field is the upper half-plane $\R\times \R^+$. Let us mention that we can consider as well a field given by the whole plane. This does not change the results presented here, as we explain in Section \ref{whole plane} below. However, the notations become somewhat cumbersome. We refer to \cite{BDR1}, where road-field systems on the whole plane are considered.
%We could consider the field to be the whole plane, minus the road, however, considering a half-plane does not change the qualitative results of this paper, and this simplifies a lot the notation. We give more details about this simplification in the sequel.

 As in \cite{BRR1}, we use two distinct functions to represent the densities of the population on the road and in the field respectively: $u(t,x)$ is the density on the road at time $t$ and point $(x,0)$, while $v(t,x,y)$ is the density of population in the field at time $t$ and point $(x,y) \in \R\times\R^+$.

In the field, we assume that the population is subject to diffusion, and also to reaction, accounting for reproduction and mortality. The presence of the ecological niche is reflected by an heterogeneous reaction term which is negative outside a bounded set. For part of our study, we also allow the niche to move with constant speed $c\in \R$.
%environment is heterogeneous here, in the sense that the reaction term will depend on the space variable, and that it also moves as times goes by, with some speed $c\in \R$. 
On the road, the population is only subject to diffusion. The diffusions in the field and on the road are constant but \emph{a priori} different. Moreover, there are exchanges between the road and the field: the population can leave the road to go into the field and can enter the road from the field with some (a priori different) probabilities. 
%We mention that we could add a reaction term on the road (a mortality term for instance), but this would not qualitatively affect the analysis.  

Combining these definitions and effects, our system writes:
 \begin{equation}\label{frwcc}
\begin{cases}
\partial_{t}u-D\partial_{xx}u = \nu v\vert_{y=0}-\mu u , &\quad t >0,\ x\in \mathbb{R}, \\
\partial_{t}v-d\Delta v = f(x - ct,y,v), &\quad t >0 ,\ (x,y)\in \R\times\R^+, \\
- d\partial_{y}v\vert_{y=0}= \mu u-\nu v\vert_{y=0}, &\quad t >0 ,\ x\in \mathbb{R}.
\end{cases}
\end{equation}
 %\begin{equation}\label{frwcc}
%\begin{cases}
%\partial_{t}u-D\partial_{xx}u = \nu ( v\vert_{y=0^{+}} +v\vert_{y=0^{-}})-\mu u , &\quad t >0,\ x\in \mathbb{R}, \\
%\partial_{t}v-d\Delta v = f(x - ct,y,v), &\quad t >0 ,\ (x,y)\in \mathbb{R}^{2}\backslash(\mathbb{R}\times\{ 0 \} ), \\
%\mp d\partial_{y}v\vert_{y=0^{\pm}}= \frac{\mu}{2} u-\nu v\vert_{y=0^{\pm}}, &\quad t >0 ,\ x\in \mathbb{R}.
%\end{cases}
%\end{equation}
%
The first equation accounts for the dynamic on the road, the second for the dynamic in the field, and the third for the exchanges between the field and the road. 
Note that the term $ \nu v\vert_{y=0}-\mu u$ represents the balance of the exchange between the road and the field (gained by the road and lost by the field). 
Unless otherwise specified, we consider classical solutions when dealing with the parabolic problem \eqref{frwcc}.
%\footnote{ By classical solutions, we mean that $u$ is in $C^1(\R^+)\cap C^{0}([0,+\infty))$ in $t$  and in $C^2(\R)$ in $x$ while $v$ is in $C^1(\R^+)\cap C^{0}([0,+\infty))$ in $t$, and in $C^2(\R\times\R^+)\cap C^1(\R\times[0,+\infty))$ in $(x,y)$.}.
%\footnote{ By classical solutions, we mean that $u$ is $C^1$ in $t$  and $C^2$ in $x$ in the parabolic space $\R^+ \times \R$ while $v$ is $C^1$ in $t$, $C^2$ in $(x,y)$ and $C^1$ in $(x,y)$ up to the boundary in the parabolic space $\R\times\R^+$.}
%We define
%$$v\vert_{y=0}(t,x) := \lim_{y \to 0^+}v(t,x,y),
%\qquad\partial_{y}v\vert_{y=0}(t,x) = \lim_{y \to 0^{+}}  \partial_{y}v (t,x,y).$$ 
%
%These are pairs $(u,v)$ such that $u$ is $C^1$ in time, $C^2$ in space
%$u\in C^{1,2}(\mathbb{R})$\footnote{$C^{1,2}(\R)$ is the space of functions $u(t,x)$ defined for $(t,x) \in \R^+ \times \O$ that are $C^1$ with respect to $t$ and $C^{2}$ with respect to $x$.} and $v\in C^{1,2}(\mathbb{R}^{2}\backslash(\mathbb{R}\times\{ 0 \} ))$
%such that the following limits exist:
%$$v\vert_{y=0^{\pm}}(t,x) := \lim_{y \to 0^{\pm}}v(t,x,y),
%\qquad\partial_{y}v\vert_{y=0^{\pm}}(t,x) = \lim_{y \to 0^{\pm}} \pm \partial_{y}v (t,x,y).$$ 
%
%
In the system \eqref{frwcc}, $D, d, \mu, \nu$ are strictly positive constants and $c$ is a real number. Without loss of generality, we consider only the case $c\geq0$, that is, the niche is moving to the right. The nonlinear
term $f$ depends on the variable $x-ct$. This implies that the spatial heterogeneities are shifted with speed $c$ in the direction of the road. We will first consider the case $c=0$ that is, when there is no shift.

Throughout the paper, besides some regularity hypotheses (see Section~\ref{Our model})
 we assume that $f(x,y,v)$ vanishes at $v=0$ (no reproduction occurs if there are no individuals) and that the environment has a maximal carrying capacity:
\begin{equation}\label{sat}
\exists S>0 \ \text{ such that } \ f(x,y,v) <0 \ \text{ for all } \ v \geq S,\ (x,y)\in \mathbb{R}\times(0,+\infty).
\end{equation}
We then assume that the \emph{per capita} net growth rate is a decreasing function of the size of the population, that is,
\begin{equation}\label{KPPCC}
%\left\{
%\begin{array}{lll} 
%\exists \delta >0 \ \text{ such that } \ v \mapsto f(x,y,v) \in C^{1}([0,\delta]), \text{ uniformly in } (x,y), \\
%\exists S>0 \ \text{ such that } \ f(x,y,v) <0 \ \text{ for all } \ v \geq S,\ (x,y)\in \mathbb{R}\times[0,+\infty),\\
v \mapsto \frac{f(x,y,v)}{v} \text{ is strictly decreasing for} \ v\geq0, \ (x,y) \in \mathbb{R}\times[0,+\infty). 
%\end{array}
%\right.
\end{equation}
In particular, $f$ satisfies the Fisher-KPP hypothesis: $f(x,y,v)\leq f_v(x,y,0)v$, for $v\geq0$.

Last, we assume that the ecological niche is~{\em bounded}\,:
%$f$ satisfies the following \emph{bounded favorable zone} hypothesis:
\begin{equation}\label{BFZ}
\limsup_{\vert (x,y) \vert \to +\infty} f_{v}(x,y,0) <0.
\end{equation}
An example of a nonlinearity satisfying the above assumptions is $f(x,y,v) = \zeta(x,y)v-v^2$, with $\limsup_{\vert (x,y) \vert \to +\infty} \zeta(x,y) <0$.

To address Questions $1$ and $2$, we will compare the situation ``with the road" with the situation ``without the road". When there is no road, the individuals in the field who reach the boundary $\R\times\{0\}$ bounce back in the field instead of entering the road. In other terms, removing the road from system \eqref{frwcc} leads to the Neumann boundary problem
\begin{equation}\label{no road}
\left\{
\begin{array}{rll}
\partial_{t}v-d\Delta v &= f(x-ct,y,v), \quad &t>0,\ (x,y) \in   \mathbb{R}\times \mathbb{R}^{+}, \\
\partial_{y}v\vert_{y=0} &=0 , \quad&t>0,\ x \in  \mathbb{R}.
\end{array}
\right.
\end{equation}
%Let us mention that, if the field were the half-plane $\R^2 \backslash \R\times \{0\}$ and if the road were $\R\times\{0\}$, and if the non-linearity $f$ were symmetric with respect to
By a simple reflection argument, it is readily seen that the dynamical properties of this system are the same as for the problem in the whole plane, as explained in Section~\ref{whole plane} below.

In the sequel, we call \eqref{frwcc} the system ``with the road", while \eqref{no road} is called the system ``without the road". Problem \eqref{no road} describes the evolution of a population subject to a climate change only. In the next section, we recall the basic facts on this model. 
%
%To answer Question \ref{question 1}, we will study the system \eqref{frwcc} with $c=0$. One can expect the line to have a deleterious effect on the population, and this is indeed what we will see: we will exhibit a range of parameters for which a population persists without the road, but goes extinct in its presence. This is not obvious \emph{a priori} in our model, because there is no mortality term on the road, its role is only scattering the population.
%
%
%
%
%To answer Question \ref{question 2}, we will study the system \eqref{frwcc} but with general $c~\geq0$. We will show that there are situations where the population can successfully face faster shifting climate when there is a road than it would without it. 
%
%

Questions \ref{question 1} and \ref{question 2} translate in terms of the comparative dynamics of \eqref{frwcc} and~\eqref{no road}: we will see that, depending on the parameters, the solutions of these systems either asymptotically stabilize to a positive steady state, in which case the population persists, or vanish, meaning that the population goes extinct. Therefore, we will compare here the conditions under which one or the other of these scenarios occurs, for systems \eqref{frwcc} and \eqref{no road}.

\subsection{Related models and previous results}\label{previous models}

We present in this section some background about reaction-diffusion equations as well as the system from which \eqref{frwcc} is originated. These results will  be used in the~sequel.

Consider first the classical reaction-diffusion equation introduced by R. A. Fisher~\cite{F} and A. N. Kolmogorov, I. G. Petrovski and N. S. Piskunov \cite{KPP}:
\begin{equation}\label{eqKPP}
\partial_t v - d\Delta v = f(v), \quad  t>0, \ x\in \R^N,
\end{equation}
with $d>0$, $f(0)=f(1)=0$, $f(v)>0$ for $v \in (0,1)$ and $f(v)\leq f'(0)v$ for $v \in [0,1]$. The archetypical example is the logistic nonlinearity $f(v)=v(1-v)$. 
Under these conditions, we refer to \eqref{eqKPP} as the Fisher-KPP equation. It is shown in \cite{AW} that \emph{invasion} occurs for any nonnegative and not identically equal to zero initial datum. That is, any solution $v$ arising from such an initial datum converges to $1$ as $t$ goes to $+\infty$, locally uniformly in space. Moreover, if the initial datum has compact support, one can quantify this phenomenon by defining the \emph{speed of invasion} as a value $c_{KPP} > 0$ such that:
$$
\forall \, c > c_{KPP}, \quad \displaystyle{\sup_{\vert x \vert \geq ct}} v(t,x) \underset{t \to +\infty}{\longrightarrow} 0,
$$
and
$$
\forall \, c < c_{KPP}, \quad \displaystyle{\sup_{\vert x \vert \leq ct  }} \vert v(t,x) -1 \vert \  \underset{t \to +\infty}{\longrightarrow} 0.
$$
The speed of invasion can be explicitly computed in this case: $c_{KPP}=2\sqrt{d f^{\prime}(0)}$. 
\\
\\
Building on equation \eqref{eqKPP},  Potapov and Lewis \cite{PL} and H. Berestycki, O. Diekmann, C. J. Nagelkerke and P. A. Zegeling  \cite{BDNZ} proposed a model describing the effect of a climate change on a population in dimension $1$. 
H. Berestycki and L. Rossi in~\cite{BRforcedspeed} have further studied this model in higher dimensions and under more general hypotheses . It consists in the following reaction-diffusion equation
\begin{equation}\label{OriginalCC}
\partial_{t}v -d\Delta v = f(x-ct,y,v)  , \quad  t>0  ,\   (x,y)\in \R^2,
\end{equation}
with $f$ satisfying the same hypotheses \eqref{KPPCC} and \eqref{BFZ} extended to the whole plane. The favorable zone moves with constant speed $c$ in the $x$-direction. Let us mention that, if the nonlinearity $f$ is even with respect to the vertical variable, i.e., if $f(\cdot,y,\cdot)=f(\cdot,-y,\cdot)$ for every $y\in \R$, then equation \eqref{OriginalCC} is equivalent to the problem \eqref{no road} ``without the road", at least for solutions which are even in the variable $y$. It turns out that the results of \cite{BRforcedspeed} hold true for such problem, as we explain in details in Section \ref{whole plane} below.

 In the frame moving with the favorable zone, \eqref{OriginalCC} rewrites
%We will say that $c \in \mathbb{R}$ is the \emph{speed of the favorable zone} and that $e\in \mathbb{S}^{N-1}$ is the \emph{direction of the favorable zone}. 
%Observe that, if $c\neq 0$, the favorable zone ``goes away", and it is easy to see that every solution of \eqref{OriginalCC} arising from a bounded initial datum goes to zero as $t$ goes to $+\infty$. To study \eqref{OriginalCC} it is more interesting to work in the frame moving in direction $e$ with speed $c$. There, we keep track of the favorable zone, and equation \eqref{OriginalCC} rewrites:
%
%
\begin{equation}\label{OriginalCCmf}
\partial_{t}v -d\Delta v-c\partial_x v = f(x,y,v)  , \quad  t>0  ,\   (x,y) \in \R^2.
\end{equation}
The dependance of the nonlinear term in $t$ disappears and is replaced by a drift-term. From a modeling point of view, a drift term can also describe a stream or a wind, or any such transport. Intuitively, the faster the wind, the harder it would be for the population to stay in the favorable zone (that does not move in this frame). Hence, the faster the favorable zone moves, the harder it should be for the population to keep track with it. This intuition is made rigorous in \cite{PL, BDNZ, BRforcedspeed}, where the authors prove the following (for $x\in \R$, we write $[x]^+ := \max \{x ,0\}$).

%\begin{prop}[{\cite[Theorem 1.2]{BRforcedspeed}}]\label{rappel CC}
\begin{prop}[{\cite{BRforcedspeed}}]\label{rappel CC}
There exists $c_{N} \geq 0$ such that
\begin{enumerate}[$(i)$]
\item If $0 \leq c < c_N$, there is a unique bounded positive stationary solution of \eqref{OriginalCCmf}, and any solution arising from a non-negative, not identically equal to zero, bounded initial datum converges to this stationary solution as $t$ goes to $+\infty$. 

\item If $c\geq c_N$, there is no bounded positive stationary solution of \eqref{OriginalCCmf} and any solution arising from a non-negative, not identically equal to zero
%compactly supported 
initial datum converges to zero uniformly as $t$ goes to $+\infty$. 

\end{enumerate}
\end{prop}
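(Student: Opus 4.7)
The plan is to reduce the dichotomy to the sign of a generalized principal eigenvalue of the linearization at zero and then run a classical KPP sub/super-solution scheme. Set $\zeta(x,y):=f_v(x,y,0)$, which by \eqref{BFZ} is bounded above by some $-\delta<0$ outside a compact set, and introduce the linear operator
\[
\L_c\phi := -d\Delta\phi - c\partial_x\phi - \zeta(x,y)\phi \qquad \text{on }\R^2,
\]
together with its generalized principal eigenvalue
\[
\l(c) := \sup\{\lambda \in \R : \exists\ \phi \in C^2(\R^2),\ \phi>0,\ \L_c\phi\geq\lambda\phi\}.
\]
Since $-\zeta$ is confining at infinity, the theory developed in \cite{BDR1} guarantees that $\l(c)$ is finite and attained by a positive principal eigenfunction $\phi_c$, and that an adjoint eigenfunction $\phi_c^*$ of $\L_c^T=-d\Delta+c\partial_x-\zeta$ exists with the same eigenvalue; both are exponentially decaying.

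The second step is to control the dependence of $\l$ on $c$. The gauge transformation $\phi(x,y)=e^{-cx/(2d)}\psi(x,y)$ yields
\[
\L_c\phi = e^{-cx/(2d)}\Bigl(-d\Delta\psi + \tfrac{c^2}{4d}\psi - \zeta\psi\Bigr),
\]
so $\L_c$ is gauge-equivalent to the symmetric Schr\"odinger operator $-d\Delta-\zeta+c^2/(4d)$. By standard spectral theory for confining potentials, the bottom of the spectrum of the latter is $\Lambda_0+c^2/(4d)$, where $\Lambda_0$ is the principal eigenvalue of $-d\Delta-\zeta$ (negative precisely when the niche is favorable enough to sustain the population without drift). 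Matching the $L^2$ ground state through the weight $e^{-cx/(2d)}$ identifies $\l(c)=\Lambda_0+c^2/(4d)$. Hence $\l$ is continuous, strictly increasing on $[0,+\infty)$, and tends to $+\infty$, so the threshold
\[
c_N := \begin{cases} 2\sqrt{-d\,\Lambda_0} & \text{if } \Lambda_0<0,\\ 0 & \text{if } \Lambda_0\geq 0,\end{cases}
\]
satisfies $\l(c)<0$ for $c<c_N$ and $\l(c)\geq 0$ for $c\geq c_N$.

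The subcritical case $c<c_N$, where $\l(c)<0$, is then handled by the classical KPP machinery. For $\e>0$ small, $\e\phi_c$ is a strict stationary sub-solution: one has $\L_c(\e\phi_c)=\l(c)\e\phi_c$, and by \eqref{KPPCC} the quantity $\zeta - f(\cdot,\e\phi_c)/(\e\phi_c)$ tends pointwise to $0$ as $\e\to 0$, so the required inequality $\l(c)<\zeta-f/(\e\phi_c)$ eventually holds. The constant $S$ from \eqref{sat} gives a supersolution, and monotone iteration between $\e\phi_c$ and $S$ yields a bounded positive stationary solution $V$. Uniqueness crucially uses the strict monotonicity in \eqref{KPPCC}: for two such solutions $V_1,V_2$ put $\alpha^*:=\inf\{\alpha>0:\alpha V_1\geq V_2\}$, which is finite by the exponential decay of $V_i$ (entailed by \eqref{BFZ}); if $\alpha^*>1$, then $\alpha^*V_1$ is a strict super-solution of the equation satisfied by $V_2$, and at a contact point the strict inequality $f(\cdot,\alpha^*V_1)/(\alpha^*V_1)<f(\cdot,V_1)/V_1$ combined with the strong maximum principle yields a contradiction. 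Convergence of an arbitrary bounded nonnegative nontrivial initial datum follows by bracketing the solution between the monotone semi-orbits starting from $\e\phi_c$ and $S$, both converging to $V$.

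The supercritical case $c\geq c_N$, where $\l(c)\geq 0$, uses the same eigenvalue. Non-existence proceeds via the adjoint eigenfunction: if $V>0$ were a bounded stationary solution, the strict KPP inequality gives
\[
\L_c V = V\Bigl(\frac{f(\cdot,V)}{V}-\zeta\Bigr) < 0 \quad \text{pointwise},
\]
and multiplying by $\phi_c^*$ and integrating, which is legitimate by the exponential decay of $\phi_c^*$ and the boundedness of $V$, produces $\l(c)\int V\phi_c^*<0$, contradicting $\l(c)\geq 0$. For extinction, the tangent KPP inequality $f(\cdot,v)\leq\zeta v$ gives $\partial_t v+\L_c v\leq 0$, so $v(t,\cdot)$ is dominated by $Ke^{-\l(c)t}\phi_c$ for $K$ large enough; if $\l(c)>0$ this is exponential decay, and if $\l(c)=0$ one combines this bound with parabolic compactness and a standard $\omega$-limit argument\,: any subsequential limit is a bounded nonnegative stationary solution, and having ruled out positive ones, the limit must be zero. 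The main obstacle is precisely this borderline case $c=c_N$, where the weaker tangent inequality no longer produces decay and one must genuinely exploit the strict monotonicity in \eqref{KPPCC}, via the adjoint eigenfunction for non-existence and via the compactness/$\omega$-limit argument for extinction.
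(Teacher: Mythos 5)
The core of your argument --- the gauge change $\phi=e^{-cx/(2d)}\psi$ giving the exact formula $\l(c)=\Lambda_0+c^2/(4d)$, followed by a KPP sub/super-solution and sliding scheme keyed to the sign of $\l(c)$ --- is precisely the strategy of \cite{BRforcedspeed}, which the paper cites for this statement. However, two of your intermediate claims are not correct as stated and would need repair.

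First, the potential $-\zeta=-f_v(\cdot,\cdot,0)$ is \emph{bounded}, not confining: by \eqref{BFZ} it is only bounded below by a positive constant near infinity, and by global Lipschitz continuity it is bounded above. Consequently the essential spectrum of $-d\Delta-\zeta$ on $\R^2$ starts at $\delta:=-\limsup_{\infty}\zeta>0$, and the exponential decay of the principal eigenfunction $\phi_c$ and of the adjoint eigenfunction $\phi_c^*$ (on which your non-existence argument relies through the pairing $\int\phi_c^*\L_cV$) holds only under the spectral-gap condition $\Lambda_0<\delta$. Without it there is still a generalized principal eigenfunction (obtained by exhaustion, as in Proposition~\ref{limit}), but it need not decay, and the integration by parts is not justified. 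The reference handles this by a sliding argument against the generalized eigenfunction, using instead the decay of the \emph{stationary solution} $V$ at infinity (the analogue of Lemma~\ref{dec}), which is available unconditionally under \eqref{BFZ}; the adjoint pairing is not used. You should either restrict your adjoint argument to the case $\Lambda_0<\delta$ and treat the complementary case (where $\l(c)\geq\delta>0$ for every $c$) separately, or replace it by the sliding argument.

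Second, the extinction step ``$v(t,\cdot)$ is dominated by $Ke^{-\l(c)t}\phi_c$'' does not initialize: a bounded (and in item $(ii)$ possibly unbounded) nonnegative datum is not majorized by any multiple of $\phi_c$, since $\phi_c$ decays at infinity. One must first use a time-$1$ smoothing together with the decay-at-infinity estimate (the analogue of Lemma~\ref{dec}) to obtain, for $t\geq1$, a bound $v(t,\cdot)\leq Ce^{-\eta|(x,y)|}$ independent of $t$, and only then compare with $Ke^{-\l(c)t}\phi_c$; likewise the $\omega$-limit argument at the borderline $c=c_N$ needs this uniform decay to upgrade locally uniform to uniform convergence. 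Finally, a small sign slip: the condition for $\e\phi_c$ to be a subsolution is $\l(c)\le f(\cdot,\e\phi_c)/(\e\phi_c)-\zeta$, i.e.\ $\l(c)+\bigl(\zeta-f(\cdot,\e\phi_c)/(\e\phi_c)\bigr)\le 0$, and the convergence of the bracket to $0$ must be uniform in $(x,y)$, which follows from the stated hypothesis that $v\mapsto f(x,y,v)$ is $C^1$ near $0$ uniformly in $(x,y)$, not merely pointwise.
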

%The key tool used in \cite{BRforcedspeed} to derive this result is the notion of \emph{generalized principal eigenvalue}. This notion arises naturally when dealing with KPP equations, we refer the reader to \cite{BHReigen, BR1} for a general treatment of this topic. We will extend here this notion to the framework of road-field models.
%A positive stationary solution of \eqref{OriginalCCmf} is called a \emph{traveling pulse}: it vanishes at infinity and, in the original (i.e., non-moving) frame, its profile is shifted in time in the $x$-direction with constant speed $c$. 
%\\
%\\
Our system \eqref{frwcc} is also inspired by the \emph{road-field model}, introduced by two of the authors with J.-M. Roquejoffre in \cite{BRR1}. They studied the influence of a line with fast diffusion on a population in an environment governed by a homogeneous Fisher-KPP equation. Their model reads
\begin{equation}\label{OriginalFR}
\begin{cases}
\partial_{t}u-D\partial_{xx}u = \nu v\vert_{y=0}-\mu u , &\quad t >0,\ x\in \mathbb{R}, \\
\partial_{t}v-d\Delta v = f(v), &\quad t >0 ,\ (x,y)\in \R\times\R^+, \\
- d\partial_{y}v\vert_{y=0}= \mu u-\nu v\vert_{y=0}, &\quad t >0 ,\ x\in \mathbb{R}.
\end{cases}
\end{equation}
%\begin{equation}\label{OriginalFR}
%\left\{
%\begin{array}{llll}
%\partial_{t}u-D{2}_{x}u &= \nu ( v\vert_{y=0^{+}} +v\vert_{y=0^{-}})-\mu u , &\quad t >0,\ x\in \mathbb{R}, \\
%\partial_{t}v-d\Delta v &= f(v), &\quad t >0 ,\ (x,y)\in \mathbb{R}^{2}\backslash(\mathbb{R}\times\{ 0 \} ), \\
%-d\partial_{y}v\vert_{y=0^{\pm}}&= \frac{\mu}{2} u-\nu v\vert_{y=0^{\pm}}, &\quad t >0 ,\ x\in \mathbb{R},
%\end{array}
%\right.
%\end{equation}
%
The novelty in our system \eqref{frwcc} with respect to system \eqref{OriginalFR} is that we allow the nonlinearity to depend on space and time variables. The main result of \cite{BRR1} can be summarized as follows.
\begin{prop}[{\cite[Theorem 1.1]{BRR1}}]\label{rappel FR} 
Invasion occurs in the direction of the road for system \eqref{OriginalFR} with a speed $c_{H}$. That is, for any solution $(u,v)$ of \eqref{OriginalFR} arising from a compactly supported non-negative not identically equal to zero initial datum, there holds
$$
\forall h>0 , \ \forall  c<c_{H}, \quad
\sup_{\substack{ \vert x \vert \leq ct \\ \vert y \vert \leq h}} \vert v(t,x,y) -1\vert \underset{t\to +\infty}{\longrightarrow}  0 ,\quad \sup_{\substack{\vert x \vert \leq ct}}  \left\vert u(t,x) -\frac{\nu}{\mu} \right\vert \underset{t\to +\infty}{\longrightarrow}  0,
$$
 and
 $$
 \forall  c>c_{H}, \quad
\sup_{\substack{ \vert (x,y) \vert \geq ct }} v(t,x,y) \underset{t\to +\infty}{\longrightarrow}   0 ,\quad \sup_{\substack{\vert x \vert \geq ct}} u(t,x) \underset{t\to+ \infty}{\longrightarrow}  0.
$$
Moreover, $c_H \geq c_{KPP}$ and
$$
c_{H} > c_{KPP} \quad \text{ if and only if }\ D>2d.
$$
\end{prop}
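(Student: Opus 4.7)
I would follow the classical Fisher--KPP template, using the KPP inequality $f(v)\le f'(0)v$ to reduce the spreading analysis to the linearisation of~\eqref{OriginalFR} around $(0,0)$. The workhorse is the exponential travelling ansatz
\begin{equation*}
u(t,x)=A\,e^{-\alpha(x-ct)},\qquad v(t,x,y)=B\,e^{-\alpha(x-ct)-\beta y},
\end{equation*}
with $\alpha,\beta>0$ and amplitudes $A,B>0$. Plugging into the linearised system yields three algebraic relations: the bulk $v$-equation gives $c\alpha=d(\alpha^2+\beta^2)+f'(0)$, the road $u$-equation gives $A(\mu-D\alpha^2+c\alpha)=\nu B$, and the exchange condition gives $d\beta B=\mu A-\nu B$. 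Eliminating the amplitude ratio $A/B$ collapses the last two into one equation, so the admissible triples $(c,\alpha,\beta)$ form a curve. I would then define $c_H$ as the infimum of those $c\ge 0$ for which this curve admits a solution with $\alpha,\beta>0$ and both amplitudes positive.

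\textbf{Upper and lower bounds on $c_H$.} For $c>c_H$, the exponential ansatz furnishes an explicit super-solution of the linearised problem and therefore, by the KPP hypothesis, of the full nonlinear system; capping it by the stationary state $(\nu/\mu,1)$ and applying the parabolic comparison principle yields the outer-cone decay $\sup_{|(x,y)|\ge ct}v\to 0$ and $\sup_{|x|\ge ct}u\to 0$. For $c<c_H$ I would construct compactly supported sub-solutions of the linearised system inside a box moving at speed $c$, built from the principal eigenfunction of the truncated coupled operator on $[-L,L]\times[0,L]$ with appropriate boundary conditions. As $L\to\infty$ this principal eigenvalue converges to the spectral quantity characterising $c_H$, so for $L$ large enough the sub-solution has strictly positive exponential growth in the moving frame; a boundary Harnack argument combined with parabolic regularity then upgrades persistence into uniform convergence to $(\nu/\mu,1)$ on $\{|x|\le ct,\ 0\le y\le h\}$.

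\textbf{Comparison with $c_{KPP}$.} The first constraint $c\alpha=d(\alpha^2+\beta^2)+f'(0)$ alone already forces $c\ge d\alpha+f'(0)/\alpha\ge 2\sqrt{df'(0)}=c_{KPP}$ for every feasible pair, giving $c_H\ge c_{KPP}$ unconditionally. The equality case reduces to whether the full coupling constraint is compatible, in a limiting sense, with the KPP minimiser $(\alpha^*,\beta)=(\sqrt{f'(0)/d},0)$. Expanding the eliminated dispersion relation near this point produces an expression whose sign is governed by $D-2d$: for $D\le 2d$ the coupled admissible set touches $(\alpha^*,0)$ in the limit and $c_H=c_{KPP}$, whereas for $D>2d$ it stays uniformly away and the infimum is strictly larger.

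\textbf{Main obstacle.} The hardest steps are the lower bound on $c_H$ and the sharp threshold $D=2d$. The sub-solution construction is delicate because the two-dimensional $v$-component must feed the one-dimensional $u$-equation through the Robin-type exchange, and the principal eigenvalue of the truncated coupled operator must be matched precisely with the infimum defining $c_H$. The strict inequality criterion $c_H>c_{KPP}\iff D>2d$ is not a direct monotonicity fact either: it emerges only after a careful perturbative analysis of the dispersion curve at the KPP critical point, which is the genuine heart of the matter.
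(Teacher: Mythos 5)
This proposition is a recalled result in the paper, quoted verbatim from \cite[Theorem 1.1]{BRR1}; the present paper contains no proof of it, so there is no in-paper argument to compare against. Your sketch is, however, a faithful reconstruction of the route actually taken in \cite{BRR1}: the exponential ansatz and the three dispersion relations you write down are exactly the ones used there, $c_H$ is characterised as the threshold speed for admissibility of that ansatz, the outer-cone estimate for $c>c_H$ is obtained from the exponential supersolution capped by $(\nu/\mu,1)$, and the inner estimate for $c<c_H$ comes from truncated principal eigenfunctions of the coupled operator on a growing box. The inequality $c\alpha=d(\alpha^{2}+\beta^{2})+f'(0)$ with $\beta\ge 0$ forcing $c\ge 2\sqrt{df'(0)}=c_{KPP}$ is precisely the argument for $c_H\ge c_{KPP}$, and the $D=2d$ threshold does emerge from checking whether the road/exchange constraint is compatible with the KPP minimiser $(\alpha^{*},\beta)=(\sqrt{f'(0)/d},0)$: at that point the two constraints collapse to $c\alpha^{*}=D\alpha^{*2}$, i.e.\ $D=2d$, so $D>2d$ pushes the admissible curve off the KPP point, while $D\le 2d$ still yields $c_H=c_{KPP}$ (in \cite{BRR1} this last regime requires an oscillatory-in-$y$ variant of the ansatz when $\beta$ would otherwise be imaginary, a point your sketch glosses over but which does not change the conclusion). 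One genuine inaccuracy: the final upgrade from local persistence to uniform convergence to $(\nu/\mu,1)$ in \cite{BRR1} is not a boundary Harnack argument; it is a monotone-iteration scheme starting from the compactly supported subsolution, combined with a Liouville-type classification showing $(\nu/\mu,1)$ is the unique positive bounded stationary solution. With that correction, your plan matches the cited proof.
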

Recall that $c_{KPP} = 2d\sqrt{f^{\prime}(0)}$ is the speed of invasion for \eqref{eqKPP}, that is, in the absence of a road. Hence, this result means that the speed of invasion in the direction of the road is enhanced, provided the diffusion on the road $D$ is large enough compared to the diffusion in the field $d$.

Several works have subsequently extended model \eqref{OriginalFR} in several ways. The article~\cite{BRR2} studies the influence of drift terms and mortality on the road. In a further paper~\cite{BRR3}, H. Berestycki, J.-M. Roquejoffre and L. Rossi \cite{BRR3} compute the spreading speed in all directions of the field. The paper \cite{GMZ} treats the case where the exchanges coefficients $\mu, \nu $ are not constant but periodic in $x$. The articles \cite{P1, P2} study non-local exchanges and \cite{Diet1} considers a combustion nonlinearity instead of the KPP one together with other aspects of the problem. The articles \cite{BCRR_sem,BCRR} study the effect of non-local diffusion. Different geometric situations are considered in \cite{RTV, Dcurved}. The first one treats the case where the field is a cylinder with its boundary playing the role of the road, and the second one studies the case where the road is curved.

%%%%%%%%%%%%%%%%%%%%%%%%%%%%%%%%%%%%%%%%%%%%%%%%%%%%%

%%%%%%%%%%%%%%%%%%%%%%%%%%%%%%%%%%%%%%%%%%%%%%%%%%%%%

%

\subsection{Main results}\label{Our model}

We assume in the whole paper that the nonlinearity $f$ is globally Lipschitz-continuous and that $v \mapsto f(x,y,v)$ is of class $C^1$ in a neighborhood of $0$, uniformly in $(x,y)$. The hypotheses~\eqref{sat},~\eqref{KPPCC} and~\eqref{BFZ} will also be understood to hold throughout the whole paper without further mention. For notational simplicity, we define
$$
m(x,y):= f_v(x,y,0).
$$
This is a bounded function on $\R\times \R^+$.

%For notational simplicity, we assume in the whole paper that the nonlinearity $f$ is symmetric with respect to the road, i.e., that $f(\cdot,-y,\cdot) = f(\cdot,y,\cdot)$, for all $yï¿½\in \R$. The role of this hypothesis  is only to lighten the notation, the results we prove hold true without it. Under this symmetry assumption, we can restrict the system \eqref{frwcc} to an half-space, say the upper one, reducing to
As in the case of the climate change model \eqref{OriginalCC}, it is natural to work in the frame moving along with the forced shift. There, the system ``with the road"~\eqref{frwcc}~rewrites
%\begin{equation}\label{mf}
%\left\{
%\begin{array}{rll}
%\partial_{t}u(t,x)-D\partial_{xx}^{2}u(t,x) - c\partial_{x}u(t,x)&= \nu v(t,x,0)-\mu u(t,x), \quad &t>0, x \in  \mathbb{R} \\
%\partial_{t}v(t,x,y)-d\Delta v(t,x,y) -c\partial_{x}v(t,x) &= f(x,y,v), \quad &t>0, (x,y) \in   \mathbb{R}\times \mathbb{R}^{+} \\
%-d\partial_{y}v(t,x,0) &= \mu u(t,x)-\nu v(t,x,0), \quad &t>0, x \in  \mathbb{R},
%\end{array}
%\right.
%\end{equation}
%
\begin{equation}\label{mf}
\left\{
\begin{array}{rll}
\partial_{t}u-D\partial_{xx}u - c\partial_{x}u&= \nu v\vert_{y=0}-\mu u, \quad &t>0,\ x \in  \mathbb{R}, \\
\partial_{t}v-d\Delta v -c\partial_{x}v &= f(x,y,v), \quad &t>0,\ (x,y) \in   \mathbb{R}\times \mathbb{R}^{+}, \\
-d\partial_{y}v\vert_{y=0} &= \mu u-\nu v\vert_{y=0}, \quad &t>0,\ x \in  \mathbb{R}.
\end{array}
\right.
\end{equation}
Likewise,  in the moving frame the system ``without the road" \eqref{no road} takes the form:

\begin{equation}\label{no road mf}
\left\{
\begin{array}{rll}
\partial_{t}v-d\Delta v -c\partial_{x}v &= f(x,y,v), \quad &t>0,\ (x,y) \in   \mathbb{R}\times \mathbb{R}^{+}, \\
-\partial_{y}v\vert_{y = 0} &= 0, \quad &t>0, \ x \in  \mathbb{R}.
\end{array}
\right.
\end{equation}
In this paper we investigate the long-time behavior of solutions of \eqref{mf} in comparison with \eqref{no road mf}. We will derive a dichotomy concerning two opposite scenarios: \emph{extinction} and \emph{persistence}.

\begin{definition}\label{def 1}

For the systems \eqref{mf} or \eqref{no road mf}, we say that

\begin{enumerate}[(i)]
\item \emph{extinction} occurs if every solution arising from a non-negative compactly supported initial datum converges uniformly to zero as $t$ goes to $+\infty$;

\item \emph{persistence} occurs if every solution arising from a non-negative not identically equal to zero compactly supported initial datum converges locally uniformly to a positive stationary solution as $t$ goes to $+\infty$.
\end{enumerate}
\end{definition}
We will show that the stationary solution, when it exists, takes the form of a \emph{traveling pulse}. In the original frame, it decays at infinity, due to the assumption~\eqref{BFZ}.

%%%%%%%%%%%%%%%%%%%%%%%%%%%%%%%%%%%%%%%%%%%%%%%%%%%%%%%%%%%%%%%%%%%%%%%%%%%%%%%%%%%%%
%%%%%%%%%%%%%%%%%%%%%%%%%%%%%%%%%%%%%%%%%%%%%%%%%%%%%%%%%%%%%%%%%%%%%%%%%%%%%%%%%%%%%
%%%%%%%%%%%%%%%%%%%%%%%%%%%%%%%%%%%%%%%%%%%%%%%%%%%%%%%%%%%%%%%%%%%%%%%%%%%%%%%%%%%%%
%%%%%%%%%%%%%%%%%%%%%%%%%%%%%%%%%%%%%%%%%%%%%%%%%%%%%%%%%%%%%%%%%%%%%%%%%%%%%%%%%%%%%
%%%%%%%%%%%%%%%%%%%%%%%%%%%%%%%%%%%%%%%%%%%%%%%%%%%%%%%%%%%%%%%%%%%%%%%%%%%%%%%%%%%%%
%%%%%%%%%%%%%%%%%%%%%%%%%%%%%%%%%%%%%%%%%%%%%%%%%%%%%%%%%%%%%%%%%%%%%%%%%%%%%%%%%%%%%
%%%%%%%%%%%%%%%%%%%%%%%%%%%%%%%%%%%%%%%%%%%%%%%%%%%%%%%%%%%%%%%%%%%%%%%%%%%%%%%%%%%%%
In the Fisher-KPP setting considered in this paper, it is natural to expect the phenomena of extinction and persistence to be characterized by the stability of the null state $(0,0)$, i.e., by the sign of the smallest eigenvalue $\lambda$ of the linearization of the system~\eqref{mf} around~$(u,v) = (0,0)$:

\begin{equation}\label{lsmf}
\left\{
\begin{array}{rll}
-D\partial_{xx}\phi - c\partial_{x}\phi& - [ \nu \psi\vert_{y=0}-\mu \phi] = \lambda \phi , \quad &x \in  \mathbb{R}, \\
-d\Delta \psi-c\partial_{x}\psi & - m(x,y)\psi = \lambda \psi, \quad &(x,y) \in   \mathbb{R}\times \mathbb{R}^{+}, \\
-d\partial_{y}\psi\vert_{y=0} &= \mu \phi-\nu \psi\vert_{y=0}, \quad &x \in  \mathbb{R}.
\end{array}
\right.
\end{equation}
The smallest eigenvalue of an operator, when associated with a positive eigenfunction, is called the \emph{principal eigenvalue}. When dealing with operators that satisfy some compactness and monotonicity properties, the existence of the principal eigenvalue can be deduced from the Krein-Rutman theorem, see \cite{KR}. However, the system~\eqref{lsmf} is set on an unbounded domain. Hence, the Krein-Rutman theorem does not directly apply. Therefore, we make use in this paper of a notion of \emph{generalized principal eigenvalue},
in the spirit of the one introduced by 
H.~Berestycki, L.~Nirenberg and S.~Varadhan~\cite{BNV} 
to deal with elliptic operators on non-smooth bounded domains under Dirichlet conditions.
The properties of this notion have been later extended by 
H.~Berestycki and L.~Rossi \cite{BR4} to unbounded domains. 
The authors of the present paper introduced a notion of generalized principal eigenvalue adapted for road-field systems in \cite{BDR1}, that we will use here. 

In the sequel, $\l \in \R$ denotes the generalized principal eigenvalue of \eqref{lsmf}. Its precise definition is given in \eqref{fgpe} below and we also recall in Section \ref{recall} its relevant properties. Our first result states that indeed the sign of $\l$ characterizes the long-time behavior of \eqref{mf}.
Namely, there is a dichotomy between persistence and extinction given in Definition~\ref{def 1}, that is completely determined by the sign of~$\lambda_{1}$.
\begin{theorem}\label{persistence}
Let $\lambda_{1}$ be the generalized principal eigenvalue of system \eqref{lsmf}. 

\begin{enumerate}[(i)]

\item If $\lambda_{1}<0$,  system \eqref{mf} admits a unique positive bounded stationary solution and there is persistence.

\item If $\lambda_{1}\geq0$,  system \eqref{mf} does not admit any positive stationary solution and there is extinction.

\end{enumerate}
\end{theorem}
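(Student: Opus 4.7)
The plan is to adapt the classical Fisher--KPP sub-/super-solution strategy to the coupled road--field system~\eqref{mf}, the key ingredient being the generalized principal eigenvalue $\lambda_1$ from~\cite{BDR1} together with the comparison and strong maximum principles for~\eqref{mf} (which propagate strict positivity of $v$ up to $y=0$ and then, through the exchange boundary condition, to $u$).

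For part~(i), suppose $\lambda_1 < 0$. Using the standard approximation of $\lambda_1$ by Dirichlet-type principal eigenvalues on bounded truncations (a property established in~\cite{BDR1}), fix a large half-ball $B_R$ for which the truncated principal eigenvalue $\lambda_R$ is still strictly negative, with a positive principal eigenpair $(\phi_R,\psi_R)$. Since $f(x,y,v) = m(x,y) v + o(v)$ as $v\to 0^+$, uniformly in $(x,y)$, the extension by zero of $\varepsilon(\phi_R,\psi_R)$ is a (weak) sub-solution of the stationary version of~\eqref{mf} for $\varepsilon>0$ small enough. The constant pair $(\nu S/\mu, S)$, with $S$ from~\eqref{sat}, is a super-solution. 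A monotone iteration between these barriers produces a bounded positive stationary solution $(U,V)$. Uniqueness then follows from the strict KPP assumption~\eqref{KPPCC}: given two such solutions $(U_1,V_1)$, $(U_2,V_2)$, the quantity $t^{*} := \inf\{t\geq 1 : t(U_1,V_1) \geq (U_2,V_2)\}$ must equal $1$, because $t^{*}>1$ combined with the strong maximum principle and~\eqref{KPPCC} would yield a contradiction at a contact point (in the interior, on $y=0$, or at infinity). Convergence for Cauchy problems is obtained by trapping the solution, after a short time and on $B_R$, between the parabolic evolutions of $\varepsilon(\phi_R,\psi_R)$ (monotone increasing in $t$) and of $(\nu S/\mu, S)$ (monotone decreasing in $t$), both of which converge to bounded positive stationary solutions, necessarily equal to $(U,V)$ by uniqueness.

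For part~(ii), suppose $\lambda_1 \geq 0$. Non-existence of a positive bounded stationary solution follows from the characterization of $\lambda_1$ in~\cite{BDR1}: such a solution $(U,V)$ would, by~\eqref{KPPCC}, satisfy~\eqref{lsmf} with eigenvalue $0$ and with $m$ replaced by the strictly smaller potential $f(x,y,V)/V$; monotonicity of the generalized principal eigenvalue with respect to the zeroth-order coefficient would then force $\lambda_1<0$, a contradiction. For the extinction statement, when $\lambda_1>0$ one takes a positive generalized eigenfunction $(\phi,\psi)$ (constructed on a truncation and then extended or estimated from~\cite{BDR1}) and uses $Me^{-\lambda_1 t/2}(\phi,\psi)$ as a super-solution for $M$ large, yielding uniform exponential decay. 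In the critical case $\lambda_1=0$, one compares from above with the solution starting from the constant super-solution $(\nu K/\mu, K)$, $K\geq S$; by a standard shift argument, this solution is non-increasing in $t$ and converges to a non-negative bounded stationary solution, which by the non-existence part must be $(0,0)$. Locally uniform decay is then promoted to uniform decay by invoking~\eqref{BFZ}, which makes the reaction strictly negative outside a compact set and forbids concentration at infinity.

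The main obstacle I expect is the critical case $\lambda_1 = 0$ in part~(ii): the linear super-solution no longer decays, so extinction must be extracted from the absence of positive stationary solutions via a monotone convergence argument, and the confinement provided by~\eqref{BFZ} must be used to upgrade local-in-space decay to uniform decay on the whole road--field system, all while carefully tracking the coupling through the exchange boundary condition $-d\partial_y v|_{y=0} = \mu u - \nu v|_{y=0}$.
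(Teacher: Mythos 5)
Your overall architecture matches the paper's: truncated principal eigenpairs give a compactly supported subsolution, a large constant pair gives a supersolution, the parabolic evolutions of these barriers are monotone and sandwich the solution, and a Liouville-type uniqueness plus the KPP structure drive the convergence. That part is sound and essentially the paper's own route. But two of the key inner steps, as you have written them, have genuine gaps.

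First, the uniqueness argument. You set $t^{*}:=\inf\{t\geq 1 : t(U_1,V_1)\geq(U_2,V_2)\}$, but this set may well be \emph{empty}: under~\eqref{BFZ} both stationary solutions decay to zero at infinity (this is exactly the content of the paper's Lemma~\ref{dec}), so the ratio $V_2/V_1$ need not be bounded. Consequently there may be no finite $t^{*}$, and in any case the ``contact point at infinity'' you invoke is not something the strong maximum principle can handle. The paper avoids this by first lifting one solution by a constant, comparing $\theta(\tilde u,\tilde v)$ against $(u+\e\nu/\mu,\,v+\e)$, which is bounded away from zero; this guarantees a finite $\theta_\e$ and a genuine contact point. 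A separate step (using that $f(x,y,\cdot)$ is non-increasing near $0$ outside a large ball, a consequence of~\eqref{BFZ}) shows these contact points stay in a fixed bounded set as $\e\to0$, and only then does one pass to the limit and use the strong comparison principle. Without this $\e$-regularisation your sliding argument does not get off the ground.

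Second, the non-existence part of~(ii). You argue: a positive stationary solution $(U,V)$ solves the linearised system with the strictly smaller potential $\t m:=f(\cdot,\cdot,V)/V<m$ at eigenvalue~$0$, and monotonicity of $\l$ in the zeroth-order coefficient then forces $\l(m)<0$. This does not follow. From the sup-over-supersolutions definition~\eqref{fgpe}, the fact that $(U,V)$ is a positive eigenpair with eigenvalue $0$ for the potential $\t m$ only yields $\l(\t m)\geq 0$; it does not yield $\l(\t m)=0$, because in unbounded domains a positive eigenfunction does not single out the generalized principal eigenvalue. Moreover, the monotonicity goes the wrong way for a contradiction: $\t m < m$ gives $\l(\t m)\geq\l(m)$, and since we assumed $\l(m)\geq0$ this is entirely consistent. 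To make your argument work you would need both a ground-state characterisation ($(U,V)$ decaying implies $\l(\t m)=0$) and a \emph{strict} monotonicity result, neither of which is available from Proposition~\ref{prop function} as stated (which is restricted to $c=0$, to the specific family $f^L$, and to the regime $\l\leq0$). The paper sidesteps all of this by using the generalized principal eigenfunction $(\phi,\psi)$ of~\eqref{lsmf} itself as a supersolution (valid because $\l\geq0$ together with KPP), and running the very same $\theta_\e$ sliding comparison between $(\phi,\psi)+\e(\nu/\mu,1)$ and $(U,V)$ to force $(\phi,\psi)\equiv\theta_0(U,V)$ and then $\theta_0=0$, a contradiction with positivity of $(\phi,\psi)$. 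I would encourage you to rewrite both steps along these lines.
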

A result analogous to Theorem \ref{persistence} holds true for the system without the road~\eqref{no road mf}, with $\l$ replaced by the corresponding generalized principal eigenvalue, see Proposition \ref{persistence no road} below. This is a consequence of the results of~\cite{BRforcedspeed}, owing to the equivalence of behaviors in \eqref{no road mf} and \eqref{OriginalCCmf}, that we explain in Section \ref{whole plane} below.

Questions \ref{question 1} and \ref{question 2} are then tantamount to understanding the relation between the generalized principal eigenvalues associated with models \eqref{mf} and \eqref{no road mf}, and to analyze their dependance with respect to the parameters.

To this end, it will sometimes be useful to quantify the ``size" of the favorable zone by considering terms $f$ given by
\begin{equation}\label{f l}
f^{L}(x,y,u) := \chi(\vert (x,y) \vert -L)u -u^{2}.
\end{equation}
Here, $L \in \R$ represents the scale of the favorable region and $\chi$ is a smooth, decreasing function satisfying
$$
\chi(r) \underset{r \to -\infty}{\longrightarrow} 1
\ \text{ and }\ 
\chi(r) \underset{r \to +\infty}{\longrightarrow} -1.
$$
The nonlinearity $f^{L}$ satisfies both the Fisher-KPP condition \eqref{KPPCC} and the ``bounded favorable zone" hypothesis \eqref{BFZ}. Consistently with our previous notations, we define
$$
m^L(x,y) := f_v^L(x,y,0) = \chi( \vert (x,y)\vert~-~L).
$$
In this case, the favorable zone is the ball of radius $L+\chi^{-1}(0)$ (intersected with the upper half-plane), which is empty for $L\leq -\chi^{-1}(0)$. The fact that the favorable zone is a half-ball does not play any role in the sequel, and one could envision more general conditions.

We will first consider the case where $c=0$, that is, when the niche is not moving (there is no climate change).
\begin{theorem}\label{th road lethal}
Assume that $c=0$.
\begin{enumerate}[(i)]

\item Whatever the values of the parameters~$D,\mu,\nu$ are, if extinction occurs for the system ``without the road" \eqref{no road mf}, then extinction also occurs for the system ``with the road" \eqref{mf}.

\item When $f=f^L$, there exist some values of the parameters $d, D,L, \mu, \nu$ for which persistence occurs for the system ``without the road" \eqref{no road mf} while extinction occurs for the system ``with the road" \eqref{mf}.

\end{enumerate}

\end{theorem}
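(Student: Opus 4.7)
The plan is to reduce both items to a comparison of generalized principal eigenvalues. Writing $\lambda_{1,N}$ for the generalized principal eigenvalue of the linearization of~\eqref{no road mf}, Theorem~\ref{persistence} and its counterpart for the no-road problem (Proposition~\ref{persistence no road}) transform the question into: for item~(i), proving $\lambda_1 \geq \lambda_{1,N}$ whenever $c=0$; for item~(ii), exhibiting parameters for which $\lambda_{1,N} < 0 < \lambda_1$.

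For part~(i) I would work at the level of the generalized principal eigenvalue characterization of~\cite{BDR1}. The key observation is that eliminating $\phi$ from the first equation of~\eqref{lsmf} shows that $\psi$ satisfies $-d\Delta\psi - m\psi = \lambda\psi$ together with the ``effective'' nonlocal boundary condition
\begin{equation*}
-d\partial_y\psi|_{y=0} + B_\lambda(\psi|_{y=0}) = 0,
\qquad
B_\lambda := \nu\bigl[I - \mu(-D\partial_{xx} + \mu - \lambda)^{-1}\bigr].
\end{equation*}
At $\lambda=0$ the operator $\mu(-D\partial_{xx}+\mu)^{-1}$ is convolution against a probability density, and a Fourier computation gives $\int g\, B_0(g)\, dx = \nu\int |\hat g(k)|^2 \frac{Dk^2}{Dk^2+\mu}\, \frac{dk}{2\pi} \geq 0$. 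Compared to the Neumann problem (in which $B_\lambda$ is absent), the road-field system therefore imposes a nonnegative ``loss'' at the boundary, and one can deduce $\lambda_1 \geq \lambda_{1,N}$ via a supersolution argument: a Neumann supersolution at $\lambda < \lambda_{1,N}$ can be coupled to a suitable $\phi$ so as to satisfy the road-field supersolution inequalities at the same~$\lambda$. The main obstacle will be to carry out this supersolution construction rigorously given that the road-field operator is not self-adjoint, so the effective formulation depends on the unknown eigenvalue~$\lambda$.

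For part~(ii), I would first fix $L$ such that
$$\lambda_{1,N}(L) < 0 < \lambda_{1,D}(L),$$
where $\lambda_{1,D}(L)$ denotes the generalized principal eigenvalue of $-d\Delta - m^L$ on $\R\times\R^+$ with Dirichlet boundary condition on $y=0$. Such $L$ exists because both eigenvalues are continuous and nonincreasing in $L$, tend to $+\infty$ as $L\to -\infty$ and to $-\infty$ as $L \to +\infty$, and the strict inequality $\lambda_{1,D} > \lambda_{1,N}$ always holds. Fixing such $L$ together with $d$ and $\mu$, I would let $D \to \infty$ first and $\nu \to \infty$ afterwards. In the first limit, any normalized eigenfunction $(\phi_D, \psi_D)$ satisfies $\phi_D'' \to 0$ and $\phi_D$ is bounded, hence $\phi_D$ tends to a constant~$C$; since $\psi_D|_{y=0}$ decays in $x$ (because the favorable zone is bounded) whereas the coupling condition forces $-d\partial_y\psi_D|_{y=0} \to \mu C$ at infinity, one must have $C = 0$, so $\mu \phi_D \to 0$ uniformly. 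The boundary condition then degenerates to the pure Robin one $-d\partial_y\psi|_{y=0} = -\nu \psi|_{y=0}$, and $\lambda_1(D,\mu,\nu) \to \lambda_{1,\mathrm{Rob}}(\nu)$. The classical Robin-to-Dirichlet limit then yields $\lambda_{1,\mathrm{Rob}}(\nu) \to \lambda_{1,D}(L) > 0$. Hence, choosing $\nu$ large enough that $\lambda_{1,\mathrm{Rob}}(\nu) > 0$ and then $D$ sufficiently large, we get $\lambda_1 > 0$, which by Theorem~\ref{persistence} yields extinction for~\eqref{mf}, while persistence for~\eqref{no road mf} follows from $\lambda_{1,N}(L) < 0$. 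The main obstacle is justifying the sequential limits for the generalized principal eigenvalue on an unbounded domain; this can be handled by exhausting bounded subdomains where Krein--Rutman applies, combined with the stability results of~\cite{BDR1}.
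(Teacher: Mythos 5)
Your stated goal — proving $\lambda_1 \geq \lambda_{1,N}$ whenever $c=0$ — is actually false, and the paper explicitly warns against this temptation in the remark following Proposition~\ref{road no help positivity}: taking $\psi=0$ in the variational formula shows $\lambda_1 \leq \mu$, whereas $\lambda_N$ is independent of $\mu$ and can be made arbitrarily large by choosing $f$ sufficiently negative. What is true, and what Theorem~\ref{th road lethal}(i) actually needs, is the weaker sign implication $\lambda_N \geq 0 \Rightarrow \lambda_1 \geq 0$. Your Fourier positivity computation is carried out at $\lambda=0$, and if you try to push the supersolution coupling at a general level $\lambda < \lambda_{1,N}$ you will find that the compatibility between the road equation and the exchange condition forces a restriction of the type $\lambda \leq 0$; you did not identify this restriction, so as written your argument either proves a false statement or fails at the coupling step. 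The paper sidesteps all of this with the self-adjoint variational formula of Proposition~\ref{variational}: the Rayleigh quotient for $\lambda_1^R$ contains the Neumann Rayleigh quotient as one summand, together with the manifestly nonnegative contributions $\mu\int_{I_R} D|\phi'|^2$ and $\int_{I_R}(\mu\phi - \nu\psi|_{y=0})^2$, so $\lambda_N^R\geq 0$ immediately gives $\lambda_1^R\geq 0$ and then $\lambda_1\geq 0$ by passing $R\to\infty$. This is both cleaner and matches exactly the strength of the statement to be proved.

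\textbf{Part (ii).} Your strategy relies on the claim that $\lambda_1(D,\mu,\nu) \to \lambda_{1,\mathrm{Rob}}(\nu)$ as $D\to\infty$. The paper discusses precisely this limit in Section~\ref{section Robin} and \emph{states it as an open conjecture}, not a theorem. So the key analytic step in your argument is not available, and the additional claim that the Robin eigenvalue converges to the Dirichlet one as $\nu\to\infty$ on these unbounded domains would also need to be established. The paper's own proof of (ii) (Proposition~\ref{prop road bad}) avoids any limiting argument of this kind: it defines $\ol L := \max\{L : \lambda_N(L)\geq 0\}$, then shows $\lambda_1(\ol L, D)>0$ for every $D>0$ by combining the strict monotonicity of $\lambda_1$ in $D$ (Proposition~\ref{prop function}) with the sign implication of Proposition~\ref{road no help positivity} — if $\lambda_1(\ol L, D)\leq 0$ then $\lambda_1(\ol L, D')<0$ for $D'<D$, contradicting $\lambda_N(\ol L)=0\Rightarrow\lambda_1\geq 0$ — and finally perturbs $L$ slightly above $\ol L$ using continuity. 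This is elementary and requires no asymptotics in $D$ or $\nu$. You should replace the limiting argument with a continuity/monotonicity argument of this type, or else first prove the conjectured limit.
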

Theorem \ref{th road lethal} answers Question \ref{question 1}. Indeed, statement $(i)$ means that the presence of the road can never entail the persistence of a population which would be doomed to extinction without the road. In other words, the road never improves the chances of survival of a population living in an ecological niche. Observe that this result was not obvious a priori: first, there is no death term on the road, so the road is not a lethal environment. Second, if  the favorable niche were made of, say, two connected components, one might have thought that a road ``connecting" them might have improved the chances of persistence. Statement $(i)$ shows that this intuition is not correct.

Statement $(ii)$ asserts that the road can actually make things worse: there are situations where the population would persist in an ecological niche, but the introduction of a road drives it to extinction. This is due to an effect of ``leakage''of the population due to the road.

In the context of this result, we can discuss the roles 
of the diffusion parameters $d$ and $D$, that represent the amplitudes of the random motion of 
individuals in the field and on the road.

\begin{theorem}\label{d infinity}
Consider the system ``with the road" \eqref{mf}, with $c=0$. Then, there exists $d^{\star}\geq0$ depending on $D,\mu, \nu$ such that persistence occurs if and only if $0 <d<d^{\star}$. In particular, extinction occurs for every $d>0$ when $d^\star=0$. 
\end{theorem}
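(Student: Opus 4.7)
The strategy is to characterize persistence via the sign of the generalized principal eigenvalue $\lambda_1=\lambda_1(d)$ of~\eqref{lsmf} (with $c=0$), viewed as a function of $d$ with $D,\mu,\nu$ held fixed. By Theorem~\ref{persistence}, persistence for~\eqref{mf} is equivalent to $\lambda_1(d)<0$. Accordingly, setting $d^*:=\sup\{d>0 : \lambda_1(d)<0\}$ (with the convention $d^*:=0$ if this set is empty), the claimed dichotomy will follow once we show that (a) $d\mapsto\lambda_1(d)$ is nondecreasing, (b) $d\mapsto\lambda_1(d)$ is continuous on $(0,+\infty)$, and (c) $d^*<+\infty$.

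The central step is (a), and here I would establish a variational characterization of $\lambda_1(d)$. Multiplying the first equation of~\eqref{lsmf} by $\mu\phi$, the second by $\nu\psi$, integrating over $\R$ and $\R\times\R^+$ respectively, and using the exchange boundary condition on $\{y=0\}$ to combine the cross boundary terms with $\mu^2\!\int\phi^2$ and $\nu^2\!\int\psi|_{y=0}^2$ into the perfect square $\int_\R(\mu\phi-\nu\psi|_{y=0})^2 dx$, one obtains, at least formally,
\[
\lambda_1(d) \;=\; \inf_{(\phi,\psi)\neq 0}\;\frac{\mu D\!\int_\R(\phi')^2 + \nu d\!\int_{\R\times\R^+}\!|\nabla\psi|^2 - \nu\!\int m\psi^2 + \int_\R(\mu\phi-\nu\psi|_{y=0})^2}{\mu\!\int\phi^2 + \nu\!\int\psi^2}.
\]
Since the numerator is manifestly nondecreasing in $d$ for every admissible test pair $(\phi,\psi)$, the same holds for $\lambda_1(d)$, proving (a). Step (b) would follow from the stability of the generalized principal eigenvalue under coefficient perturbations developed in~\cite{BDR1}.

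For (c), I would invoke Theorem~\ref{th road lethal}(i): for the ``without-road'' system~\eqref{no road mf} with $c=0$, standard arguments on Schr\"odinger-type operators with bounded potentials (which transfer to the generalized principal eigenvalue framework since the favorable region is bounded by~\eqref{BFZ}) show that the associated eigenvalue tends to $+\infty$ as $d\to+\infty$. Thus extinction occurs for~\eqref{no road mf} for all sufficiently large $d$, and Theorem~\ref{th road lethal}(i) propagates this to~\eqref{mf}, yielding $d^*<+\infty$. Combining (a)--(c), Theorem~\ref{persistence} delivers persistence iff $d\in(0,d^*)$ (for $d=d^*$, continuity gives $\lambda_1(d^*)\geq 0$, hence extinction).

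The principal obstacle lies in rigorously justifying the variational formula on the unbounded domain $\R\times\R^+$, where the operator in~\eqref{lsmf} has no compact resolvent and Krein--Rutman is not directly applicable. The plan is to approximate $\lambda_1(d)$ by principal eigenvalues of the problem posed on an exhausting family of bounded truncations (e.g.\ half-balls of growing radius, with suitable boundary conditions on the outer truncation boundary), where a Rayleigh quotient legitimately characterizes the principal eigenvalue by Krein--Rutman, and then pass to the limit using the monotone stability properties of the generalized principal eigenvalue proved in~\cite{BDR1}. A further subtlety is that~\eqref{lsmf} is not formally self-adjoint unless $\mu=\nu$; the choice of the asymmetric weights $(\mu,\nu)$ in the computation above is precisely what symmetrizes the associated energy and renders the variational approach viable.
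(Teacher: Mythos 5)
Your overall architecture (monotonicity of $d\mapsto\lambda_1(d)$ via the variational formula, continuity from \cite{BDR1}, a threshold $d^\star$, and Theorem~\ref{persistence}) is exactly the paper's. Where you depart is step~(c), the key finiteness $d^\star<+\infty$. The paper proves this directly for $\lambda_1$ by establishing Proposition~\ref{est d grand}, which constructs an explicit supersolution pair $(\Phi,\Psi)$ to show $\liminf_{d\to+\infty}\lambda_1(d)\geq\min\{\mu,K\}>0$ with $K:=-\limsup_{|(x,y)|\to\infty}m$. You instead propose to prove the analogous bound for $\lambda_N$ (the ``without-road'' eigenvalue) and then transfer it to $\lambda_1$ via Proposition~\ref{road no help positivity} (the eigenvalue form of Theorem~\ref{th road lethal}(i)). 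That route is legitimate and has a genuine advantage: the supersolution to be constructed is a single scalar $\psi$ rather than a coupled pair, and the boundary coupling disappears. The transfer step $\lambda_N\geq0\Rightarrow\lambda_1\geq0$ does the rest.

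However, there are two problems with the way you execute step~(c). First, your claim that ``the associated eigenvalue tends to $+\infty$ as $d\to+\infty$'' is false. Taking in the Rayleigh quotient \eqref{eqvariational no road} a test function supported on a half-ball $B_r(x_0)\cap(\R\times\R^+)$ with $r\to\infty$ gives
$\lambda_N(d)\leq d\,\lambda_{\mathrm{Dir}}(B_r)+\sup(-m)\to\sup(-m)$,
so $\lambda_N(d)$ is bounded above by $\sup(-m)<+\infty$ uniformly in $d$. What is true (and is all you need) is the finite bound $\liminf_{d\to+\infty}\lambda_N(d)\geq K>0$. Second, calling this ``standard arguments on Schr\"odinger-type operators'' papers over the actual difficulty: the domain is unbounded, there is no compactness, the lower bound cannot be read off a Rayleigh quotient (test functions only give upper bounds), and the correct tool is a supersolution construction in the spirit of formula \eqref{fgpe no road}. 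This is precisely the nontrivial content of Proposition~\ref{est d grand}, and its scalar analogue would still need to be carried out (the needed statement is not contained in Proposition~\ref{rappel CC}, which concerns the limit in $c$, not in $d$). So the plan is sound and slightly more economical than the paper's, but the decisive estimate must still be proved; as written, it is asserted, not established, and under an incorrect form.
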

This result is analogous to the one discussed for the model without road in the one dimensional case in \cite{PL, BDNZ}. The interpretation is that the larger $d$, the farther the population will scatter away from the favorable zone, with a negative effect for persistence. Observe that when $d^\star =0$, then persistence never occurs (the set $(0,d^{\star})$ is empty). This is the case if there is no favorable niche at all. However, $d^\star>0$ as soon as $f>0$ somewhere. It is natural to wonder if a result analogous to Theorem~\ref{d infinity} holds true when, instead of the diffusion $d$ in the field, one varies the diffusion on the road $D$, keeping $d>0$ fixed. We show that this is not the case because there are situations where persistence occurs for all values of $D>0$, see Proposition \ref{D no influence} below.

Next, we turn to Question \ref{question 2}, that is, we consider the case $c > 0$ corresponding to a moving  niche caused e.g. by a climate change. We start with analyzing the influence of~$c$ on the 
survival of the species for the system ``with the road"~\eqref{mf}.
Owing to Theorem~\ref{persistence}, this amounts to studying the generalized principal eigenvalue $\lambda_1$ as a function of~$c$.
\begin{theorem}\label{critical}
There exist $0\leq c_{\star} \leq c^{\star}\leq2\sqrt{\max\{d,D\}  [\sup m]^+}$, 
such that the following holds for the system \eqref{mf}:
% $0\leq c_{\star} \leq c^{\star}\leq2\sqrt{\max\{d,D\} \| f_{v}\|_{L^{\infty}(\R^N)}}$
\begin{enumerate}[(i)]

\item Persistence occurs if $0 \leq c < c_{\star}$.

\item Extinction occurs if $ c \geq c^{\star}$.

\end{enumerate}
Moreover, if persistence occurs for $c=0$ then $c_{\star}>0$.
\end{theorem}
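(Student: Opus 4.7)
By Theorem~\ref{persistence}, the persistence/extinction dichotomy for \eqref{mf} is completely determined by the sign of the generalized principal eigenvalue $\lambda_{1}=\lambda_{1}(c)$ of the linearization \eqref{lsmf}, regarded here as a function of the forced shift speed $c\geq 0$. Accordingly I would set
\begin{equation*}
c^{\star}:=\inf\bigl\{c_{0}\geq 0 : \lambda_{1}(c)\geq 0 \text{ for all } c\geq c_{0}\bigr\},\qquad
c_{\star}:=\sup\bigl\{c_{0}\geq 0 : \lambda_{1}(c)<0 \text{ for all } c\in[0,c_{0})\bigr\},
\end{equation*}
with the convention $\sup\emptyset=0$. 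Items (i)--(ii) then reduce to Theorem~\ref{persistence} applied pointwise, so the substance of the proof lies in bounding $c^{\star}$ from above and in showing $c_{\star}>0$ whenever $\lambda_{1}(0)<0$.

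\emph{Step~1: upper bound for $c^{\star}$.} Write $D^{\star}:=\max\{d,D\}$ and $\bar m:=[\sup m]^{+}$. For any $c\geq 2\sqrt{D^{\star}\bar m}$ I would exhibit a positive supersolution of \eqref{lsmf} with $\lambda=0$ and with $m$ replaced by its majorant $\bar m$, in the simple form
\[
(\phi(x),\psi(x,y))=\bigl(A,\,\tfrac{\mu}{\nu}A\bigr)\,e^{-\alpha x},\qquad \alpha:=\frac{c}{2D^{\star}},\quad A>0.
\]
Since $\psi$ does not depend on $y$, the transmission condition $-d\partial_{y}\psi|_{y=0}\geq\mu\phi-\nu\psi|_{y=0}$ becomes the equality $\nu(\mu/\nu)A=\mu A$. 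The road inequality reduces to $(c\alpha-D\alpha^{2})A\geq 0$, which holds because $\alpha\leq c/D$ (using $2D^{\star}\geq D$). The field inequality requires $c\alpha-d\alpha^{2}\geq\bar m$; with the chosen $\alpha$, this equals $c^{2}(2D^{\star}-d)/(4D^{\star 2})$, and since $2D^{\star}-d\geq D^{\star}$ (using $D^{\star}\geq d$), the hypothesis $c^{2}\geq 4D^{\star}\bar m$ suffices. Invoking the supersolution characterization of the generalized principal eigenvalue for road--field operators proved in \cite{BDR1}, I conclude that $\lambda_{1}(c)\geq 0$ for every $c\geq 2\sqrt{D^{\star}\bar m}$, yielding the announced bound on $c^{\star}$. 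The inequality $c_{\star}\leq c^{\star}$ then follows from the two definitions together with the continuity of $c\mapsto\lambda_{1}(c)$ recalled below.

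\emph{Step~2: positivity of $c_{\star}$.} If persistence occurs at $c=0$ then $\lambda_{1}(0)<0$ by Theorem~\ref{persistence}. I would invoke the continuity of $c\mapsto\lambda_{1}(c)$, a general property of the generalized principal eigenvalue for road--field systems established in \cite{BDR1}, to deduce $\lambda_{1}(c)<0$ throughout a right neighbourhood $[0,\varepsilon)$ of the origin, whence $c_{\star}\geq\varepsilon>0$.

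\emph{Main obstacle.} The delicate point is the choice of the exponent $\alpha$ in Step~1. The KPP exponent $\alpha=c/(2d)$ that is optimal for the field equation alone violates the constraint $\alpha\leq c/D$ as soon as $D>2d$, in which case the road inequality becomes incompatible with the transmission bound $\nu B\geq\mu A$. The balanced choice $\alpha=c/(2D^{\star})$ reconciles the two at the admissible cost of a suboptimal field threshold $c^{2}(2D^{\star}-d)/(4D^{\star 2})$ instead of the KPP value $c^{2}/(4d)$, and it produces precisely the bound $2\sqrt{D^{\star}\bar m}$ stated in the theorem. A secondary subtlety is that $c\mapsto\lambda_{1}(c)$ is not claimed to be monotone in this road--field setting; this is exactly why the statement allows $c_{\star}<c^{\star}$, in contrast with the no-road forced-speed result of Proposition~\ref{rappel CC}.
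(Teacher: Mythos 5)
Your argument is correct and reaches the same conclusion, but by a genuinely different route from the one in the paper. To obtain the threshold $2\sqrt{\max\{d,D\}\,[\sup m]^+}$, the paper does not build supersolutions: it multiplies the bounded-domain eigenproblem \eqref{eigborn} by the weight $e^{\kappa x}$, integrates by parts, and optimizes over $\kappa$ to prove the quantitative lower bound $\lambda_1(c)\geq \tfrac14\min\{1/d,1/D\}\,c^2 - [\sup m]^+$ (Proposition~\ref{EstEig}), from which the threshold follows upon setting the right-hand side to zero. You instead exhibit the explicit pair $(\phi,\psi)=(A,\tfrac{\mu}{\nu}A)e^{-\alpha x}$ with the balanced exponent $\alpha = c/(2\max\{d,D\})$ and feed it into the definition \eqref{fgpe} with $\lambda=0$; your choice of $\alpha$ is exactly the nontrivial point, since the field-optimal exponent $c/(2d)$ would violate the road constraint $\alpha\leq c/D$ whenever $D>2d$, as you correctly note. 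Your check of the three inequalities is sound, and since $\phi,\psi$ need only lie in $W^{2,p}_{loc}$ the unboundedness of $e^{-\alpha x}$ is harmless. The two approaches are equivalent in outcome here, and indeed your supersolution also yields the paper's full quantitative bound if you carry the parameter $\lambda$ along rather than specializing to $\lambda=0$; the integral method used in the paper is more robust (it reuses machinery from the $R\to\infty$ limit and from Proposition~\ref{minzero}), while yours is shorter and more self-contained. Your definitions of $c_\star$ and $c^\star$ differ cosmetically from the paper's ($c_\star=\min\{c\geq0:\lambda_1(c)\geq0\}$, $c^\star=\sup\{c\geq0:\lambda_1(c)<0\}$) but agree with them by continuity of $c\mapsto\lambda_1(c)$, and Step~2 on the positivity of $c_\star$ is identical to the paper's argument. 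One small point you glossed over: extinction at $c=c^\star$ itself requires $\lambda_1(c^\star)\geq0$, which needs the continuity of $\lambda_1$ once more; it is worth saying explicitly.
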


The quantities $c_{\star}$ and $c^{\star}$ are called the
\emph{lower} and \emph{upper critical speeds} respectively for~\eqref{mf}.
Theorem \ref{critical} has a natural interpretation: on the one hand, if $c$  is large, the population cannot keep pace with the moving favorable zone, and extinction occurs. On the other hand, if persistence occurs in the absence of climate change, it will also be the case with a climate change with moderate speed.

We do not know if the lower and upper critical speeds actually always coincide, that is, if $c_{\star} = c^{\star}$. We prove that this is the case when $d=D$, but we leave the general case as an open question.

Finally, we investigate the consequences of the presence of a road for a population facing a climate change. To this end, we focus on the case where $f=f^L$, given by~\eqref{f l}. Observe that formally, as $L$ goes to $+\infty$, \eqref{mf} reduces to the system~\eqref{OriginalFR} considered in \cite{BRR1}, in the same moving frame. This suggests that the critical speeds for \eqref{mf} should converge to the spreading speed $c_H$ of Proposition \ref{rappel FR}.
The next result makes this intuition rigorous.

%
% by proving some estimates on $c_\star$. The interest of the next theorem is that it allows us to answer Question \ref{question 2}, namely, there are cases where the presence of a road can help a population to survive.

\begin{theorem}\label{effect road}
Assume that $f = f^{L}$ in \eqref{mf}. Then, the lower and upper critical speed $c_\star, c^\star$ satisfy:
$$
c_{\star}, c^{\star} \nearrow c_H \quad \text{ as }\ L \nearrow +\infty,
$$
%$$
%0 \leq c_H - \e(L) \leq c_{\star} \leq  c^{\star} \leq 2\sqrt{\max\{d,D\}},
%$$
where $c_H$ is given by Proposition \ref{rappel FR}.
\end{theorem}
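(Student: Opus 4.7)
The plan is a two-scale limit combining monotonicity in $L$ with approximation of the generalized principal eigenvalue by Dirichlet eigenvalues on large bounded subdomains. Since $\chi$ is decreasing, $m^L(x,y)=\chi(\vert (x,y)\vert-L)$ is pointwise non-decreasing in $L$, so by monotonicity of the generalized principal eigenvalue with respect to its zero-order term (a property of the road--field notion developed in \cite{BDR1}), $\l(c,L)$ is non-increasing in $L$ for each fixed $c\geq 0$. Theorems~\ref{persistence} and~\ref{critical} then yield that $c_\star(L)$ and $c^\star(L)$ are both non-decreasing in $L$, hence both converge as $L\to+\infty$. It remains to identify both limits with $c_H$.

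For the upper bound, observe that $m^L\leq 1$ uniformly, since $\chi\leq 1$. From \cite{BRR1, BRR3}, the spreading speed $c_H$ of the homogeneous road--field system~\eqref{OriginalFR} with $f'(0)=1$ admits the eigenvalue characterization $\l^{\mathrm{hom}}(c)>0 \Leftrightarrow c>c_H$, where $\l^{\mathrm{hom}}(c)$ is the generalized principal eigenvalue of the corresponding linearization in the moving frame. By monotonicity in the zero-order term, $\l(c,L)\geq\l^{\mathrm{hom}}(c)>0$ for every $c>c_H$ and every $L\in\R$, so Theorem~\ref{persistence} yields extinction and hence $c^\star(L)\leq c_H$. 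Thus the limit of $c^\star(L)$ is $\leq c_H$.

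For the lower bound, fix $c<c_H$, so that $\l^{\mathrm{hom}}(c)<0$, and set $B_R^+:=\{(x,y)\in\R\times\R^+:x^2+y^2<R^2\}$. Let $\l_R^{\mathrm{hom}}(c)$ and $\l_R(c,L)$ denote the Dirichlet principal eigenvalues on $B_R^+$ of the homogeneous and heterogeneous linearized road--field operators (Dirichlet on the curved part of $\partial B_R^+$, with the road--field exchange retained on $\{y=0\}\cap\overline{B_R^+}$). The approximation result of \cite{BDR1} gives $\l_R^{\mathrm{hom}}(c)\to\l^{\mathrm{hom}}(c)$ as $R\to+\infty$; pick $R$ with $\l_R^{\mathrm{hom}}(c)<0$. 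Since $\chi(r)\to 1$ as $r\to -\infty$, for $L$ large enough the uniform bound $m^L\geq\chi(R-L)\to 1$ holds on $\overline{B_R^+}$, so by continuity of the Dirichlet principal eigenvalue with respect to the zero-order coefficient, $\l_R(c,L)\to\l_R^{\mathrm{hom}}(c)<0$, hence $\l_R(c,L)<0$ for $L$ large. A small multiple of the associated positive Dirichlet eigenfunction, extended by zero outside $B_R^+$, is then a compactly supported stationary sub-solution of the nonlinear road--field system (using the KPP hypothesis~\eqref{KPPCC}), which prevents any solution with non-trivial initial datum from vanishing and therefore forces persistence; equivalently, $\l(c,L)\leq\l_R(c,L)<0$. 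By Theorem~\ref{persistence}, $c<c_\star(L)$ for $L$ large, and letting $L\to+\infty$ and then $c\nearrow c_H$ gives the matching lower bound. Combined with the monotonicity of Step~1, this yields $c_\star(L),c^\star(L)\nearrow c_H$ as $L\nearrow+\infty$.

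The main obstacle is the approximation of the generalized principal eigenvalue by Dirichlet eigenvalues on growing bounded subdomains in the hybrid road--field setting. This is non-standard because of the exchange condition on $\{y=0\}$ and the resulting mixed geometry of the operator, but it is precisely the kind of spectral convergence developed in the companion paper~\cite{BDR1}; once this is taken for granted, the argument reduces to the clean two-scale limit above (fix $R$ large to almost reach $\l^{\mathrm{hom}}(c)$, then send $L\to+\infty$ to make $m^L$ essentially equal to $1$ on $B_R^+$).
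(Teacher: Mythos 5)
Your proof follows the same overall strategy as the paper: use the pointwise bound $m^L\leq 1$ to get $\lambda_1(c,L)\geq\lambda_H(c)$, hence the upper bound $c^\star(L)\leq c_H$; and on the other side, approximate by principal eigenvalues on truncated domains, use $m^L\to 1$ locally uniformly to show $\lambda_1^R(c,L)\to\lambda_H^R(c)$, then combine with $\lambda_1\leq\lambda_1^R$ and $\lambda_H^R\to\lambda_H$ to conclude that $\lambda_1(c,L)<0$ for $L$ large whenever $\lambda_H(c)<0$. This is precisely the two-scale argument of the paper's Proposition \ref{prop f l} and its deployment in the proof of Theorem \ref{effect road}. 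Your preliminary monotonicity-in-$L$ step (from the monotonicity of the generalized principal eigenvalue in its zero-order coefficient, valid for all $c$ directly from \eqref{fgpe}) is a harmless supplement.

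The one substantive gap is your treatment of the eigenvalue characterization of $c_H$. You assert that the equivalence $\lambda^{\mathrm{hom}}(c)>0\Leftrightarrow c>c_H$ is available ``from \cite{BRR1, BRR3}.'' This is not accurate: as the paper itself points out just before its Lemma \ref{prop comparaison fr}, the results of \cite{BRR1} (restated as Proposition \ref{rappel FR}) are obtained via explicit sub- and super-solutions and do \emph{not} use principal eigenvalues, so the characterization has to be \emph{derived} from them. That derivation is exactly Lemma \ref{prop comparaison fr}, whose proof is genuinely non-trivial: one compares a suitably normalized generalized principal eigenfunction, regarded as a stationary supersolution, against the known locally-uniform spreading to $(\nu/\mu,1)$ of \cite{BRR1} to rule out $\lambda_H(c)\geq 0$ for $c<c_H$, treats $c>c_H$ via the persistence/extinction dichotomy, and uses continuity of $c\mapsto\lambda_H(c)$ to conclude $\lambda_H(c_H)=0$. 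Your proof uses this lemma on both sides of the argument but does not supply it, nor is it literally present in the cited references. A minor secondary point: only $\lambda_H(c)\geq 0$ for $c\geq c_H$ is established, not your claimed strict positivity $\lambda^{\mathrm{hom}}(c)>0$ for $c>c_H$; this overclaim is harmless since nonnegativity already yields $c^\star(L)\leq c_H$ from the definition of $c^\star$.
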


The above theorem has the following important consequence.

\begin{cor}\label{cor effect road}

Assume that $D>2d$. There are $L>0$ and $0<c_1<c_2$ such that
\begin{enumerate}[(i)]

\item If $c\in [0, c_1)$, persistence occurs for the model ``with the road" \eqref{mf} as well as for the model ``without the road" \eqref{no road mf}.

\item If $c\in [c_1,c_2)$, persistence occurs for the model ``with the road" \eqref{mf} whereas extinction occurs for the model ``without the road" \eqref{no road mf}.

\end{enumerate}

\end{cor}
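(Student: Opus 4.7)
The plan is to apply Theorem~\ref{effect road} together with its counterpart for the no-road model, then exploit the hypothesis $D>2d$ via Proposition~\ref{rappel FR}. Throughout, $f=f^L$. Let $c_\star(L)\le c^\star(L)$ denote the lower and upper critical speeds of~\eqref{mf} supplied by Theorem~\ref{critical}, and let $c_N(L)$ denote the unique critical speed for~\eqref{no road mf}, obtained from the no-road analog of Theorem~\ref{persistence} (Proposition~\ref{persistence no road}) together with the reflection identification with~\eqref{OriginalCCmf} and Proposition~\ref{rappel CC}. The argument rests on three inputs: Theorem~\ref{effect road} gives $c_\star(L)\nearrow c_H$ as $L\to+\infty$; the no-road counterpart of that theorem gives $c_N(L)\nearrow c_{KPP}=2\sqrt{d}$ as $L\to+\infty$; and Proposition~\ref{rappel FR} tells us that the assumption $D>2d$ is \emph{precisely} what ensures $c_H>c_{KPP}$.

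Granted these three facts, the construction is immediate. I fix $\eta\in(0,(c_H-c_{KPP})/2)$, pick $L$ so large that $c_N(L)<c_{KPP}+\eta$ and $c_\star(L)>c_H-\eta$, and set $c_1:=c_N(L)$ and $c_2:=c_\star(L)$. Then $0<c_1<c_2$ because $c_{KPP}>0$ and $c_{KPP}+\eta<c_H-\eta$. For $c\in[0,c_1)$, persistence for~\eqref{no road mf} comes from $c<c_N(L)$ via the no-road analog of Theorem~\ref{critical}, and persistence for~\eqref{mf} from $c<c_\star(L)$ via Theorem~\ref{critical}(i). For $c\in[c_1,c_2)$, extinction for~\eqref{no road mf} comes from $c\ge c_N(L)$, whereas persistence for~\eqref{mf} still holds since $c<c_\star(L)$.

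The main obstacle is the limit $c_N(L)\nearrow c_{KPP}$, which is not explicitly stated in the excerpt. I expect it to follow the same blueprint as Theorem~\ref{effect road}: characterize persistence/extinction for~\eqref{no road mf} by the sign of the corresponding generalized principal eigenvalue; use monotonicity of that eigenvalue in $L$ (since $m^L$ is pointwise nondecreasing in $L$) to obtain monotonicity of $c_N(L)$; bound $c_N(L)\le c_{KPP}$ by comparison with the homogeneous full-plane problem, whose critical speed is $c_{KPP}$ (using the $y$-reflection and the classical KPP speed); and derive a matching lower bound by testing the eigenvalue against compactly supported approximations of the principal eigenfunction of $-d\Delta-c\partial_x-1$ on $\R^2$. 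Once the two limits $c_N(L)\nearrow c_{KPP}<c_H\longleftarrow c_\star(L)$ are in hand, everything else is the routine bookkeeping carried out in the second paragraph.
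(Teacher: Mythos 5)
Your proof is essentially correct and follows the same strategy as the paper: use Theorem~\ref{effect road} to push $c_\star(L)$ above $c_{KPP}$ (here $D>2d$ is used, via Proposition~\ref{rappel FR}, to get $c_H>c_{KPP}$), bound the no-road critical speed by $c_{KPP}$, and then take $c_1=c_N$, $c_2=c_\star$. The difference lies in the ``obstacle'' you flag: you invoke a full no-road counterpart of Theorem~\ref{effect road}, namely $c_N(L)\nearrow c_{KPP}$, and sketch how to prove both the upper and lower bounds. This is more than you need. Only the one-sided inequality $c_N(L)\le c_{KPP}$ (for every $L$, in fact) enters the argument, and the paper gets it in one line: since $m^L\le 1$, taking $\psi(x,y)=e^{-cx/(2d)}$ in the defining formula~\eqref{fgpe no road} gives $\lambda_N(c)\ge \tfrac{c^2}{4d}-1$, so $\lambda_N(c)\ge 0$ for $c\ge c_{KPP}=2\sqrt d$, hence $c_N\le c_{KPP}$ by Propositions~\ref{rappel CC} and~\ref{persistence no road}. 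Your monotonicity-plus-comparison route for the upper bound is sound but indirect, and the lower-bound part of your sketch (matching $c_N(L)$ to $c_{KPP}$ from below) is superfluous for the corollary. Apart from this over-engineering, the bookkeeping and the choice of $c_1,c_2$ agree with the paper.
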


This result answers Question $2$. Indeed, it means that, in some cases, the road can help the population to survive faster climate change than it would if there were no road. The threshold $D>2d$ in the theorem is the same threshold derived in \cite{BRR1} for the road to induce an enhancement of the asymptotic speed of spreading.
\\

The paper is organized as follows. In Section \ref{recall}, we recall some results from~\cite{BDR1}, concerning the generalized principal eigenvalue for system \eqref{lsmf}. We explain in Section \ref{whole plane} why the systems on the half plane are equivalent to the systems on the whole plane. We prove Theorem~\ref{persistence} in Section \ref{proof persistence}.
In Section \ref{section niche}, we study the effect of a road on an ecological niche, i.e., we consider~\eqref{mf} with $c=0$. We prove Theorems~\ref{th road lethal} and  \ref{d infinity} in Sections \ref{road lethal} and \ref{diffusions} respectively.
Section \ref{section road} deals with the effect of a road on a population facing climate change, i.e., system \eqref{mf} with $c >0$. We prove Theorem~\ref{critical} in Section \ref{section c} and Theorem~\ref{effect road} in Section \ref{section road helps}. Section \ref{unbounded niche} contains a brief discussion of the case where the favorable niche is not bounded, that is, when \eqref{BFZ} does not hold. This situation is still by and large open.
%%% 

\section{The generalized principal eigenvalue and the long-time behavior} \label{recall}

\subsection{Definition and properties of the generalized principal eigenvalue}\label{sec rapp vp}

In this section, we recall some technical results from \cite{BDR1} concerning $\l$, the generalized principal eigenvalue for system \eqref{lsmf}. To simplify notations, we define the following linear operators:
\begin{equation}\label{notation}
\left\{
\begin{array}{lllrl}
\mc L_{1}(\phi,\psi) &:= D\partial_{xx}\phi+c\partial_{x}\phi +\nu \psi\vert_{y=0}-\mu \phi, \\
\mc L_{2}(\psi) &:= d\Delta \psi+c\partial_{x}\psi+m(x,y)\psi , \\
\mc B(\phi,\psi) &:= d\partial_{y}\psi\vert_{y=0}+\mu \phi-\nu \psi\vert_{y=0} .
\end{array}
\right.
\end{equation}
These operators are understood to act on functions $(\phi,\psi) \in W^{2,p}_{loc}(\R)\times W^{2,p}_{loc}(\R \times [ 0, +\infty)) $. 
We restrict to $p>2$, in order to have the imbedding in $C^{1}_{loc}(\R)\times C^{1}_{loc}(\R \times [ 0, +\infty))$.

The generalized principal eigenvalue of \eqref{lsmf} is defined by
\begin{equation}\label{fgpe}
\begin{array}{rl}
\lambda_{1}:=\sup\Big\{ \lambda \in \mathbb{R} \, : \, \exists (\phi,\psi) \geq 0, &(\phi,\psi)\not\equiv (0,0) \, \text{such that} \,
\mc L_{1}(\phi,\psi)+\lambda \phi \leq 0 \ \text{ on }\, \R ,\\  &\mc L_{2}(\psi)+\lambda \psi  \leq 0  \ \text{ on }\ \R\times\R^+,\  \mc B(\phi,\psi) \leq 0\, \text{ on }\, \R \Big\}.
\end{array}
\end{equation}
%\begin{equation}\label{fgpe}
%\begin{array}{ll}
%\lambda_{1}:=\sup\Big\{ \lambda \in \mathbb{R} \ : \ &\exists (u,v) \geq 0,\ (u,v)\not\equiv (0,0) \ \text{ such that } \ \\ 
%&L_{1}(u,v)+\lambda u \leq 0 , \ L_{2}(v)+\lambda v  \leq 0 ,\  B(u,v) \leq 0 \Big\}.
%\end{array}
%\end{equation}
Above and in the sequel, unless otherwise stated, the differential equalities and inequalities are understood to hold almost everywhere.
%
%
%The functions $(u,v)$ in the above and subsequent definitions always belong to $W^{2,p}_{loc}(\R)\times W^{2,p}_{loc}(\R \times [0,+\infty))$, for some $p>2$. 
%Throughout the paper, unless otherwise specified, differential equalities and inequalities are understood to hold almost everywhere.
%

Owing to Theorem \ref{persistence}, proved below in this section, the sign of $\l$ completely characterizes the long-time behavior for system \eqref{mf}. Therefore, to answer Questions \ref{question 1} and \ref{question 2}, and the other questions addressed in this paper, we will study the dependence of $\l$ with respect to the coefficients $d,D,c$ as well as with respect to the parameter $L$ in \eqref{f l}. The formula \eqref{fgpe} is not always easy to handle, but there are two other characterizations of $\l$ which turn out to be handy. First, $\l$ is the limit of principal eigenvalues of the same problem restricted to bounded domains that converge to the half-plane. More precisely, calling $B_R$ the (open) ball of radius $R$ and of center $(0,0)$ in $\R^2$, we consider the increasing sequences of (non-smooth) domains $(\O_R)_{R>0}$ and $(I_R)_{R>0}$ given by
$$
\O_R := B_R \cap (\R \times \R^+) \quad \text{and} \quad I_R = (-R,R).
$$
%Observe that $\cup_{R>0} \Omega_{R} = \mathbb{R} \times \mathbb{R}^{+}$, i.e., the bounded domains we consider ``fill" the whole field. 
%
%
%
We introduce the following eigenproblem: 
\begin{equation}\label{eigborn}
\left\{
\begin{array}{rll}
-\mc L_{1}(\phi,\psi) &= \lambda \phi \  &\text{ in } I_R, \\
-\mc L_{2}(\psi) &= \lambda \psi \ &\text{ in }\  \Omega_{R}, \\
\mc B(\phi,\psi) &= 0 \ &\text{ in }  \ I_R, \\
\psi &= 0 \ &\text{on} \   (\partial \Omega_{R})\backslash (I_R\times\{0\}), \\
\phi(-R)=\phi(R)&=0 .
\end{array}
\right.
\end{equation}
%
%a{pourquoi pas $\mc{L}_i$, $\mc{B}$, ou bien changer $f^L$ ?}{Fait}
Here, the unknowns are $\lambda \in \R$, $\phi \in W^{2,p}(I_R)$ and $\psi \in W^{2,p}(\O_R)$. The existence of a principal eigenvalue and its connection with the generalized principal eigenvalue are given by the next result.
%As we already mentioned, these Sobolev spaces are imbedded in $C^1$ and then the boundary conditions in \eqref{eigborn} can be understood in the classical sense. Again, the equalities in \eqref{eigborn} are assumed to hold almost everywhere. However, we mention that, if it exists, any solution $(u,v)$ of \eqref{eigborn} is actually $C^2$ in the interior of its domain of definition, thanks to the Schauder regularity estimates.

\begin{prop}[{\cite[Theorem 2.2]{BDR1}}]\label{limit}
For $R>0$, there is a unique $\lambda_R \in \R$ and a unique (up to multiplication by a positive scalar) positive pair $(\phi_R,\psi_R) \in W^{2,p}(I_R)\times W^{2,p}(\O_R)$ that satisfy \eqref{eigborn}. 

Moreover, the following limit holds true
$$
\lambda_{1}^{R} \underset{R \to +\infty}{\searrow} \lambda_{1}.
$$

Finally, there is a generalized principal eigenfunction associated with $\l$, that is, a pair $(\phi,\psi)~\in~W^{2,p}_{loc}(\R)\times W^{2,p}_{loc}(\R \times [ 0, +\infty))$, $(\phi,\psi) \geq 0$, $(\phi,\psi)\not\equiv (0,0)$ satisfying $\mc L_1(\phi,\psi) = \l \phi $, $\mc L_2(\psi) = \l \psi$ and $\mc B(\phi,\psi) =0$.
\end{prop}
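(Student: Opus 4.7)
The plan is to establish the three claims in order: existence and uniqueness of the principal eigenpair of \eqref{eigborn} on the bounded domain, monotone convergence $\lambda_R \searrow \lambda_1$, and existence of a generalized principal eigenfunction on the half-plane. For the bounded-domain problem, the strategy is a Krein-Rutman argument. For $\alpha > 0$ large enough that a maximum principle holds for the shifted coupled system, I would define a resolvent operator $T_\alpha : C(\overline{I_R}) \times C(\overline{\Omega_R}) \to C(\overline{I_R}) \times C(\overline{\Omega_R})$ sending $(f,g)$ to the unique solution of
\begin{align*}
-\mc L_1(\phi,\psi) + \alpha\phi = f \ \text{in } I_R, \qquad -\mc L_2\psi + \alpha\psi = g \ \text{in } \Omega_R,
\end{align*}
with $\mc B(\phi,\psi) = 0$ on $I_R$, $\psi = 0$ on $\partial\Omega_R \setminus (I_R\times\{0\})$, and $\phi(\pm R) = 0$. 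Solvability and $W^{2,p}$ regularity follow by a continuity method combined with elliptic theory, compactness follows from Sobolev embeddings, and strong positivity is the key algebraic point: the Hopf lemma on $\Omega_R$ combined with the exchange condition $\mc B=0$ transmits strict positivity between road and field. Indeed, if $\psi > 0$ in $\Omega_R$ then $-d\,\partial_y\psi|_{y=0} > 0$ somewhere on $I_R$, forcing $\phi > 0$ via the one-dimensional Dirichlet problem on the road, and conversely. Krein-Rutman then supplies a unique largest positive eigenvalue $\mu_R^\alpha$ of $T_\alpha$, simple, with a positive eigenvector $(\phi_R,\psi_R)$, and setting $\lambda_R = 1/\mu_R^\alpha - \alpha$ gives the principal eigenvalue of \eqref{eigborn}.

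For monotonicity and convergence, I would rely on standard touching arguments. If $R < R'$, the restriction of $(\phi_{R'},\psi_{R'})$ to $\Omega_R$ solves the eigenvalue system in the interior with eigenvalue $\lambda_{R'}$ and takes \emph{strictly positive} values on $\partial\Omega_R \setminus (I_R\times\{0\})$ and at $x=\pm R$. Setting $t^* := \inf\{t > 0 : t(\phi_{R'},\psi_{R'}) \geq (\phi_R,\psi_R)\}$, any contact must occur in the interior because $(\phi_R,\psi_R)$ vanishes on the Dirichlet portion of the boundary, and the strong maximum principle applied to the differences yields $\lambda_{R'} \leq \lambda_R$. Let $\lambda_\infty := \lim_{R \to \infty} \lambda_R$. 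The inequality $\lambda_\infty \geq \lambda_1$ follows similarly: for any $\lambda < \lambda_1$, the definition \eqref{fgpe} supplies a positive global test pair which, restricted to $\Omega_R$, is a positive supersolution with positive boundary data, and the same touching argument yields $\lambda_R \geq \lambda$. The reverse inequality $\lambda_\infty \leq \lambda_1$ and the existence of a generalized eigenfunction are proven simultaneously by normalizing $(\phi_R,\psi_R)$ so that $\max_{I_R}\phi_R + \max_{\Omega_R}\psi_R = 1$, applying interior $W^{2,p}$ estimates, and extracting a subsequence converging locally uniformly to $(\phi_\infty, \psi_\infty) \geq 0$ solving the eigenvalue system on the half-plane with eigenvalue $\lambda_\infty$. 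This limit pair is admissible in \eqref{fgpe}, so $\lambda_1 \geq \lambda_\infty$.

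The main obstacle I foresee is preserving non-triviality of the limit $(\phi_\infty,\psi_\infty)$, since the supremum of $(\phi_R,\psi_R)$ might a priori concentrate at a point that drifts to infinity with $R$. Assumption \eqref{BFZ}, namely that $m(x,y)<0$ outside a compact set, is essential here: one can build exponential super-solutions of the form $e^{-\gamma|(x,y)|}$ in the unfavorable region and deduce a decay estimate on $(\phi_R,\psi_R)$ that is uniform in $R$, confining the mass to a bounded set independent of $R$. The delicate point is that the road itself carries no decay-inducing term, so the decay of $\phi_R$ must be inherited from $\psi_R$ through the exchange condition $\mc B=0$, which requires an argument tailored to the road-field coupling. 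Once this uniform decay is secured, the normalization forces $(\phi_\infty,\psi_\infty)\not\equiv(0,0)$, and a final application of the strong maximum principle (again exploiting the coupling to pass positivity from one component to the other) upgrades non-negativity to strict positivity, completing the identification $\lambda_\infty = \lambda_1$ and producing the generalized principal eigenfunction.
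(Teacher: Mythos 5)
The paper offers no proof of this statement---it is quoted from \cite{BDR1}---so your argument has to stand on its own, and I will compare it to the standard framework for generalized principal eigenvalues on unbounded domains, which is what \cite{BDR1} develops. Your overall architecture is correct: Krein--Rutman via a shifted resolvent on the truncated domain, touching (sliding) arguments for the monotonicity $\lambda_1^R\searrow$ and for $\lambda_\infty:=\lim_R\lambda_1^R\geq\lambda_1$, and compactness to produce a limit eigenpair giving the reverse inequality. Those steps are sound, modulo a small circularity in your strong-positivity claim for $T_\alpha$: you use Hopf's lemma and ``$-d\partial_y\psi|_{y=0}>0$'' to deduce $\phi>0$, but the exchange condition gives $-d\partial_y\psi|_{y=0}=\mu\phi-\nu\psi|_{y=0}$, whose sign you cannot read off before knowing the sign of $\phi$; one must first run a maximum principle for the full coupled system (as in Remark~\ref{req:cp}) to get $\phi\geq0$, $\psi\geq0$, and only then use Hopf and the strong maximum principle to upgrade to strict positivity.

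The genuine gap is the step you yourself flag as the main obstacle: non-triviality of the limit. Normalizing by the global sup and confining mass via supersolutions $e^{-\gamma|(x,y)|}$ requires $m+\lambda_1^R$ to be strictly negative at infinity, i.e.\ $\lambda_\infty<K:=-\limsup_{|(x,y)|\to\infty}m$. At that stage you only know $\lambda_\infty\geq\lambda_1$, which does not give an upper bound, and the bound can genuinely fail to be strict: with $m\equiv -K$ and $c=0$ one has $\lambda_1=K$ exactly, so no exponential decay is available (the normalized $\psi_R$ flatten to a constant rather than decay). The road component is worse: the 1D operator $-D\partial_{xx}-c\partial_x+(\mu-\lambda_1^R)$ is coercive only when $\lambda_\infty<\mu$, which is not established (the paper's $\lambda_1\leq\mu$ bound is derived only for $c=0$ via the variational formula). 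The robust route---and the one you should expect \cite{BDR1} to take, since the present paper explicitly observes in Section~6 that these eigenvalue facts hold \emph{without} hypothesis~\eqref{BFZ}---is to normalize at a fixed compact set (e.g.\ $\phi_R(0)+\psi_R(0,1)=1$) and invoke a Harnack-type inequality for the coupled road--field system, which transfers a pointwise lower bound on one component into a local upper bound on the other through $\mathcal B=0$. That Harnack inequality is precisely the technical lemma your proposal tries to circumvent by decay, and it is the missing piece: it requires no sign condition on $m$ at infinity and is what actually secures non-triviality of the limit.
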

We refer the reader to \cite{BDR1} for the details. The real number $\l^R$ and the pair $(\phi_R,\psi_R)$ are called respectively the principal eigenvalue and eigenfunction of \eqref{eigborn}.
%a{Pourquoi ne pas appeller $(\phi,\psi)$ toutes les pef?
%Comme ca on les melange pas avec la solution du Pb de Cauchy}{Fait !} 

The next characterization of $\l$ is obtained in the case when $c=0$. It is 
in the spirit of the classical Rayleigh-Ritz formula. We introduce the following Sobolev space:
$$
\t H^1_{0}(\O_R) := \{ u \in H^1(\O_R) \ : \ u = 0 \ \text{ on } (\partial B_R)\cap (\R\times \R^+) \ \text{in the sense of the trace}  \}.
$$

\begin{prop}[{\cite[Proposition 4.5]{BDR1}}]\label{variational}
Assume that $c=0$. The principal eigenvalue $\lambda_{1}^{R}$ of \eqref{eigborn} satisfies
\begin{equation}\label{eqvariational}
\lambda_{1}^{R} = \inf_{\substack{(\phi,\psi) \in \mathcal{H}_{R} \\ (\phi,\psi) \not\equiv (0,0)}} \frac{  \mu\int_{I_R}D \vert \phi^{\prime} \vert^{2}  + \nu\int_{\Omega_{R}} \left( d\vert \nabla \psi \vert^{2} - m\psi^{2} \right) + \int_{I_R}(\mu \phi- \nu \psi\vert_{y=0})^{2}  }{\mu\int_{I_R}\phi^{2} + \nu\int_{\Omega_{R}}\psi^{2} },
\end{equation}
where we recall that $m=f_v(\cdot,\cdot,0)$, and
$$
\mathcal{H}_{R} :=   H^{1}_{0}(I_R) \times \t H^1_{0}(\O_R)
%\footnote{ The space $H^{1}_{0}(\Omega_{R}\cup (I_{R}\times\{0\}))$ is the completion in $H^1(\O_R)$ of the smooth functions compactly supported in $\Omega_{R}\cup (I_{R}\times\{0\})$. In particular, these functions do not need to vanish on $I_R \times \{ 0 \}$.}.
$$

\end{prop}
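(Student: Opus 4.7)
The plan is to prove \eqref{eqvariational} by a two-sided inequality. For one direction, I will evaluate the Rayleigh quotient at the principal eigenpair $(\phi_R,\psi_R)$ from Proposition~\ref{limit} to obtain $\lambda_1^R \geq \inf$. For the other, I will apply the direct method of the calculus of variations to produce a minimizer, verify that it solves \eqref{eigborn}, and appeal to the uniqueness in Proposition~\ref{limit} to conclude that the infimum equals $\lambda_1^R$. The hypothesis $c=0$ is essential here, since it is precisely what makes the road--field operator symmetric with respect to the weighted inner product $\langle(\phi,\psi),(\phi',\psi')\rangle := \mu\int_{I_R}\phi\phi' + \nu\int_{\Omega_R}\psi\psi'$; the drift terms would produce antisymmetric contributions that spoil the variational structure.

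For the upper bound, I test the first equation of \eqref{eigborn} against $\mu\phi_R$ on $I_R$ and the second against $\nu\psi_R$ on $\Omega_R$; the weights $\mu,\nu$ are chosen exactly to symmetrize the coupling. After integration by parts, using $\phi_R(\pm R)=0$ and $\psi_R=0$ on $(\partial\Omega_R)\setminus(I_R\times\{0\})$, Green's identity on $\Omega_R$ produces the trace term $-\nu d\int_{I_R}\psi_R\vert_{y=0}\,\partial_y\psi_R\vert_{y=0}\,dx$; substituting the exchange condition $-d\partial_y\psi_R\vert_{y=0}=\mu\phi_R-\nu\psi_R\vert_{y=0}$ converts it into $\nu\int_{I_R}\psi_R\vert_{y=0}(\mu\phi_R-\nu\psi_R\vert_{y=0})\,dx$. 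Combined with the term $\mu\int_{I_R}(\mu\phi_R^2-\nu\phi_R\psi_R\vert_{y=0})\,dx$ coming from the road equation, the mixed boundary contributions assemble into the perfect square $\int_{I_R}(\mu\phi_R-\nu\psi_R\vert_{y=0})^2\,dx$, yielding \eqref{eqvariational} with equality at $(\phi_R,\psi_R)$.

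For the reverse inequality, I regard the numerator of \eqref{eqvariational} as a symmetric bilinear form $\mathcal A$ on $\mathcal H_R$. Since $m$ is bounded, $\mathcal A + K\langle\cdot,\cdot\rangle$ is coercive and continuous on $\mathcal H_R$ for $K$ large, and Rellich compactness (available because $\psi$ vanishes on the arc of $\partial\Omega_R$ and $\phi$ at $\pm R$) makes the embedding $\mathcal H_R\hookrightarrow L^2(I_R)\times L^2(\Omega_R)$ compact. The direct method therefore produces a minimizer $(\phi_\star,\psi_\star)$, and replacing it by $(\vert\phi_\star\vert,\vert\psi_\star\vert)$ does not increase the Rayleigh quotient: the gradient and $m$-terms are unchanged, while the coupling term $\int_{I_R}(\mu\phi-\nu\psi\vert_{y=0})^2\,dx$ can only decrease. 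The minimizer may thus be assumed nonnegative. Writing the Euler--Lagrange equation against test pairs in $\mathcal H_R$ recovers both interior equations in \eqref{eigborn}; the key observation is that testing against $\tilde\psi\in\tilde H^1_0(\Omega_R)$ with nonzero trace on $I_R\times\{0\}$ produces, via the coupling term $(\mu\phi-\nu\psi\vert_{y=0})^2$, precisely the exchange condition $\mathcal B(\phi_\star,\psi_\star)=0$. The uniqueness of the principal eigenpair in Proposition~\ref{limit} then forces the corresponding eigenvalue to be $\lambda_1^R$.

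The main technical obstacle is exactly this last boundary bookkeeping: the variational problem imposes no explicit boundary condition on $\psi$ along $I_R\times\{0\}$, yet its Euler--Lagrange equation must reproduce the nonstandard Wentzell-type exchange condition rather than a Neumann or Dirichlet one. This is made to work by the interplay between the space $\mathcal H_R$, in which $\psi$ has free trace on $I_R\times\{0\}$, and the precise algebraic form of the coupling term, which is itself dictated by the symmetrization used in the upper-bound half.
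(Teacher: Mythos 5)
Your two-sided argument is correct and is the standard Rayleigh--Ritz proof: the symmetrization by the weights $\mu,\nu$ and the substitution of the exchange condition to assemble the perfect square $\int_{I_R}(\mu\phi-\nu\psi\vert_{y=0})^2$ are exactly the computations that establish the identity at the eigenpair, and the direct-method plus Euler--Lagrange argument (recovering the Wentzell-type exchange condition from the free trace of $\psi$ on $I_R\times\{0\}$) is the natural converse. The paper delegates the proof to \cite[Prop.~4.5]{BDR1}, and your route is the one used there; the only small imprecisions are that compactness of $\mathcal H_R\hookrightarrow L^2\times L^2$ follows from boundedness of the Lipschitz domains $I_R,\Omega_R$ rather than from the Dirichlet conditions, and that one should invoke the strong comparison principle (Remark~\ref{req:cp}) to upgrade the nonnegative minimizer to a strictly positive one before applying the uniqueness in Proposition~\ref{limit}.
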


%
%
%
%Now, let us mention that a cornerstone in the proof of all the results recalled above is the following Harnack inequality, also proved in \cite{BDR1}, that will also be needed here:{Pas vraiment besoin de ï¿½a, l'enlever...}
%
%
%
%\begin{theorem}\label{Harnack}
%Let $f\in L^\infty(\R)$ and $g\in L^\infty(\R\times\R^+)$. Take $D,d, \mu, \nu >0$ and $s,c\in \mathbb{R}$. Then, for any $R>0$, there is a positive constant $C$ such that, for any non-negative solution $(u,v) \in W^{2,3}_{loc}(\R)\times W_{loc}^{2,3}(\times \R^+})$ of 
%\begin{equation}\label{lin}
%\left\{
%\begin{array}{rlll}
%-Du^{\prime \prime} -c u^{\prime } -f(x) u&=  \nu \rest{v}{y=0}-\mu u, \quad &x \in \mathbb{R}, \\
%-d\Delta v - s \partial_{x}v - g(x,y)v&= 0, \quad &(x,y) \in \mathbb{R}\times \mathbb{R}^{+}, \\
%-d\partial_{y}\rest{v}{y=0} &= \mu u- \nu \rest{v}{y=0}, \quad &x \in \mathbb{R},
%\end{array}
%\right.
%\end{equation}
%the following holds:
%$$
%\max\left\{ \sup_{I_R }u   ,   \sup_{\Omega_{R}}v \right\} \leq 
%C\min\left\{\inf_{I_{R}}u  ,  \inf_{\Omega_{R}}v \right\}.
%$$
%%%% Dependance of C_{R} on R , exponential ?
%\end{theorem}
%
%
%
Let us also recall the following result concerning the continuity and monotonicity of~$\l$. We use the notation $\l(c,L,d,D)$ to indicate the generalized principal eigenvalue of \eqref{lsmf}, with coefficients $c,d,D$ and with nonlinearity $f^L$ given by \eqref{f l}. Then, we treat $\l$ as a function from $(\R)^2\times(\R^+)^2$ to $\R$. Analogous notations will be used several times in the sequel.

\begin{prop}[{\cite[Propositions 2.4 and 2.5]{BDR1}}]\label{prop function}
Let $\lambda_{1}(c,L,d,D)$ be the generalized principal eigenvalue of system~\eqref{lsmf} with nonlinearity $f^L$ defined by \eqref{f l}. Then,

\begin{itemize}

\item $\l(c,L,d,D)$ is a locally Lipschitz-continuous function on $(\R)^2\times(\R^+)^2$.

\item If $c=0$, then $\l(c,L,d,D)$ is non-increasing with respect to $L$ and non-decreasing with respect to $d$ and $D$.

\item If $c=0$ and $\l(c,L,d,D)\leq 0$, then $\l(c,L,d,D)$ is strictly decreasing with respect to $L$ and strictly increasing with respect to $d$ and $D$.

\end{itemize}

\end{prop}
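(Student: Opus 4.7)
The plan is to establish all three assertions by transferring properties of the classical bounded-domain principal eigenvalues $\lambda_1^R$ to $\lambda_1$ via the monotone convergence $\lambda_1^R \searrow \lambda_1$ supplied by Proposition \ref{limit}. Throughout, I write $\lambda_1^R(p)$ and $\lambda_1(p)$ to stress dependence on the parameter tuple $p=(c,L,d,D)$, and denote by $\mathcal L_2^L$ the operator $\mathcal L_2$ corresponding to the potential $m=m^L$.

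For the non-strict monotonicity assertions (with $c=0$), the Rayleigh--Ritz formula \eqref{eqvariational} for $\lambda_1^R$ does the job immediately. Since $\chi$ is non-increasing, increasing $L$ pointwise increases $m^L(x,y)=\chi(|(x,y)|-L)$, which decreases the numerator of the Rayleigh quotient through the $-m\psi^2$ term, so $\lambda_1^R$ is non-increasing in $L$; increasing $d$ or $D$ directly increases the gradient terms $d|\nabla\psi|^2$ and $D|\phi'|^2$, so $\lambda_1^R$ is non-decreasing in $d$ and $D$. Passing to the limit $R\to\infty$ yields the corresponding non-strict monotonicity of $\lambda_1$.

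For the strict monotonicity under the additional hypothesis $\lambda_1\leq 0$ (still $c=0$), the essential extra ingredient is the existence of a positive generalized principal eigenfunction $(\Phi,\Psi)$ associated with $\lambda_1$. This is furnished by \cite{BDR1}: under \eqref{BFZ} combined with $\lambda_1\leq 0$, one normalizes $(\Phi_R,\Psi_R)$ suitably and extracts a positive limit via interior elliptic estimates, the strict positivity following from the strong maximum principle and the boundary condition $\mathcal B(\Phi,\Psi)=0$. For $L'>L$, the eigenfunction at $L$ then satisfies
\[
\mathcal L_2^{L'}(\Psi)+\lambda_1(L)\,\Psi\;=\;(m^{L'}-m^L)\Psi\;\geq\;0,\qquad \not\equiv 0,
\]
since $m^{L'}>m^L$ on a non-empty open set on which $\Psi>0$. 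Thus $(\Phi,\Psi)$ is a \emph{strict} super-solution of the eigenvalue problem at level $\lambda_1(L)$ for the parameter~$L'$. A Krein--Rutman-type strong comparison argument---for instance, examining the maximum of the ratio $\Psi_R'/\Psi$ for the bounded-domain eigenfunction $(\Phi_R',\Psi_R')$ at $L'$, and exploiting the strict inequality via the strong maximum principle together with Hopf's lemma at the road---then yields $\lambda_1^R(L')<\lambda_1(L)$ with a gap bounded away from zero uniformly in $R$ large, whence $\lambda_1(L')<\lambda_1(L)$ in the limit. The arguments for $d$ and $D$ are analogous, the strict super-solution property coming from the non-trivial perturbation terms $(d'-d)\Delta\Psi$ and $(D'-D)\Phi''$, which cannot vanish identically because of \eqref{BFZ}.

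The local Lipschitz continuity is the most delicate point, and the main obstacle is the loss of the variational structure when $c\neq 0$. My approach is to argue directly from the sup characterization \eqref{fgpe}. Given $p,p'$ in a compact parameter region, I would normalize the eigenpair $(\Phi_R,\Psi_R)$ associated with $\lambda_1^R(p)$ by $\|\Phi_R\|_{L^\infty}+\|\Psi_R\|_{L^\infty}=1$, so that interior Schauder estimates yield $C^2$ bounds on any fixed compact set uniformly in $R$. Viewing $(\Phi_R,\Psi_R)$ as a trial sub-solution for the operator at $p'$, the discrepancies
\[
(c-c')\partial_x\Phi_R,\quad (D-D')\Phi_R'',\quad (d-d')\Delta\Psi_R,\quad (m^L-m^{L'})\Psi_R
\]
are each pointwise bounded by $C|p-p'|(\Phi_R+\Psi_R)$, giving $|\lambda_1^R(p)-\lambda_1^R(p')|\leq C|p-p'|$ with $C$ uniform in $R$, and Lipschitz continuity of $\lambda_1$ follows by passing to the limit. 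The core technical difficulty is precisely the uniformity in $R$ of the Schauder constants; this is where \eqref{BFZ} enters, forcing the eigenfunctions to concentrate on a fixed compact set (and decay at infinity) rather than escaping to spatial infinity as $R$ grows.
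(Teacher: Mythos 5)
The paper states this proposition as a citation to [BDR1] and gives no proof, so there is no in-paper argument to compare against. I will instead assess your proposal on its own merits.

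The non-strict monotonicity argument (via the Rayleigh--Ritz formula~\eqref{eqvariational} for $\lambda_1^R$, then passing to the monotone limit) is correct and is the natural route.

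For the Lipschitz continuity, however, there is a genuine gap at the central step. You claim that, normalizing $\|\Phi_R\|_\infty+\|\Psi_R\|_\infty=1$, the discrepancies $(c-c')\partial_x\Phi_R$, $(D-D')\Phi_R''$, $(d-d')\Delta\Psi_R$ are each pointwise bounded by $C|p-p'|(\Phi_R+\Psi_R)$. To use $(\Phi_R,\Psi_R)$ as a trial sub-/super-solution for the operator at~$p'$, one needs precisely this bound over the full truncated domain, and that in turn requires the Harnack-type gradient estimates $|\partial_x\Phi_R|\le C\,\Phi_R$ and $|\nabla\Psi_R|\le C(\Phi_R+\Psi_R)$ up to the boundary. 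These fail: $\Phi_R(\pm R)=0$ while $\Phi_R'(\pm R)\neq0$ by Hopf's lemma, so the ratio $|\Phi_R'|/\Phi_R$ blows up near $x=\pm R$, and likewise for $\Psi_R$ on the Dirichlet part of $\partial\O_R$. This is not a matter of uniform Schauder constants, which is what you identify as the ``core technical difficulty''; Schauder gives $\|\Phi_R\|_{C^2}$ control in terms of $\|\Phi_R\|_\infty$, which is the wrong kind of bound here. Hypothesis~\eqref{BFZ} controls decay at infinity for the \emph{unbounded-domain} eigenfunction and cannot repair the boundary-layer difficulty on the truncated domains. The term $(m^L-m^{L'})\Psi_R$ \emph{is} fine (since $m^L-m^{L'}$ is uniformly bounded by $C|L-L'|$ for $\chi$ smooth with bounded derivative), so your argument gives Lipschitz continuity in $L$ but not in $c$, $d$, $D$.

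Two smaller issues in the strict monotonicity part. First, your computation $\mathcal L_2^{L'}(\Psi)+\lambda_1(L)\Psi=(m^{L'}-m^L)\Psi\ge 0$ with the eigenfunction at $L$ produces a strict \emph{sub}solution for the problem at $L'$, not a supersolution; the sign is important because it determines whether you are pushing $\lambda_1(L')$ down (which is what you want) or up. Second, the analogous strict-perturbation claim for $d$ and $D$ is not as easy as you suggest: the perturbations $(d'-d)\Delta\Psi$ and $(D'-D)\Phi''$ have no definite sign, so merely noting they do not vanish identically does not produce a one-sided strict sub-/super-solution, unlike the zeroth-order perturbation in $L$. The actual mechanism in the variational case is different --- one must show that a minimizer cannot have vanishing gradient, and then combine this with the existence of a generalized principal eigenfunction when $\lambda_1\le 0$. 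Finally, the Krein--Rutman-type strong comparison on the unbounded road-field domain, which you flag only as a parenthetical, is precisely the nontrivial step and would need to be proved, not invoked.
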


%\begin{prop}\label{prop function}
%Let $\lambda_{1}(c,D,d,L)$ be the generalized principal eigenvalue of system~\eqref{lsmf} with nonlinearity $f^L$ defined by \eqref{f l} associated with $c, L \in \R$ and $D,d>0$. Then, the function
%$$
%(c,D,d,L)\in \R \times (\mathbb{R}^{+})^{2} \times \R \mapsto \lambda_{1}(c,D,d,L)
%$$
%is locally Lipschitz-continuous.
%\end{prop}
%
%\begin{prop}\label{prop function}
%Assume that $c=0$ and let $\lambda_{1}(d,D,L)$ be the generalized principal eigenvalue stem \eqref{lsmf} with nonlinearity $f^L$ defined by \eqref{f l}. Assume that 
%$$
%\l (d,D,L) \leq 0.
%$$
%Then, $\l$ is strictly decreasing with respect to $L$ and strictly increasing with respect to $d,D$.
%\end{prop}

Next, we consider the generalized principal eigenvalue for the model ``without the road"  \eqref{no road mf}, that we require to answer Questions \ref{question 1} and~\ref{question 2}. The linearization around $v =0$ of the stationary system associated with \eqref{no road mf} reads
\begin{equation}\label{lsmfn}
\left\{
\begin{array}{rll}
-\mc L_2(\psi) &=0, \quad &(x,y) \in   \mathbb{R}\times \mathbb{R}^{+}, \\
-\partial_{y}\psi\vert_{y=0} &= 0, \quad &x \in  \mathbb{R},
\end{array}
\right.
\end{equation}
where $\mc L_2$ is defined in \eqref{notation}.
%\begin{equation}\label{lsmfn}
%\left\{
%\begin{array}{rll}
%-d\Delta v -c\partial_{x}v &= f_{v}(x,y,0)v, \quad &(x,y) \in   \mathbb{R}\times \mathbb{R}^{+}, \\
%-\partial_{y}v\vert_{y=0} &= 0, \quad &x \in  \mathbb{R}.
%\end{array}
%\right.
%\end{equation}
%
%
%
The generalized principal eigenvalue of \eqref{lsmfn} is given~by

\begin{equation}\label{fgpe no road}
\begin{array}{rl}
\lambda_{N}:=\sup\Big\{ \lambda \in \mathbb{R} \ : \ \exists  \psi \geq 0,\ \psi\not\equiv 0  
  &\text{ such that } \\
 (\mc L_{2}(\psi)+\lambda \psi ) &\leq 0 \ \text{ on }\ \R\times \R^+ , \   \partial_{y}\psi\vert_{y=0} \leq 0 \ \text{ on }\ \R \Big\}.
\end{array}
\end{equation}
The subscript $N$ refers to the Neumann boundary condition. Again, the test functions $\psi$ in \eqref{fgpe no road} are assumed to be in $W^{2,p}_{loc}(\R \times [0,+\infty))$. We also consider the principal eigenvalue on the truncated domains $\Omega_{R}$, which is the unique quantity $\lambda_N^R$ such that the problem

\begin{equation}\label{eigborn no road}
\left\{
\begin{array}{rllrl}
-\mc L_{2}(\psi) &= \lambda^{R}_{N}\psi, \quad  &(x,y) \in \Omega_{R}, \\
-\partial_{y}\psi\vert_{y=0} &= 0 , \quad &x \in I_R, \\
\psi(x,y) &= 0 , \quad  &(x,y) \in (\partial \Omega_{R}) \backslash {(I_R\times \{0\})},
\end{array}
\right.
\end{equation}
admits a positive solution $\psi\in W^{2,p}(\Omega_{R})$. The results concerning $\l$ hold true for $\lambda_N$. We gather them in the following proposition.
\begin{prop}\label{prop lambda n}
Let $\lambda_{N}$ be the generalized principal eigenvalue of the model ``without the road" \eqref{lsmfn}, and let $\lambda_{N}^R$ be the principal eigenvalue of \eqref{eigborn no road}. Then
\begin{itemize}
\item $\lambda_N$ is the decreasing limit of $\lambda_{N}^R$, i.e.,
\begin{equation}\label{cv eig no road}
\lambda_{N}^{R}\underset{R \to +\infty}{\searrow} \lambda_{N}.
\end{equation}
\item If $c=0$, then
\begin{equation}\label{eqvariational no road}
\lambda_{N}^{R} = \inf_{\psi \in \t H^1_{0}(\O_R)} \frac{  \int_{\Omega_{R}} \left( d\vert \nabla \psi \vert^{2} - m\psi^{2} \right)  }{\int_{\Omega_{R}}\psi^{2} }.
\end{equation}
\item If the nonlinearity in \eqref{lsmfn} is given by $f^L$, defined in \eqref{f l}, then $L\mapsto \lambda_{N}(L)$ is a continuous and non-increasing function.

\end{itemize}
\end{prop}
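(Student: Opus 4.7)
\medskip

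The three assertions in Proposition \ref{prop lambda n} are the exact counterparts, for the scalar Neumann problem \eqref{lsmfn}, of the road--field statements recalled in Propositions \ref{limit}, \ref{variational} and \ref{prop function} from \cite{BDR1}. The plan is therefore to mirror those proofs, noting that they become substantially simpler here because only a single equation on the half-plane is involved and the mixed field/road coupling has disappeared. The main objects used throughout are the sup-characterization of the principal eigenvalue on bounded domains (a BNV-type definition, valid for our mixed Dirichlet/Neumann setting) and the Krein--Rutman theorem on each $\Omega_R$, which together give the existence of $\lambda_N^R$ and of a positive eigenfunction $\psi_R \in W^{2,p}(\Omega_R)$.

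For the first item, I would first show the monotonicity of $R \mapsto \lambda_N^R$. If $R<R'$, the eigenfunction $\psi_{R'}$ restricted to $\Omega_R$ is strictly positive on $\overline{\Omega_R}$, satisfies $-\mc L_2(\psi_{R'}) = \lambda_N^{R'} \psi_{R'}$ and the Neumann condition on $I_R\times\{0\}$; hence it is an admissible test function in the sup characterization of $\lambda_N^R$, giving $\lambda_N^{R'} \leq \lambda_N^R$. For the convergence, the inequality $\lambda_N \leq \lambda_N^R$ follows by restricting any admissible pair $\psi$ from the definition \eqref{fgpe no road} of $\lambda_N$ to $\Omega_R$. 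For the reverse inequality at the limit, I would normalize $\psi_R((x_0,y_0))=1$ at a fixed interior point and invoke the Harnack inequality together with interior Schauder estimates and boundary Schauder estimates up to the flat Neumann portion $I_R\times\{0\}$ to obtain uniform $C^{1,\alpha}_{loc}$ bounds. Up to a subsequence, $\psi_R$ converges locally uniformly to a nonnegative limit $\psi_\infty$; the normalization ensures $\psi_\infty\not\equiv0$, while the strong maximum principle upgrades this to $\psi_\infty>0$. Then $\psi_\infty$ solves the eigenvalue equation with $\lambda=\lim_R \lambda_N^R$ and satisfies the Neumann condition on $\R\times\{0\}$, hence $\lim_R \lambda_N^R \leq \lambda_N$.

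For the second item, when $c=0$ the operator $-\mc L_2$ is formally self-adjoint and the boundary conditions in \eqref{eigborn no road} are exactly the natural ones associated with the Dirichlet quadratic form on $\t H^1_0(\Omega_R)$: Dirichlet on $(\partial\Omega_R)\setminus(I_R\times\{0\})$ is enforced through the space, and the Neumann condition on $I_R\times\{0\}$ appears as the natural boundary condition. Since $m$ is bounded, the Rayleigh quotient is bounded from below and coercive modulo a shift, so standard variational spectral theory (compact embedding $\t H^1_0(\Omega_R)\hookrightarrow L^2(\Omega_R)$, minimization of the Rayleigh quotient) yields \eqref{eqvariational no road}; positivity of the minimizer is obtained by replacing $\psi$ with $|\psi|$ and invoking the strong maximum principle.

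For the third item, the monotonicity is immediate from the pointwise monotonicity $L \mapsto m^L(x,y) = \chi(|(x,y)|-L)$, which is non-decreasing because $\chi$ is decreasing: if $L_1<L_2$ then $m^{L_1}\leq m^{L_2}$, and any supersolution admissible for $\lambda_N(L_2)$ is also admissible for $\lambda_N(L_1)$, yielding $\lambda_N(L_1) \geq \lambda_N(L_2)$. Continuity would be obtained via a uniform Lipschitz estimate: on bounded $L$-intervals, $\|m^{L_1}-m^{L_2}\|_\infty \leq \|\chi'\|_\infty |L_1-L_2|$, and by standard perturbation of the principal eigenvalue on the bounded domain $\Omega_R$ one gets $|\lambda_N^R(L_1)-\lambda_N^R(L_2)| \leq \|\chi'\|_\infty|L_1-L_2|$ with a constant independent of $R$; the (monotone in $R$) passage to the limit then yields the same Lipschitz estimate for $\lambda_N(L)$, hence continuity. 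The step I expect to require the most care is the compactness argument in item one, specifically the Harnack and Schauder regularity up to the Neumann boundary that allows a local limit to be extracted and shown to be nontrivial; once this is established, the other items follow by classical arguments that are strictly simpler than the road--field versions already carried out in \cite{BDR1}.
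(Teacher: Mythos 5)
Your proof is correct, but it takes a genuinely different route from the paper. The paper dispatches all three items in two sentences: it invokes Lemma \ref{lemma lambda N} to identify $\lambda_N$ with the generalized principal eigenvalue of the whole-plane operator $\mc L_2$ on $\R^2$ (using the reflection $\psi(x,|y|)$, regularized via the parabolic semigroup), and then cites \cite{BRforcedspeed} for the first two items and \cite{BR4} for the third, since those papers already establish the truncated-domain convergence, the Rayleigh--Ritz formula, and the continuity/monotonicity in $L$ for scalar operators on $\R^N$. You instead re-derive everything intrinsically on the half-plane with the mixed Dirichlet/Neumann boundary: monotonicity of $R\mapsto\lambda_N^R$ via restriction of eigenfunctions as supersolutions, $\lambda_N\le\lambda_N^R$ by restricting admissible test functions, the reverse inequality at the limit via normalization + Harnack + Schauder up to the flat Neumann boundary, the variational formula via self-adjointness when $c=0$, and the Lipschitz-in-$L$ estimate transferred from $\lambda_N^R$ to $\lambda_N$. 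Both routes are sound. The paper's reduction is shorter and leverages known scalar results, but it hinges on the reflection trick (and hence on the half-plane geometry and the even extension of $m$); your direct argument is longer but self-contained, does not require the symmetrization step, and would adapt more readily to settings where reflection is unavailable. One point you should make explicit is that your argument presupposes a BNV-type sup-characterization of $\lambda_N^R$ on $\Omega_R$ with mixed Dirichlet/Neumann data -- this is true and standard but is not among the facts quoted in the paper, so it would need to be stated (it is essentially the Krein--Rutman eigenvalue coinciding with the supremum of $\lambda$ for which a positive supersolution exists, and the proof for the mixed boundary condition follows the classical one).
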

The two first points are readily derived from \cite{BRforcedspeed}: indeed, $\lambda_N$ coincides with the generalized principal eigenvalue of the problem in the whole space \eqref{OriginalCCmf} with~$f$ extended by symmetry, as explained in the next Section \ref{whole plane}. The third point concerning the monotonicity and the continuity comes from \cite{BR4}.

\subsection{The case of the whole plane}\label{whole plane}

Systems \eqref{mf} and \eqref{no road mf} are set on half-planes. Let us explain here how these models are actually equivalent to the same systems set on the whole plane under a symmetry hypothesis on the nonlinearity. When writing a road-field system where the road is not the boundary of an half-plane but a line in the middle of a plane, one needs to consider $3$ equations: one equation for each portion separated by the road and an equation on the road, completed with two exchanges conditions between the road and each side of the field. We assume that the exchanges are the same between the road and the two sides of the field. Moreover, we assume that the environmental conditions are symmetric with respect to the road, that is, the nonlinearity $f$ on the field is even with respect to the $y$ variable, i.e, $f(x,y,v) = f(x,-y,v)$ for every $(x,y) \in \R^2$ and $v \geq 0$. The system then writes (in the moving frame that follows the climate change):
 \begin{equation}\label{two fields}
\left\{
\begin{array}{rll}
\partial_{t}u-D\partial_{xx}u -c\partial_x u &= \nu ( v\vert_{y=0^{+}} +v\vert_{y=0^{-}})-\mu u , &\quad t >0,\ x\in \mathbb{R}, \\
\partial_{t}v-d\Delta v - c\partial_x v &= {f}(x,y,v), &\quad t >0 ,\ (x,y)\!\in\!\mathbb{R}
\!\times\!\mathbb{R}^{+} , \\
\mp d\partial_{y}v\vert_{y=0^{\pm}} &= \frac{\mu}{2} u-\nu v\vert_{y=0^{\pm}}, &\quad t >0 ,\ x\in \mathbb{R}.
\end{array}
\right.
\end{equation}
We point out that the set in the second equation has two connected components~and thus it can be treated as two distinct equations. The last line in \eqref{two fields} are also two equations with the proportion $\mu$ of $u$ leaving the road evenly split among the~two~sides.

Under these hypotheses of symmetry, the dynamical properties of the system~\eqref{two fields} are the same as those of the system on the half-plane \eqref{mf}. This is clear if one restricts to a symmetric initial datum $(u_0,v_0)$, i.e., such that $v_0(x,y)=v_0(x,-y)$ for every $(x,y)\in \R^2$. Indeed, the corresponding solution $(u,v)$ of \eqref{two fields} also satisfies $v(t,x,y) = v(t,x,-y)$ for every $t>0$, $(x,y) \in \R^2$, hence
%$v\vert_{y=0^{+}}= v\vert_{y=0^{-}}$, and then
$$
\left\{
\begin{array}{rll}
\partial_{t}u-D\partial_{xx}u -c\partial_x u &= 2\nu v\vert_{y=0^+} -\mu u , &\quad t >0,\ x\in \mathbb{R}, \\
\partial_{t}v-d\Delta v - c\partial_x v &= f(x,y,v), &\quad t >0 ,\ (x,y)\in \R\times \R^+, \\
- d\partial_{y}v\vert_{y=0^{\pm}} &= \frac{\mu}{2} u-\nu v\vert_{y=0^+}, &\quad t >0 ,\ x\in \mathbb{R}.
\end{array}
\right.
$$
It follows that $(\t u, v) := (\frac{1}{2}u,v)$ is a solution of the system with the road in the half-plane \eqref{mf}. For non-symmetric solutions of  \eqref{two fields}, the long-time behavior also turns out to be governed by \eqref{mf}. Indeed, any solution of \eqref{two fields} arising from a non zero initial datum is strictly positive at time $t=1$, and can then be nested between two symmetric solutions, both converging to the same stationary solution.

Let us mention that, in the paper \cite{BDR1}, where we define and study the notion of generalized principal eigenvalues for road-field systems, we also consider the case of non-symmetric fields.

By the same arguments as above, the problem without the road in the half-plane~\eqref{no road mf} 
is also seen to share the same dynamical properties as the equation in the whole plane~\eqref{OriginalCCmf}. 
Actually, a stronger statement holds true concerning the linearized stationary equations
\begin{equation}\label{stat plan}
-\mc L_2(\psi) = 0 , \quad (x,y) \in \R^2,
\end{equation}
without any specific assumptions on $m$ (besides regularity).
%We recall from \cite{BRforcedspeed} that the generalized principal eigenvalue of \eqref{OriginalCCmf} is defined by
\begin{lemma}\label{lemma lambda N}
Assume that the nonlinearity $f(x,y,v)$ in \eqref{OriginalCCmf} is even with respect to the variable $y$.
Then extinction (resp.~persistence) occurs for \eqref{OriginalCCmf} if and only if it occurs for \eqref{no road mf}.

Moreover, \eqref{stat plan} admits a positive supersolution (resp.~subsolution) if and only if~\eqref{lsmfn} does. 
%$$
%\lambda_{N}= \lambda_{N}^{\prime}.
%$$
\end{lemma}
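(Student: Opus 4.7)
The plan is to exploit the invariance of the full-plane problem \eqref{OriginalCCmf} under the reflection $y \mapsto -y$ (granted by the $y$-evenness of $f$, and hence of $m$) in order to match even functions on $\R^2$ with functions on $\R \times \R^+$ satisfying the Neumann condition at $\{y=0\}$. The correspondence will go from the half-plane to the whole plane by even reflection, $\psi \mapsto \psi(x, |y|)$, and back by symmetrization, $\Psi \mapsto \tfrac{1}{2}(\Psi(x, y) + \Psi(x, -y))$.

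For the dynamical equivalence, if $v$ solves \eqref{no road mf} starting from a non-negative, compactly supported datum, then $\tilde v(t, x, y) := v(t, x, |y|)$ is a classical solution of \eqref{OriginalCCmf} on $\R^2$: continuity across $\{y=0\}$ is automatic, continuity of $\partial_y$ is the Neumann condition, the $y$-evenness of $f$ ensures the equation matches on both sides, and interior parabolic regularity takes care of the rest. Conversely, a solution $V$ of \eqref{OriginalCCmf} with a compactly supported initial datum $V_0$ is bracketed, via the comparison principle, between the symmetric solutions arising from $\min\{V_0(x, y), V_0(x, -y)\}$ and $\max\{V_0(x, y), V_0(x, -y)\}$, each of which is the even extension of a half-plane solution by the argument just given. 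Hence extinction and persistence for \eqref{OriginalCCmf} and \eqref{no road mf} are equivalent.

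For the supersolution equivalence, the whole-plane $\Rightarrow$ half-plane direction is straightforward by symmetrization: if $\Psi$ is a positive supersolution of \eqref{stat plan}, then $\Psi(x, -y)$ is also one (by $y$-evenness of $m$), and by linearity so is $\Psi^s := \tfrac12(\Psi(x,y) + \Psi(x,-y))$; since $\Psi^s$ is even in $y$ we have $\partial_y \Psi^s|_{y=0} = 0$, so its restriction to the half-plane is a positive supersolution of \eqref{lsmfn}. For the reverse direction, let $\psi$ be a positive supersolution of \eqref{lsmfn}, so that $\partial_y \psi|_{y=0} \leq 0$; then $\tilde\psi(x, y) := \psi(x, |y|)$ satisfies $-\mc L_2 \tilde\psi \geq 0$ pointwise for $y \neq 0$, and the distributional part of $\partial_{yy}\tilde\psi$ at $\{y=0\}$ is $2\partial_y\psi|_{y=0}\,\delta(y) \leq 0$, so that $-\mc L_2 \tilde\psi \geq 0$ holds distributionally on all of $\R^2$. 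Subsolutions are handled analogously with all inequalities reversed, the Neumann inequality now being $\partial_y \psi|_{y=0} \geq 0$, which yields the correct sign of the Dirac contribution.

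The main obstacle is that $\tilde\psi$ is generically only Lipschitz across $\{y=0\}$, not $W^{2,p}_{loc}$, so it is not literally a classical supersolution. I see two ways to resolve this. The first is to mollify in the $y$-variable on a thin slab $\{|y|<\epsilon\}$: the non-positive Dirac contribution gives a strict margin in the distributional supersolution inequality that dominates the $O(\epsilon)$ error produced by the smearing. A cleaner alternative is to pass through generalized principal eigenvalues: even reflection of the principal eigenfunction on $\O_R$ (from \eqref{eigborn no road}) gives a principal eigenfunction on $B_R$ with the same eigenvalue, now automatically $C^1$ across $\{y=0\}$ because the reflected object satisfies $\partial_y \psi|_{y=0} = 0$, so no singular contribution appears and elliptic regularity applies. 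This yields equality of the truncated principal eigenvalues on $\O_R$ and on $B_R$, and, passing to the limit $R\to +\infty$ as in Proposition \ref{prop lambda n}, equality of the generalized principal eigenvalues. The sub/supersolution criteria of \cite{BDR1,BR4} for these eigenvalues then translate into the claimed equivalence.
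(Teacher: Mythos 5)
Your proof of the first statement (dynamical equivalence via bracketing between symmetric solutions) and of the whole-plane~$\Rightarrow$~half-plane direction of the second statement (symmetrization) matches the paper's approach. The interesting divergence is in the half-plane~$\Rightarrow$~whole-plane direction, where the paper does not mollify: it takes the (merely Lipschitz) even extension $\psi(x,|y|)$ as initial datum for the parabolic problem $\partial_t\t\psi=\mc L_2(\t\psi)$ on $\R^2$ and uses the fact that the comparison principle gives $\partial_t\t\psi\leq 0$, so that the now-smooth $\t\psi(1,\cdot,\cdot)$ is the desired $W^{2,p}$ supersolution. This is a slick trick worth knowing: parabolic evolution simultaneously smooths and preserves the supersolution property.

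Of your two proposed resolutions, the first (mollification in $y$) has a genuine gap: the claim that ``the non-positive Dirac contribution gives a strict margin'' fails precisely in the most relevant case, namely when $\partial_y\psi|_{y=0}\equiv 0$, where the Dirac term vanishes identically and provides no slack at all. Moreover, since $m(x,y)$ is not constant, mollification does not commute with $\mc L_2$, and the commutator $(m\psi)*\rho_\e - m\,(\psi*\rho_\e)$ is an $O(\e)$ error of uncontrolled sign that you would still need to absorb; to make this rigorous you would have to work with $\mc L_2+\lambda$ for $\lambda$ strictly below the generalized principal eigenvalue, which you do not say. Your second resolution (even reflection of the principal eigenfunction on the truncated half-ball $\O_R$, noting the Neumann condition makes the reflection automatically $C^1$ hence $W^{2,p}$ by elliptic regularity, then identifying the truncated eigenvalues on $\O_R$ and $B_R$ and letting $R\to\infty$) is correct and in fact cleaner than the paper's parabolic regularization; it proves the equality $\lambda_N=\lambda_{\text{gpe}}(\R^2)$ directly, from which the sub/supersolution equivalence follows by the criteria of \cite{BDR1,BR4}. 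In short: drop the mollification paragraph and lead with the truncated-eigenvalue argument, or adopt the paper's one-line parabolic smoothing.
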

\begin{proof}
We have explained before that the long-time behavior for~\eqref{OriginalCCmf}
can be reduced to the one for~\eqref{no road mf}, that is, the first statement of the lemma
holds.

For the second statement, consider a positive supersolution $\omega$  of \eqref{stat plan}. 
We have~$\mc L_2(\omega)  \leq 0$ on $\R^2$, and this inequality also holds true for $\t \omega(x,y) := \omega(x,-y)$. Hence, the function $\psi := \omega + \t \omega$ satisfies  $\mc L_{2}(\psi) \leq 0$ on $\R\times \R^+$ and $\partial_{y}\psi\vert_{y=0} = 0$, i.e., $\psi$ is a positive supersolution for \eqref{lsmfn}.

Take now a positive supersolution $\psi$ of \eqref{lsmfn}, that is, 
$\mc L_2(\psi) \leq 0$ on $\R\times \R^+$ and $\partial_y \psi\vert_{y=0}\leq 0$
on $\R$. One would like to use the function $\psi(x,\vert y \vert)$ as a supersolution for~\eqref{stat plan}, however this function is not in $W^{2,p}(\R^2)$. To overcome this difficulty, define $\t \psi(t,x,y)$ to be the solution of the parabolic problem
\begin{equation*}
%\left\{
%\begin{array}{rlc}
\partial_t \t \psi = \mc L_2(\t\psi),\quad  t>0,\ (x,y)\in \R^2,
%\partial_{y}v &= 0, \quad  &t>0,\ (x,y)\in \R\times\R^+.
%\end{array}
%\right.
\end{equation*}
with initial datum $\psi(x,\vert y\vert)$. The function $\t \psi(1,x,y)$ is positive and is in $W^{2,p}(\R^2)$. Moreover the parabolic comparison principle yields $\partial_t \t \psi \leq 0$, hence the function $\omega(x,y) := \t \psi(1,x,y)$ satisfies $\mc L_2(\omega) \leq 0$ on $\R^2$, i.e., it is a positive supersolution to \eqref{OriginalCCmf}.
\end{proof}
The second part of Lemma \ref{lemma lambda N} applied to $\mc L_2 + \lambda$ implies that the operator $\mc L_2$ set on $\R\times\R+$ with Neumann boundary conditions and the operator $\mc L_2$ set on $\R^2$ share the same generalized principal eigenvalue, that is:
%\eqref{stat plan} and \eqref{lsmfn} share the same generalized principal eigenvalue, that is:
$$
\lambda_{N}  = \sup\Big\{ \lambda \in \mathbb{R} \ : \ \exists  \psi \geq 0,\ \psi\not\equiv 0  
 \text{ such that } \ 
 (\mc L_{2}(\psi)+\lambda \psi ) \leq 0 \ \text{ on }\ \R^2 \Big\}.
$$

\subsection{The long-time behavior for the system with the road}\label{proof persistence}
%\subsection{Long-time behavior and generalized principal eigenvalue}\label{proof persistence}

This section is dedicated to proving Theorem \ref{persistence}. We first derive in Section \ref{SectionLiouville} a Liouville-type result, namely we show that there is at most one non-negative, not identically equal to zero, bounded stationary solution of the semilinear system \eqref{mf}. This will be used in Section~\ref{SectionExistence} to characterize the asymptotic behavior of solutions of the evolution problem \eqref{mf} in terms of the generalized principal eigenvalue $\l$.

	For future convenience, let us state  the parabolic strong comparison principle 
	 for the road-field 
	system~\eqref{mf}. This is derived in \cite[Proposition 3.2]{BRR1} with~$f$ independent
	of $x,y$, but the proof does not change if one adds this dependence.
	
	We say that a pair $(u,v)$ is a supersolution (resp. subsolution) of \eqref{mf} if it solves \eqref{mf} with all the signs $=$ replaced by $\geq$ (resp. $\leq$).
	\begin{prop}\label{pro:cp}
		Let $(u_1,v_1)$ and $(u_2,v_2)$ be respectively a bounded sub and supersolution  
		of~\eqref{mf} such that  $(u_1,v_1)\leq(u_2,v_2)$ at time $t=0$.
		Then $(u_1,v_1)\leq(u_2,v_2)$ for all $t>0$, and the inequality is strict unless they 
		coincide until some $t>0$.	
	\end{prop}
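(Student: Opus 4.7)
The plan is to proceed in two stages: first establish the weak comparison $(u_1,v_1)\leq(u_2,v_2)$ for all $t>0$, then upgrade it to a strong (strict) comparison. The preliminary move is to reduce to a linear problem: since $f$ is globally Lipschitz in $v$, setting $U=u_2-u_1$ and $V=v_2-v_1$, one can write $f(x,y,v_2)-f(x,y,v_1)=b(t,x,y)V$ with $b\in L^\infty$, so that $(U,V)$ is a bounded supersolution of the \emph{linear} system
\begin{equation*}
\begin{cases}
\partial_t U - D\partial_{xx}U - c\partial_x U + \mu U - \nu V|_{y=0} \geq 0, \\
\partial_t V - d\Delta V - c\partial_x V - bV \geq 0, \\
-d\partial_y V|_{y=0} - \mu U + \nu V|_{y=0} \geq 0,
\end{cases}
\end{equation*}
with $(U,V)|_{t=0}\geq 0$, and the claim amounts to showing $(U,V)\geq 0$ for $t>0$.

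For the weak comparison, I would perform the standard substitution $(\tilde U,\tilde V):=e^{-Kt}(U,V)$ and choose $K>\|b\|_\infty$, so that both zeroth-order terms in the resulting equations become strictly positive (the coefficient of $\tilde U$ becomes $K+\mu>0$, and that of $\tilde V$ becomes $K-b>0$). If the (bounded) infimum of $\tilde U$ or $\tilde V$ on $[0,T]\times\R$, resp.\ $[0,T]\times\R\times\R^+$, were negative, one would reach a contradiction with the parabolic weak maximum principle. Because the domain is unbounded, attainment of the infimum is not automatic, so I would add a perturbation $\varepsilon \phi(x,y)$ with $\phi(x,y)=\cosh(\alpha x)\cosh(\alpha y)$ (or any analogous function growing uniformly at infinity), chosen so that $\phi$ is still a strict supersolution of the relevant linear operators for $\alpha$ small. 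Since the perturbed functions tend to $+\infty$ at infinity and are bounded below, their negative infima are attained. At such an attained negative minimum one exploits the sign of the zeroth-order terms in the bulk and, crucially, the boundary condition at $y=0$: if the minimum of $\tilde V$ is attained at $y=0$, Hopf forces $\partial_y\tilde V|_{y=0}\leq 0$, and combined with the exchange inequality $\nu \tilde V|_{y=0}-d\partial_y\tilde V|_{y=0}\geq\mu\tilde U$, this is incompatible with $\tilde V|_{y=0}<0\leq\tilde U$; the symmetric situation for $\tilde U$ attaining a negative infimum at some $x_0$ is ruled out analogously. Sending $\varepsilon\to 0^+$ then yields $(\tilde U,\tilde V)\geq 0$, hence $(U,V)\geq 0$.

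The main obstacle is exactly this last coupling through $\{y=0\}$: the two equations live on domains of different dimensions and one has to match signs at a would-be negative extremum which could sit on either component. Handling this requires the barrier $\phi$ to be consistent on the road and in the field, and ensuring simultaneously that $K$ (the time weight) makes the boundary combination behave correctly; this is the step that takes care.

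For the strong comparison, suppose now $(U,V)\geq 0$ and that either $U$ or $V$ vanishes at some $(t_0,\cdot)$ with $t_0>0$. If $V(t_0,x_0,y_0)=0$ with $y_0>0$, the classical strong parabolic maximum principle applied to the linear inequality satisfied by $V$ in $\R\times\R^+$ forces $V\equiv 0$ on $(0,t_0]\times\R\times\R^+$ (propagation in the horizontal slab). If instead the vanishing occurs at $(t_0,x_0,0)$, the Hopf lemma on the half-plane yields either $V\equiv 0$ locally or $\partial_y V|_{y=0}(t_0,x_0)<0$; in the latter case the exchange condition $-d\partial_y V|_{y=0}\geq \mu U-\nu V|_{y=0}$ gives $U(t_0,x_0)>0$. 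Finally, once $V\equiv 0$ in the field, the third inequality reduces to $\mu U\leq 0$, hence $U\equiv 0$; and a one-dimensional strong maximum principle applied to the first equation yields $U\equiv 0$ for all $t\leq t_0$ as well. Symmetrically, starting from a zero of $U$ one uses the coupling to force a zero of $V$ on $\{y=0\}$ and runs the same propagation. This shows that $(u_1,v_1)\equiv(u_2,v_2)$ on $(0,t_0]$ as soon as the two pairs touch anywhere, establishing strict comparison otherwise.
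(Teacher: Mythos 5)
The paper itself does not supply a proof here; it defers to \cite[Proposition~3.2]{BRR1}, noting that the argument is unchanged by the $(x,y)$-dependence of $f$. Your overall plan --- pass to $(U,V)=(u_2-u_1,v_2-v_1)$, linearize via the Lipschitz property, rescale by $e^{-Kt}$ to make the zeroth-order coefficients positive, add a spatially growing barrier to force attainment of the infimum, and derive a contradiction at an attained negative minimum --- is the right one and matches what one finds in \cite{BRR1}. But there is a recurring sign error and, more seriously, the step where you close the argument at the boundary does not close. On the sign: at a minimum on $\{y=0\}$ of a function defined on $\R\times\R^+$, the \emph{inward} derivative $\partial_y$ satisfies $\partial_y\geq 0$, and $\partial_y>0$ by Hopf if the function is not locally constant; you write $\leq0$ and $<0$. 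Corrected, your strong-comparison step actually becomes cleaner: if $V\geq0$ vanishes at a boundary point with $V\not\equiv0$ nearby, Hopf gives $\partial_y V>0$ there, and plugging $V|_{y=0}=0$ into $-d\partial_y V\geq\mu U-\nu V|_{y=0}$ yields $\mu U\leq -d\partial_y V<0$, contradicting $U\geq0$.

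The genuine gap is in the weak comparison. Your case analysis implicitly assumes the other component is nonnegative when a negative minimum sits on the boundary (``incompatible with $\tilde V|_{y=0}<0\leq\tilde U$''), but a priori both $\tilde U$ and $\tilde V$ may dip below zero, and then the boundary inequality $\mu\tilde U\leq\nu\tilde V|_{y=0}-d\partial_y\tilde V|_{y=0}\leq\nu\tilde V|_{y=0}$ and the road inequality $(K+\mu)\tilde U\geq\nu\tilde V|_{y=0}$ (each holding at the respective minimum) simply pass the sign problem back and forth between the two unknowns. What is needed is a \emph{weighted} minimum: set $m:=\inf\min\{\tilde U,\gamma\tilde V\}$ after the perturbation, and examine where $m<0$ is attained. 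If by $\gamma\tilde V$ at an interior point of the field, the equation gives $(K-b)\tilde V\geq 0$, a contradiction since $K>\|b\|_\infty$. If by $\gamma\tilde V$ at $y=0$, then $\tilde U\geq\gamma\tilde V|_{y=0}$ combined with $\mu\tilde U\leq\nu\tilde V|_{y=0}$ forces $\gamma\geq\nu/\mu$. If by $\tilde U$, then $\tilde U\leq\gamma\tilde V|_{y=0}$ combined with $(K+\mu)\tilde U\geq\nu\tilde V|_{y=0}$ forces $\gamma\leq\nu/(K+\mu)$. Since $K>0$, choosing $\gamma\in\bigl(\nu/(K+\mu),\,\nu/\mu\bigr)$ rules out every case. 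This same weight $\gamma$ is exactly what is needed to make your barrier consistent between road and field (the road barrier should be $\gamma$ times the trace of the field barrier so the exchange inequality remains a supersolution inequality when $\mu\neq\nu$), a point you flag but do not resolve.
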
	
	\begin{req}\label{req:cp}
		The previous comparison principle applies in particular to stationary 
		sub and supersolutions, providing the strong comparison principle for the elliptic system associated to \eqref{mf}. Namely, if a stationary subsolution touches
		from below a stationary supersolution then they must coincide everywhere.
		%This result holds true if one considers the restriction
		%of~\eqref{mf} to a domain of the form %$\R\setminus\overline{I}_R$, 
		%$(\R\times\R^+)\setminus\overline\O_R$.
	\end{req}
%%%%%%%%%%%%%%%%%%%%%%%%%%%%%%%%%%%%%%%%%%%%%%%%%%%

%	Let us state for future convenience the weak and strong 
%	comparison principles for the road-field 
%	system~\eqref{mf}. This is derived in \cite[Proposition 3.2]{BRR1} with $f$ independent
%	of $x,y$, but the proof does not change if one adds this dependance.
%	\begin{prop}\label{pro:cp}
%		Let $(u_1,v_1)$ and $(u_2,v_2)$ be respectively a bounded sub and supersolution  
%		of~\eqref{mf} such that  $(u_1,v_1)\leq(u_2,v_2)$ at time $t=0$.
%		Then $(u_1,v_1)\leq(u_2,v_2)$ for all $t>0$, and the inequality is strict unless they 
%		coincide until some $t>0$.	
%	\end{prop}	
%	\begin{req}\label{req:cp}
%		The previous strong maximum principle applies in particular to stationary 
%		sub and supersolutions. Namely, if a stationary subsolution touches
%		from below a stationary supersolution then they must coincide everywhere.
%		This result holds true if one considers the restriction
%		of~\eqref{mf} to a domain of the form $\R\setminus\overline{I}_R$, 
%		$(\R\times\R^+)\setminus\overline\O_R$.
%	\end{req}

%%%%%%%%%%%%%%%%%%%%%%%%%%%%%%%%%%%%%%%%%%%%%%%%%%%

\subsubsection{A Liouville-type result}\label{SectionLiouville}

We derive here the uniqueness of stationary solutions for \eqref{mf}.

\begin{prop}\label{Liouville}
There is at most one non-null bounded positive stationary  solution of \eqref{mf}.
\end{prop}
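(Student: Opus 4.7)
The plan is a sliding argument that plays the strict Fisher--KPP monotonicity \eqref{KPPCC} against the strong comparison principle for road--field systems (Proposition~\ref{pro:cp} and Remark~\ref{req:cp}). Let $(u_1,v_1)$ and $(u_2,v_2)$ be two non-null bounded positive stationary solutions of \eqref{mf}. Comparing each pair with the zero solution, Remark~\ref{req:cp} forces strict positivity $u_i>0$ on $\R$ and $v_i>0$ on $\R\times\overline{\R^+}$. Combining \eqref{BFZ} with a standard exponential barrier outside a large ball gives the uniform decay $u_i(x)\to 0$ as $|x|\to\infty$ and $v_i(x,y)\to 0$ as $|(x,y)|\to\infty$. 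Introduce
$$
\kappa^\ast:=\sup\bigl\{\kappa\geq 0\,:\,\kappa u_1\leq u_2\text{ on }\R,\ \kappa v_1\leq v_2\text{ on }\R\times\R^+\bigr\}.
$$
Proving $\kappa^\ast\geq 1$ suffices, as swapping the roles of $1$ and $2$ then yields $(u_1,v_1)=(u_2,v_2)$.

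I first establish $\kappa^\ast>0$. On every fixed ball, continuity together with the strict positivity of all four functions gives a uniform positive lower bound for $v_2/v_1$ and $u_2/u_1$. Outside a large ball $B_R$, the KPP inequality and \eqref{BFZ} make each $v_i$ a positive sub-solution of $-d\Delta-c\partial_x-m$, whose zero-order coefficient is bounded below by some $\delta>0$; similarly each $v_i$ is a positive super-solution of a scalar linear operator with large bounded lower-order term. In the spirit of the barrier arguments of \cite{BR4}, one then constructs a common pair of exponential sub- and super-solutions of the full linearized coupled system respecting the Robin-type exchange and matching $(u_i,v_i)$ on $\partial B_R\cup(\partial I_R\times\{0\})$. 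Comparison transfers these bounds to $(u_i,v_i)$ themselves and produces matching upper and lower exponential decay rates, forcing $\inf(v_2/v_1)>0$ and $\inf(u_2/u_1)>0$, hence $\kappa^\ast>0$.

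Suppose for contradiction $\kappa^\ast<1$, and set $Z:=u_2-\kappa^\ast u_1\geq 0$ and $W:=v_2-\kappa^\ast v_1\geq 0$. Since $v_1>0$ and $\kappa^\ast\in(0,1)$, the strict monotonicity of $v\mapsto f(x,y,v)/v$ yields $\kappa^\ast f(x,y,v_1)<f(x,y,\kappa^\ast v_1)$, so
$$
-d\Delta W-c\partial_x W \;=\; f(x,y,v_2)-\kappa^\ast f(x,y,v_1) \;>\; f(x,y,v_2)-f(x,y,\kappa^\ast v_1) \;=\; a(x,y)\,W,
$$
with $a(x,y):=\int_0^1 f_v(x,y,\kappa^\ast v_1+sW)\,ds$ bounded by the Lipschitz character of $f$ in $v$. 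Parallel manipulations on the road and the exchange relation show that $(Z,W)$ is a non-negative supersolution of a linearized road--field system with bounded coefficients, strict in the $W$-equation. Remark~\ref{req:cp} applied to $(Z,W)$ against the zero pair thus forces $(Z,W)$ to be \emph{strictly} positive on the interior of its domain. Repeating the decay analysis on $(Z,W)$, which inherits decay at infinity from $u_2,v_2$, produces $\eta>0$ with $Z\geq \eta\, u_1$ and $W\geq\eta\, v_1$; but then $(\kappa^\ast+\eta)(u_1,v_1)\leq(u_2,v_2)$, contradicting the maximality of $\kappa^\ast$.

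The main obstacle is the decay/comparison analysis underlying $\kappa^\ast>0$ and its re-use on $(Z,W)$: the nonlocal Robin-type exchange between the line and the half-plane prevents the direct use of scalar Phragm\'en--Lindel\"of arguments, so the exponential sub- and super-solutions must be built for the coupled system and finely tuned to the linearization at infinity so that the upper and lower decay rates match. Once this is in place, the sliding step uses only standard KPP and strong-comparison ingredients.
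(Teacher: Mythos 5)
Your overall architecture is a sliding argument, which is indeed the right class of methods, and the second half of your proof (the contradiction from $\kappa^\ast<1$ via the strict KPP inequality and the strong comparison principle) is sound. However, the crucial preliminary step — establishing $\kappa^\ast>0$, and its re-use to extract $\eta>0$ from $(Z,W)$ — is a genuine gap, not a technicality.

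To get $\inf(v_2/v_1)>0$ you must rule out that $v_1$ decays strictly slower at infinity than $v_2$. Outside the niche the operator $-d\Delta-c\partial_x-m$ admits positive solutions with a whole range of exponential decay rates, and for the coupled road--field system the exchange condition further entangles the rates of $u_i$ and $v_i|_{y=0}$. Your phrase ``comparison transfers these bounds \dots and produces matching upper and lower exponential decay rates'' is exactly the conclusion of a Phragm\'en--Lindel\"of / unique--decay--rate theorem for the coupled system, and you offer no route to it; you acknowledge yourself that this is ``the main obstacle.'' Without it the whole chain $\kappa^\ast>0$, and then $Z\geq\eta u_1$, $W\geq\eta v_1$, is unsupported.

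The paper sidesteps this difficulty entirely. Rather than comparing $(u_2,v_2)$ directly against $\kappa(u_1,v_1)$, it shifts one solution by a constant, setting $(u_\e,v_\e):=(u+\e\tfrac{\nu}{\mu},\,v+\e)$, and defines $\theta_\e:=\max\{\theta>0:(u_\e,v_\e)\geq\theta(\tilde u,\tilde v)\}$. Since $u_\e\geq\e\tfrac{\nu}{\mu}>0$ and $v_\e\geq\e>0$ uniformly while $(\tilde u,\tilde v)$ is bounded, one gets $\theta_\e>0$ for free, no decay-rate comparison required. The same bounded-below property forces any contact point to lie in a fixed compact set (since $\theta_\e\tilde u\to0$ at infinity but $u_\e\geq\e\nu/\mu$), which is needed to invoke the strong comparison principle. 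Sending $\e\to0$, using the decay result of Lemma~\ref{dec}, then gives $\theta_0\geq1$ by exactly the kind of KPP contradiction you sketch. In short: the $\e$-shift is the missing idea that makes the sliding argument close without any Phragm\'en--Lindel\"of input; I'd suggest adopting it.
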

%%% Dire la regularitï¿½ ?
Before turning to the proof of Proposition \ref{Liouville}, we state a technical lemma.

\begin{lemma}\label{dec}
Let $(u,v)$ be a solution of the evolution problem~\eqref{mf} arising from a bounded non-negative initial datum. Then
$$
\sup_{\substack{\vert x \vert \geq R\\ t \geq R} }u(t,x) \underset{R \to +\infty}{\longrightarrow}0 \ \text{ and } \  \sup_{\substack{\vert (x,y) \vert \geq R\\ t \geq R} }v(t,x,y)\underset{R \to +\infty}{\longrightarrow}0.
$$
%$$
%\limsup_{\substack{t \to +\infty \\ \vert x \vert \to +\infty }} u(t,x)= 0 \ \text{ and } \ \limsup_{\substack{t \to +\infty \\ \vert (x,y) \vert \to +\infty }} v(t,x,y)=0 .
%$$
\end{lemma}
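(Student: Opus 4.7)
The plan is to establish the lemma in two steps: a global $L^\infty$ bound by comparison, followed by a blow-up / strong maximum principle argument that rules out any nontrivial limit of space-time shifts ``at infinity''.

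For the global bound, I would set $\bar M:=\max\bigl(S,\,\|v_0\|_\infty,\,\tfrac{\mu}{\nu}\|u_0\|_\infty\bigr)$ with $S$ from~\eqref{sat}. A direct computation shows that the constant pair $(\tfrac{\nu}{\mu}\bar M,\bar M)$ is a stationary supersolution of~\eqref{mf}: the road equation and the exchange condition at $y=0$ both reduce to $\nu\bar M-\mu\cdot\tfrac{\nu}{\mu}\bar M=0$, while $-f(\cdot,\cdot,\bar M)\geq 0$ by~\eqref{sat}. Proposition~\ref{pro:cp} then yields $(u,v)\leq(\tfrac{\nu}{\mu}\bar M,\bar M)$ for all $t\geq 0$.

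Suppose, aiming at a contradiction, that the $v$-part fails: there exist $\epsilon>0$ and sequences $t_n\to+\infty$, $|(x_n,y_n)|\to+\infty$, with $v(t_n,x_n,y_n)\geq\epsilon$. I would shift, setting $u_n(t,x):=u(t+t_n,x+x_n)$ and $v_n(t,x,y):=v(t+t_n,x+x_n,y+y_n)$. Combined with the $L^\infty$ bound, the Lipschitz regularity of $f$, and the interior/boundary parabolic Schauder estimates available for the coupled road-field system (cf.~\cite{BRR1}), the sequence is relatively compact in $C^{1+\alpha/2,2+\alpha}_{loc}$. Up to extraction, the limit $(u_\infty,v_\infty)$ lives either on $\R\times\R\times[-y_\infty,+\infty)$ with the Robin-type exchange condition at $y=-y_\infty$ (if $y_n$ stays bounded), or on $\R\times\R^2$ with no road (if $y_n\to+\infty$). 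Since $|(x_n,y_n)|\to+\infty$, assumption~\eqref{BFZ} together with the KPP bound $f(x,y,v)\leq m(x,y)v$ provides $\delta>0$ such that in the limit
\[
\partial_t v_\infty-d\Delta v_\infty-c\partial_x v_\infty\leq -\delta\,v_\infty,
\]
while $v_\infty(0,0,0)\geq\epsilon$ and $(u_\infty,v_\infty)$ is bounded by $(\tfrac{\nu}{\mu}\bar M,\bar M)$.

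To eliminate such a limit, set $\eta:=\sup v_\infty\geq\epsilon>0$. A scalar dissipative Liouville applied to the equation $\partial_t u_\infty-D\partial_{xx}u_\infty-c\partial_x u_\infty+\mu u_\infty=\nu v_\infty|_{y=0}\leq\nu\eta$ (using that $\tfrac{\nu}{\mu}\eta$ is a constant supersolution) yields the refined bound $\sup u_\infty\leq\tfrac{\nu}{\mu}\eta$. I would then take a sequence along which $v_\infty$ approaches $\eta$, perform a second space-time shift, and extract a further limit $v_\infty^{\ast}$ that \emph{attains} the value $\eta$ at some point $p$. If $p$ is interior, $\partial_t v_\infty^{\ast}(p)=0$ and $\Delta v_\infty^{\ast}(p)\leq 0$, so the differential inequality at $p$ yields $0\leq-\delta\eta$, whence $\eta=0$. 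If $p=(x_0,0)$ lies on the road, the exchange condition combined with $\mu u_\infty^{\ast}(x_0)\leq\nu\eta$ forces $\partial_y v_\infty^{\ast}(p)=0$, hence $\partial_{yy}v_\infty^{\ast}(p)\leq 0$ and again $\Delta v_\infty^{\ast}(p)\leq 0$; the same pointwise inequality gives $\eta=0$. In either case we contradict $v_\infty(0,0,0)\geq\epsilon$. The analogous argument, with a shift along the road, handles the $u$-claim. The main delicate point is precisely the boundary case: without the refined bound $\sup u_\infty\leq\tfrac{\nu}{\mu}\eta$, a large boundary value of $u_\infty^{\ast}$ could sustain a boundary maximum of $v_\infty^{\ast}$ at height $\eta$ with $\partial_y v_\infty^{\ast}<0$, and the closure of the Liouville step would fail.
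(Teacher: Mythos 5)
Your proof is correct, but it takes a genuinely different route from the paper. The paper also begins with the contradiction assumption and extraction of a limit along diverging space-time shifts, splitting into the cases $y_n\to\infty$ (whole-plane limit) and $y_n$ bounded (half-plane with road). But where you then invoke a sliding/doubling argument (second shift to make the limiting $v_\infty$ \emph{attain} its sup, followed by a pointwise Hopf-type contradiction at the maximum, and the dissipative Liouville $\sup u_\infty\leq\frac{\nu}{\mu}\sup v_\infty$ to close the boundary case), the paper instead builds \emph{explicit} time-decaying supersolutions and runs the parabolic comparison principle forward from arbitrarily negative initial times: in the first case the barrier is the space-independent $Ae^{-Kt}$, and in the second case the barrier is the pair $A\bigl(e^{-\eta t},\,\gamma e^{-\eta t}(e^{-\beta y}+1)\bigr)$, whose parameters $\eta,\gamma,\beta$ are tuned to dominate the coupled road-field system with the dissipative zero-order term. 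This barrier is precisely what replaces your refined Liouville bound on $u_\infty$; the two mechanisms close the same delicate boundary issue, which you correctly identify. For the $u$-part the paper also proceeds differently: rather than re-running the shift argument, it exploits the already proved decay of $v$, reading off from the exchange condition that a bounded-away-from-zero $u(t_n,x_n)$ forces $\partial_y v(t_n,x_n,0)\leq-\frac{\mu}{2d}\e$, and then a second-order Taylor expansion in $y$ (controlled by parabolic estimates) drives $v$ negative near the road — a contradiction. Your route for $u$ (shift, get $v_\infty\equiv0$ from the first part, then Liouville or the exchange condition to kill $u_\infty$) is equally valid and arguably shorter. Overall, your argument is somewhat more ``soft'' (Liouville + maximum-at-infinity) while the paper's is more ``hard'' (explicit barriers); both are complete.
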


%It is readily seen that this lemma applies in particular to bounded non-negative stationary solutions of  \eqref{mf}. 

\begin{proof}
We first show that the conclusion of the lemma holds for the component $v$. 
Assume by contradiction that there are $\e>0$ and two diverging sequences, $(t_n)_{n \in \N}$ 
in $(0,+\infty)$ and $( (x_n , y_n))_{\in \N}$ in $\R\times (0,+\infty)$, such that
$$
\liminf_{n \to +\infty}v(t_n , x_n, y_n) \geq \e.
$$
The idea is to consider the equations satisfied by some translations of $u,v$. We divide the discussion into two different cases.

%We define the translated functions
%$$
%u_n  := u(\cdot +t_n , \cdot +x_n)\  \text{ and } \ v_{n}  := v(\cdot+t_n , \cdot +x_n , \cdot +y_n).
%$$
%The idea is to take the limit $n\to +\infty$ and to consider the limiting equations. 
%%
%The important observation is that, because $((x_n , y_n))_{n\in \mathbb{N}}$ diverges, the the zone where $f$ is positive disappears in the limit. More precisely, \eqref{BFZ} and \eqref{KPPCC} yields that there is $M>0$ such that, for any compact set $\mc K \subset \R \times [0,+\infty)$, there holds
%\begin{equation}\label{f limit}
%\sup_{(x,y) \in \mc K} \left( \limsup_{n \to +\infty} f(x+x_{n},y+y_{n},z)\right) \leq -M z \quad \text{for}\  z \geq 0.
%\end{equation}
%
%
%We consider separately two cases.

\medskip
 \emph{First case: $( y_{n})_{n \in \N}$ is unbounded.}\\
Up to extraction of a subsequence, we assume that $y_{n}$ goes to $+\infty$ as $n$ goes to $+\infty$. 
We define the translated functions
$$
u_n  := u(\cdot +t_n , \cdot +x_n)\  \text{ and } \ v_{n}  := v(\cdot+t_n , \cdot +x_n , \cdot +y_n).
$$
Because $((x_n , y_n))_{n\in \mathbb{N}}$ diverges, the zone where $f$ is positive disappears in the limit. More precisely, \eqref{BFZ} and \eqref{KPPCC} yield that there is $K>0$ such that
\begin{equation}\label{f limit}
\limsup_{n \to +\infty} f(x+x_{n},y+y_{n},z)\leq -K z \quad 
\text{for}\  (x,y)\in\R^2,\ z \geq 0.
\end{equation}
%
%
%In this case, the road is "sent to minus infinity" and disappears in the limit.
The parabolic estimates (see, for instance, \cite[Theorems 5.2 and 5.3]{La}) and \eqref{f limit} yield that, up to another extraction, $v_{n}$ converges locally uniformly to some $v_{\infty}$, entire (i.e., defined for all $t\in \R$) subsolution of
\begin{equation}\label{eq subsol}
\partial_{t}v -d\Delta v-c\partial_{x}v +Kv = 0, \quad t \in\R , \ (x,y) \in \mathbb{R}^{2}.
\end{equation}
Moreover,
$$
v_{\infty}(0,0,0) \geq \e.
$$
Observe that, for any $A>0$, the space-independent function $w(t) := Ae^{-K t}$ is a solution of \eqref{eq subsol}. Because $v_{\infty}$ is bounded, we can choose $A$ large enough so that, for every $\tau\geq0$,
$$
v_{\infty}( -\tau, \cdot, \cdot)\leq w(0) . 
$$
The parabolic comparison principle for \eqref{eq subsol} yields
$$
 v_{\infty}(t - \tau ,\cdot , \cdot) \leq  Ae^{-K t} \quad \text{ for } \ t \geq 0, \ \tau \in \R.
$$
Choosing $t=\tau$, we get
$$
\e \leq v_{\infty}(0,0,0) \leq A e^{-K \tau}.
$$
Taking the limit $\tau \to +\infty$ yields a contradiction.

\medskip
\emph{Second case: $( y_{n} )_{n\in \N}$ is bounded.} \\
Up to a subsequence, we assume that $y_n$ converges to some $y_{\infty}\geq 0$
as $n$ goes to $+\infty$.  
We now define the translated functions with respect to $(t_n)_{n\in\N}$, $(x_n)_{n\in\N}$ only:
$$
u_n  := u(\cdot +t_n , \cdot +x_n)\  \text{ and } \ v_{n}  := v(\cdot+t_n , \cdot +x_n , \cdot).
$$
Arguing as in the first case, we find that $((u_{n},v_{n}))_{n \in \N}$ converges locally uniformly (up to a subsequence) as $n$ goes to $+\infty$ to $(u_{\infty},v_{\infty})$, entire subsolution of
\begin{equation}\label{eq 2}
\left\{
\begin{array}{rll}
\partial_{t}u -D\partial_{xx}u - c\partial_{x}u&= \nu v\vert_{y=0} - \mu u, \quad &t \in \R, \ x \in  \mathbb{R}, \\
\partial_{t}v-d\Delta v -c\partial_{x}v+M v&= 0, \quad &t \in \R, \ (x,y) \in   \mathbb{R}\times \R^+, \\
-d\partial_{y}v\vert_{y=0} &= \mu u-\nu v\vert_{y=0}, \quad &t \in \R, \ x \in  \mathbb{R}.
\end{array}
\right.
\end{equation}
Moreover, we have
$$
v_{\infty}(0,0,y_\infty)\geq \e.
$$
Unlike the previous case, one does not have suitable space-independent solutions of \eqref{eq 2}. Instead, we look for a supersolution of the form
$$
A(e^{-\eta t}, \gamma e^{-\eta t}(e^{-\beta y} +1)).
$$
An easy computation shows that for this to be a supersolution of \eqref{eq 2} for any $A>0$,
it is sufficient to choose the parameters $\eta, \gamma ,\beta>0$ so that
%a{Il suffit de minorer le terme $-d\Delta v$?
%En tout cas si on a ces doutes il vaut mieux detailler}
%a{
%C'est $-\eta$ et non $-2\eta$. Je le repete: si tu as des doutes ca veut dire qu'il faut expliquer cette 
%inegalit\'e}
\begin{equation*}
\left\{
\begin{array}{rlr}
-\eta&\geq 2\nu \gamma  - \mu, \\
-\eta(e^{-\beta y}+1)-d\beta^{2}e^{-\beta y}+M(e^{-\beta y}+1) & \geq 0,\quad &\text{for all }\ y\geq 0,  \\
d\beta \gamma &\geq \mu -2\nu \gamma.
\end{array}
\right.
\end{equation*}
For $\eta<\mu$, we take $\gamma = \frac{\mu - \eta}{2\nu}>0, \ \beta = \frac{\eta}{d \gamma }$, so that the first and third inequalities are automatically satisfied. For the second inequality to hold true for $y\geq 0$, it is sufficient to have $-\eta-d\beta^2+M\geq0$. We can take $\eta$ 
sufficiently small so that the latter is fulfilled.
%Moreover, up to decreasing $\eta$ if need be, we can ensure that $-\eta-d\beta^2+M\geq0${dire un peu plus}, so that the second inequality is also satisfied.

We now choose $A$ large enough so that $A\geq\sup u_{\infty}$,  $\gamma A\geq\sup v_{\infty}$.
A contradiction is reached by arguing as in the first case, with the difference that now we need to use the parabolic comparison principle for the full road-field system, Proposition \ref{pro:cp}.

We have shown that
\Fi{vto0}
\sup_{\substack{\vert (x,y) \vert \geq R\\ t \geq R} }v(t,x,y)\underset{R \to +\infty}{\longrightarrow}0.
\Ff
Let us now derive the result for $u$.
Assume by contradiction that there are $\e>0$ and two diverging sequences $(t_n)_{n\in \N}$ and $(x_n)_{n\in \N}$
such that
$$
\liminf_{n\to+\infty}u(t_n,x_n)>\e.
$$
Then, because of~\eqref{vto0}, for $n$ large enough
the third equation in \eqref{mf} gives us~that
$$
\partial_{y}v(t_n,x_n,0) \leq -\frac{\mu}{2d}\e.
$$
The parabolic estimates then provides a constant $C>0$ such that for, say, 
$y\in (0,1)$, there holds that
$$
v(t_n,x_n,y)\leq v_n(t_n,x_n,0) -\frac{\mu}{2d}\e y + Cy^2.
$$
From this, taking  $y>0$ small enough and then $n$ large enough, and using again~\eqref{vto0},
we deduce that $v(t_n,x_n,y)<0$, which is impossible, hence the contradiction.
\end{proof}
We now turn to the proof of Proposition \ref{Liouville}.

\begin{proof}[Proof of Proposition \ref{Liouville}] 
Let $(u, v)$ and $(\t u, \t v)$ be two non-null non-negative bounded stationary solutions of the system \eqref{mf}. We will prove that they coincide. We define, for $\e>0$, 
$$
(u_\e,v_\e) := (u+\e\frac{\nu}{\mu},v+\e),
$$
and
$$
\theta_{\e}:=\max\{\theta>0 \ : \ (u_\e,v_\e) \geq \theta(\t u,\t v)\},
$$
which is positive.
Let us show that one of the following occurs:
$$
\text{Either}\ \ \exists x'_{\e} \in \mathbb{R},
\ \; u_\e(x'_{\e})=\theta_\e \t u(x_\e)
\quad\text{or}\quad
\exists(x_{\e},y_{\e}) \in \mathbb{R}\times[0,+\infty),
\ \;v_\e(x_{\e},y_{\e})=\theta_\e \t v(x_\e,y_\e).
$$
By definition of $\theta_\e$, for every $n\in \N$, we can find either
$x'_n$ such that $u_\e(x'_n)< (\theta_\e + \frac{1}{n})\t u(x'_n)$, or
$(x_n,y_n)$ such that $v_{\e}(x_n,y_n)<(\theta_\e+\frac{1}{n})\t v(x_n,y_n)$. For every $\e >0$, the norm of these points is bounded independently of $n\in\N$, because $\t u$ and $\t v$ converge to zero at infinity, by Lemma \ref{dec}. Therefore, 
because either $(x'_n)_n$ or $((x_n,y_n))_n$ 
has an infinite number of elements, 
we can define $(x'_\e)$ or $(x_\e,y_\e)$ as the limit of a subsequence of 
either $(x'_n)_n$ or $((x_n,y_n))_n$ respectively.
%Indeed, by definition of $\theta_\e$, for every $n\in \N$, we can find either
%$x'_n$ such that $U^{\theta_\e+\frac{1}{n}}_{\e}(x'_n)<0$, or
%$(x_n,y_n)$ such that $V^{\theta_\e+\frac{1}{n}}_{\e}(x_n,y_n)<0$. The norm of these points is bounded independently of $n\in\N$, 
%because $(U^{\theta_\e+\frac{1}{n}}_{\e},U^{\theta_\e+\frac{1}{n}}_{\e})\geq 
%(U^{\theta_\e+1}_{\e},V^{\theta_\e+1}_{\e})$ which converges to $(\e\nu/\mu,\e)$ at infinity,
%by Lemma \ref{dec}.
%Therefore, 
%because either $(x'_n)_n$ or $((x_n,y_n))_n$ 
%have an infinite number of elements, 
%we can define $(x'_\e)$ or $(x_\e,y_\e)$ as the limit of a subsequence of 
%$(x'_n)_n$ or $((x_n,y_n))_n$ respectively.

By definition, the positive reals $(\theta_{\e})_{\e>0}$ are increasing with respect to~$\e$. We define
$$
\theta_{0} := \lim_{\e \searrow 0^+}\downarrow \theta_{\e}.
$$
The rest of the proof is dedicated to show that $\theta_{0} \geq 1$. This will yield that $u\geq  \t u$ and $v \geq  \t v$. Exchanging the roles of $(u, v)$ and $(\t u, \t v)$ in what precedes, we then get $u = \t u$ and $v = \t v$, hence uniqueness.

We argue by contradiction, assuming that $\theta_{0}<1$. From now on, we assume that $\e$ is small enough so that $\theta_{\e}<1$. We proceed in two steps: we first derive
some estimates for the contact points $x'_{\e}$ or $(x_{\e},y_{\e})$, then we use them to get 
a~contraction.

% To do so, we argue by contradiction and assume that $\theta_{0}<1$. From now on, we assume that $\e$ is small enough so that $\theta_{\e}<1$.

\medskip
\emph{Step 1. Boundedness of the contact points.}\\
This step is dedicated to show that there is $R>0$ such that
\begin{equation}\label{estepsilon}
\forall \e >0, \quad \vert x'_{\e} \vert \leq R\ \text{ or }\ \vert (x_{\e} , y_{\e}) \vert \leq  R,
\end{equation}
i.e., $\vert x'_{\e} \vert$ or $\vert (x_{\e},y_{\e})\vert $ are bounded independently of $\e$. From the uniform regularity of $f$ together with \eqref{BFZ}, we infer that there is $\eta>0$ so that $v \mapsto f(x,y,v)$ is non-increasing in $ [0,\eta]$ if $\vert (x,y) \vert \geq R$.
Because $(u, v)$ is a stationary solution of \eqref{mf}, Lemma \ref{dec} implies that $v(x,y)$ goes to zero as $\vert (x,y) \vert$ goes to $+\infty$. Hence, up to decreasing $\e$ so that $\e <\eta$ and up to increasing $R$, we assume that
$$
v(x,y) \leq \eta -\e \quad \text{ for }\  \vert(x,y)\vert \geq R.
$$
Hence, if $\vert (x,y) \vert \geq R$, we see that
$$
-d\Delta v_\e-c\partial_{x}v_\e - f(x,y,v_\e) \geq f(x,y,v)-f(x,y,v_\e)\geq 0.
$$
Therefore,
%$$
%(\overline{u}, \overline{v}) := (u+\e\frac{\nu}{\mu},v+\e),
%$$
%we find that
\begin{equation*}
\left\{
\begin{array}{rll}
-D\partial_{xx}u_\e - c\partial_{x}u_\e&= \nu v_\e\vert_{y=0}-\mu u_\e, \quad &\text{ for }\  \vert x \vert > R, \\
-d\Delta v_\e -c\partial_{x}v_\e &\geq f(x,y,v_\e), \quad  &\text{ for }\   \vert (x,y) \vert  > R, \\
-d\partial_{y}v_\e \vert_{y=0} &= \mu u_\e-\nu v_\e\vert_{y=0}, \quad  &\text{ for }\  \vert x \vert > R.
\end{array}
\right.
\end{equation*}
Moreover, because we assumed that $\theta_{\e}<1$, we have $\theta_{\e}f(x,y,\t v) \leq f(x,y,\theta_{\e} \t v)$. Hence, $(\theta_\e \t u, \theta_\e \t v)$ is a stationary subsolution of \eqref{mf}. Because $(u_\e, v_\e) \geq (\theta_\e \t u, \theta_\e \t v)$, the elliptic strong comparison principle (see Remark \ref{req:cp})
yields that, if the point at which we have either 
$u_{\e}(x'_{\e}) = \theta_\e \t u(x'_\e)$ or $v_{\e}(x_{\e} , y_{\e}) = \theta_\e \t v(x_{\e} , y_{\e})$ satisfied
$$
\vert x'_{\e} \vert > R\ \text{ or }\ \vert (x_{\e} , y_{\e}) \vert >  R,
$$
then we would have
$$
(u_{\e},v_{\e}) \equiv  \theta_\e ( \t u, \t v).
$$
This is impossible because $u(x)\to\e \frac{\nu}{\mu}$ and 
$\t u(x)\to 0$ as $\vert x \vert$ goes to $+\infty$. We have reached a contradiction, showing that \eqref{estepsilon} holds true.

\medskip
\emph{Step 2. Taking the limit $\e \to 0$.}\\
The estimate \eqref{estepsilon} implies that, up to extraction of a suitable subsequence, 
either $x'_{\e}$ or $(x_{\e},y_{\e})$ converge as $\e$ goes to zero
to some limit $x_{0} \in \mathbb{R}$ or $(x_{0}, y_{0}) \in \R\times[0,+\infty)$.
Hence
\begin{equation*}
u \geq \theta_{0} \t u,\quad
v \geq \theta_{0} \t v,
\end{equation*}
and either $u(x_{0}) =\theta_{0} \t u(x_{0})$ or $v(x_{0},y_{0})=\theta_{0} \t v(x_{0},y_{0})$.  Because $(u, v)>(0,0)$, owing to the elliptic strong comparison principle (cf. Remark \ref{req:cp}), this yields $\theta_{0}>0$. As before, we can use the elliptic strong comparison principle for \eqref{mf} (see Remark \ref{req:cp}) with the solution $(u,v)$ and the subsolution $\theta_0(\t u, \t v)$ to find that these couples coincide everywhere, namely
%
%$(u-\theta_{0} \t u,v-\theta_{0} \t v)$ is a non-negative supersolution of an elliptic system with bounded coefficients, hence the elliptic strong maximum principle implies that these functions are everywhere equal to zero. Namely,
$$
u \equiv \theta_{0} \t u\ \text{ and }\ v \equiv \theta_{0} \t v.
$$ 
Plotting the latter in \eqref{mf} we obtain
$$
\theta_{0}f(x,y,\t v) = f(x,y,\theta_{0}\t v) \quad \text{ for }\ (x,y)\in \mathbb{R}\times\mathbb{R}^{+}.
$$
Recalling that we assumed that $\theta_{0}<1$, we find a contradiction with the
hypothesis~\eqref{KPPCC}. 
This shows that $\theta_0\geq 1$, which concludes the proof.
\end{proof}

\subsubsection{The persistence/extinction dichotomy}\label{SectionExistence}

We proved in the previous section that there is at most one non-trivial bounded positive stationary solution of the semilinear problem \eqref{mf}. Building on that, we prove now Theorem \ref{persistence}.

\begin{proof}[Proof of Theorem \ref{persistence}.]
In the whole proof, $(u,v)$ denotes the solution of the parabolic problem \eqref{mf} arising from a non-negative not identically equal to zero compactly supported initial datum $(u_{0},v_{0})$. We prove separately the two statements of the Theorem.

\medskip
\emph{Statement $(i)$.} \\
Assume that $\lambda_{1}<0$. Owing to Proposition \ref{limit}, we can take $R>0$ large enough so that $\lambda_{1}^{R}<0$. Let $(\phi_{R},\psi_{R})$ be the corresponding
principal eigenfunction
%\st{solution of the problem ... set on $\Omega_{R}$ associated with the eigenvalue 
%$\lambda_{1}^{R}$}
provided by Proposition~\ref{limit}. Using the fact the $u(1,\cdot)>0$ and $v(1,\cdot,\cdot)>0$,
as a consequence of the parabolic comparison principle Proposition \ref{pro:cp},
and that $(\phi_{R},\psi_{R})$, extended by $(0,0)$ outside of its support, is compactly supported, we can  find $\e>0$ such that
\begin{equation*}
\e(\phi_{R},\psi_{R}) \leq (u(1,\cdot),v(1,\cdot,\cdot)).
\end{equation*}
%\begin{equation*}
%\e(\phi_{R},\psi_{R}) \leq (u(1,x),v(1,x)).
%\end{equation*}
%
%
Up to decreasing $\e$, the regularity hypotheses on $f$
combined with the fact that $\l^R<0$ implies that
$\e(\phi_{R},\psi_{R})$ (extended by $(0,0)$ outside its support) is 
a generalized stationary subsolution of \eqref{mf}. 
On the other hand, for $M>0$ sufficiently large, the pair 
$(\frac{\nu}{\mu}M,M)$ is a stationary supersolution of \eqref{mf}, due to 
hypothesis \eqref{sat}. 
Up to increasing $M$, we can assume that $(\frac{\nu}{\mu}M,M)>(u(1,\cdot),v(1,\cdot,\cdot))$.

As a standard application of the parabolic comparison principle, Proposition~\ref{pro:cp},
one sees that the solution of \eqref{mf} arising from $\e(\phi_{R},\psi_{R})$ (respectively from $(\frac{\nu}{\mu}M,M)$)  is time-increasing (respectively time-decreasing), and converges locally uniformly to a stationary solution, thanks to the parabolic estimates. Owing to the elliptic strong comparison principle (see Remark \ref{req:cp}),
this solution is positive. Proposition~\ref{Liouville} implies that these limiting solutions are actually equal, and by comparison~$(u,v)$ also converges to this positive stationary solution. This proves the statement $(i)$ of the theorem.

\medskip
\emph{Statement $(ii)$.} \\
Assume now that $\l \geq 0$. Let $(U,V)$ be a bounded non-negative stationary solution 
of~\eqref{mf}. We start to show that $(U,V) \equiv (0,0)$. 
We argue by contradiction, 
assuming that this stationary solution is not identically equal to zero. 
%owing to the elliptic strong maximum principle, see Remark \ref{req:cp}, it is strictly positive. 
Let $(\phi,\psi)$ be a positive generalized principal eigenfunction associated with $\lambda_{1}$, provided by Proposition~\ref{limit}. The fact that $\lambda_{1}\geq0$, combined with the
Fisher-KPP hypothesis \eqref{KPPCC}, implies that $(\phi,\psi)$ is a supersolution of \eqref{mf}. For $\e>0$, we define
%$$
%(\phi_\e,\psi_\e) := (\phi+\e\frac{\nu}{\mu},\psi+\e),
%$$
%and
$$
\theta_{\e} := \max \left\{   \theta>0 \  : \ (\phi,\psi)+\left(\e\frac{\nu}{\mu},\e\right) \geq \theta(U,V)  \right\}.
$$
%
%This is well defined because we are assuming that $(U,V)$ is positive. 
%
Hence, for $\e>0$, there is either $x'_{\e}\in \R$ or $(x_{\e},y_{\e})\in \R\times [0,+\infty)$ such that 
$$
\phi(x'_{\e}) + \e\frac{\nu}{\mu} = \theta_\e U(x'_{\e})\ \text{ or } \ \psi(x_{\e},y_{\e}) + \e = \theta_\e V(x_{\e},y_{\e}).
$$
Arguing as in the proof of Proposition \ref{Liouville}, Step 1, we find that the norm of the contact points $x'_{\e}$ or $(x_\e, y_\e)$ is bounded independently of $\e$. 
%This implies in turn that $\theta_{\e}$ is bounded independently of $\e$. 
Because $\theta_{\e}$ is increasing with respect to $\e$, it converges to a limit $\theta_{0}\geq0$ as~$\e$ goes to $0$. Up to extraction, we have that either $x'_{\e}$ or $(x_{\e} , y_{\e})$ converges to some $x'_{0} \in \mathbb{R}$ or $(x_{0},y_{0}) \in \R\times[0,+\infty)$ as $\e$ goes to zero. Taking the limit $\e \to 0$ then yields
$$
(\phi,\psi)  \geq \theta_0(U,V),
$$
and either $\phi(x'_{0})  = \theta_0 U(x'_{0})$ or $\psi(x_{0},y_{0})  = \theta_0 V(x_{0},y_{0})$. In both cases, the elliptic strong comparison principle, cf.~Remark \ref{req:cp}, implies that
$$
(\phi,\psi)  \equiv \theta_0 (U,V),
$$
Owing to the hypothesis \eqref{KPPCC} on $f$, this is possible only if $\theta_0 = 0$, but this would contradict the strict positivity of $(\phi, \psi)$.
%which is possible only if $\theta_{0} = 0$, because of the hypothesis \eqref{KPPCC} on $f$. Hence, either $\phi(x'_{0})  = 0$ or $\psi(x_{0},y_{0})  = 0$, which is impossible.
We have reached a contradiction: there are no non-negative non-null bounded stationary solutions of \eqref{mf} when $\lambda_{1}~\geq~0$.
%%% details des principes du max a chaque fois ?

We can now deduce that extinction occurs: for a given 
compactly supported initial datum $(u_{0},v_{0})$, we choose $M>0$ large enough so that the couple $(\frac{\nu}{\mu}M , M)$ is a stationary supersolution of \eqref{mf} and, in addition,
$$
\Big(\frac{\nu}{\mu}M , M\Big)\geq (u_{0},v_{0}).
$$
We let $(\ol u, \ol v)$ denote the solution of \eqref{mf} arising from the initial datum $(\frac{\nu}{\mu}M , M)$. It is time-decreasing and converges locally uniformly to a stationary solution. The only one being $(0,0)$, owing to Proposition~\ref{Liouville}, we infer that $(\ol u ,\ol v)$ converges to zero locally uniformly as $t$ goes to $+\infty$. Lemma \ref{dec} implies that the convergence is actually uniform. The same holds for $(u,v)$, thanks to the parabolic comparison principle, Proposition \ref{pro:cp}.
%
%
%To conclude the proof of the theorem, it is sufficient to show that this convergence to $(0,0)$ is actually uniform.
%
%To show this, we take a diverging sequence $(t_n)_{nï¿½\in \N}$, $t_n >0$ for $n \in \N$ and $((x_n, y_n))_{n \in \N}$, $(x_n, y_n) \in \R\times\R^+$ for $n\in \N$, so that
%$$
%\ol v(t_n, x_n, y_n) \geq \left(1-\frac{1}{n}\right)\sup \ol v(t_n, \cdot, \cdot).
%$$
%Then, if $((x_n,y_n))_{n \in \N}$ is bounded, the local uniform convergence to zero of $\ol v$ yields that $\sup \ol v(t_n, \cdot, \cdot)\to 0$ as $n$ goes to $+\infty$. If  $((x_n,y_n))_{n \in \N}$ is not bounded, Lemma \ref{dec} yields that $\ol v(t_n, x_n, y_n) \to 0$ as $n$ goes to $+\infty$, and then, we have again that $\sup \ol v(t_n, \cdot, \cdot) \to 0$ as $n$ goes to $+\infty$. Therefore, the convergence of $\ol v$ to zero as $t$ goes to $+\infty$ is uniform in $(x,y)$, and so is the convergence to zero of $v$, owing to the parabolic comparison principle. The result for $u$ follows by the same arguments.
\end{proof}

We conclude this section by stating the dichotomy analogous to~Theorem \ref{persistence} 
in the case of the system ``without the road", \eqref{no road mf}.

\begin{prop}\label{persistence no road}
Let $\lambda_{N}$ be the generalized principal eigenvalue of the system \eqref{lsmfn}. 

\begin{enumerate}[(i)]

\item If $\lambda_{N}<0$, the system \eqref{no road mf} admits a unique positive bounded stationary solution and persistence occurs.

\item If $\lambda_{N}\geq0$, the system \eqref{no road mf} does not admit any positive stationary solution, and extinction occurs.

\end{enumerate}
\end{prop}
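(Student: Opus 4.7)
The cleanest route is to reduce everything to the whole-plane problem \eqref{OriginalCCmf} and invoke the results of \cite{BRforcedspeed}. First I would extend $f$ by even reflection in $y$: set $\tilde f(x,y,v) := f(x,|y|,v)$ on $\R^2\times[0,+\infty)$, noting that the extended $f$ still satisfies \eqref{sat}, \eqref{KPPCC}, \eqref{BFZ} (the regularity on $\{y=0\}$ is fine for the purposes of comparison, since the reflected $v$-solutions satisfy zero Neumann traces there). Under this extension, solutions of \eqref{no road mf} correspond bijectively to $y$-even solutions of \eqref{OriginalCCmf}, both in the parabolic and in the stationary sense. In view of Lemma \ref{lemma lambda N} applied to $\mc L_2 + \lambda$, the eigenvalue $\lambda_N$ defined in \eqref{fgpe no road} coincides with the generalized principal eigenvalue of $-\mc L_2$ on the whole plane, so the sign of $\lambda_N$ is the same in both formulations.

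Next I would transfer the dichotomy. Proposition \ref{rappel CC}, as proved in \cite{BRforcedspeed}, is in fact established through the sign of the generalized principal eigenvalue of $-\mc L_2$ on $\R^2$: there is a unique bounded positive stationary solution of \eqref{OriginalCCmf}, to which every nontrivial nonnegative bounded solution converges locally uniformly, precisely when this eigenvalue is negative; otherwise every such solution decays uniformly to zero. Combining this with Lemma \ref{lemma lambda N} gives the dichotomy for \eqref{no road mf} in terms of $\lambda_N$. For statement $(i)$, uniqueness on the half-plane is inherited from uniqueness on $\R^2$: if $V$ is a bounded positive stationary solution of the extended whole-plane problem, so is $V(x,-y)$; by uniqueness these coincide, hence $V$ is even in $y$ and restricts to a solution of \eqref{no road mf}.

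As a fallback, if one prefers not to rely on the precise formulation of \cite{BRforcedspeed} in terms of $\lambda_N$, the entire proof of Theorem \ref{persistence} transfers nearly verbatim to this setting, with some simplifications because only a single equation is involved. Specifically, one would first prove a Liouville-type result analogous to Proposition \ref{Liouville} by the same contact-point argument applied to $(v+\varepsilon)$ versus a multiple of $\tilde v$, using a decay lemma for $v$ at spatial infinity (the analogue of Lemma \ref{dec}), which follows from \eqref{BFZ} by the same parabolic comparison with exponential barriers. Then, when $\lambda_N<0$, Proposition \ref{prop lambda n} provides, for $R$ large, a truncated principal eigenfunction $\psi_R$ with $\lambda_N^R<0$, and $\varepsilon\psi_R$ extended by zero is a generalized stationary subsolution lying below $v(1,\cdot,\cdot)$; a sufficiently large constant $M$ gives a supersolution; the parabolic comparison principle and Liouville uniqueness force convergence to the unique positive stationary solution. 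When $\lambda_N\geq 0$, the same contact argument applied to the generalized principal eigenfunction $\psi$ (playing the role of supersolution thanks to \eqref{KPPCC}) against any hypothetical bounded positive stationary $V$ yields a contradiction with \eqref{KPPCC}, so the only stationary solution is $0$, and extinction follows by squeezing between $0$ and the solution from the constant supersolution $M$.

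The main obstacle, either way, is organizational rather than technical: one must verify that the formulation in \cite{BRforcedspeed} indeed pivots on the generalized principal eigenvalue (making the reduction route essentially a citation), or else carefully adapt the comparison arguments of Section \ref{proof persistence} to the single Neumann equation, where the key subtlety is that the decay at infinity (Lemma \ref{dec}) now involves only $v$, and the contact-point bookkeeping in the Liouville step is correspondingly lighter.
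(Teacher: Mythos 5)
Your proposal is correct and mirrors the paper's own (very brief) argument exactly: the paper states that Proposition \ref{persistence no road} "can be proved similarly to Theorem \ref{persistence}, or, alternatively, one can recall the results of \cite{BRforcedspeed}... thanks to Lemma \ref{lemma lambda N}," which are precisely your two routes. You simply supply more of the detail (the even-reflection correspondence, inheritance of uniqueness via $y$-evenness, and the list of steps to adapt from Section \ref{proof persistence}) that the paper leaves implicit.
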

This result can be proved similarly to Theorem \ref{persistence}, or, alternatively, one can recall the results of \cite{BRforcedspeed}. In that paper, the authors consider the problem \eqref{OriginalCCmf} set on the whole plane, but their results adapt to the problem \eqref{no road mf} thanks to Lemma \ref{lemma lambda N}.
%
%, as explained in Section \ref{whole plane}{OK?}. The proof of \cite{BRforcedspeed}, however, is rather different from the one we provide here, and it does not seem easy to adapt it to the situation considered here.

\section{Influence of a road on an ecological niche}\label{section niche}
In this section, we study the effect of a road on an ecological niche. In terms of our models, this means that we compare the system ``with the road" \eqref{mf} with the system ``without the road" \eqref{no road mf}, when $c=0$, i.e., when the niche does not move.

\subsection{Deleterious effect of the road on a population in an ecological niche}\label{road lethal}

This section is dedicated to the proof of Theorem \ref{th road lethal}, which answers Question \ref{question 1}. We start with a technical result.

\begin{prop}\label{road no help positivity}
Assume that $c=0$. Let $\lambda_{1}$ and $\lambda_{N}$ be the generalized principal eigenvalues of the model ``with the road" \eqref{lsmf} and of the model ``without the road"~\eqref{lsmfn} respectively. Then, 
%$$
%\lambda_{1}(c) \geq 0, \quad \forall c\geq 0, \ \forall D>0.
%$$
$$
\lambda_{N} \geq 0 \implies \lambda_{1} \geq 0.
$$
\end{prop}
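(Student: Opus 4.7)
My plan is to exploit the Rayleigh-Ritz-type variational characterizations of $\lambda_1^R$ and $\lambda_N^R$ that hold in the case $c=0$ (Proposition~\ref{variational} and the second point of Proposition~\ref{prop lambda n}), together with the monotone convergence $\lambda_1^R\searrow\lambda_1$ and $\lambda_N^R\searrow\lambda_N$ (Propositions~\ref{limit} and~\ref{prop lambda n}). The strategy is to show $\lambda_1^R\geq 0$ for every $R>0$, from which the conclusion will follow by passing to the limit.

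First, assume $\lambda_N\geq 0$. Since $\lambda_N^R$ is decreasing and converges to $\lambda_N$, we have $\lambda_N^R\geq \lambda_N\geq 0$ for every $R>0$. The variational formula~\eqref{eqvariational no road} then immediately yields
\[
\int_{\Omega_R}\bigl(d|\nabla\psi|^2-m\psi^2\bigr)\geq 0 \qquad \text{for all } \psi\in\t H^1_0(\Omega_R).
\]

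Second, I inspect the numerator of the variational formula~\eqref{eqvariational} for $\lambda_1^R$, which decomposes as a sum of three terms:
\[
\mu\int_{I_R} D|\phi'|^2 \;+\; \nu\int_{\Omega_R}\bigl(d|\nabla\psi|^2-m\psi^2\bigr) \;+\; \int_{I_R}(\mu\phi-\nu\psi|_{y=0})^2.
\]
For every admissible pair $(\phi,\psi)\in\mathcal{H}_R=H^1_0(I_R)\times\t H^1_0(\Omega_R)$, the first and third terms are manifestly non-negative, while the middle term is non-negative by the previous step (since the second component $\psi$ of a pair in $\mathcal{H}_R$ lies precisely in $\t H^1_0(\Omega_R)$). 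The denominator being strictly positive whenever $(\phi,\psi)\not\equiv(0,0)$, we conclude $\lambda_1^R\geq 0$ for every $R>0$. Taking $R\to+\infty$ and invoking Proposition~\ref{limit} gives $\lambda_1\geq 0$, as desired.

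The argument is essentially algebraic once the correct tools are identified: the decisive observation is that the quadratic form appearing in the numerator of~\eqref{eqvariational} contains, as a single summand, exactly the Neumann Rayleigh quotient; the remaining two summands $\mu D\|\phi'\|_2^2$ and $\|\mu\phi-\nu\psi|_{y=0}\|_2^2$ are automatically non-negative for any $(\phi,\psi)$. Thus no additional obstacle should arise; the only mild subtlety is to make sure that the admissible class of test functions for the road-field problem projects onto the admissible class for the Neumann problem on $\Omega_R$, which is indeed the case by the very definition of $\mathcal{H}_R$.
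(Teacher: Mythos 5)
Your argument is correct and is essentially the paper's own proof: both rely on the monotone convergence $\lambda_1^R\searrow\lambda_1$, $\lambda_N^R\searrow\lambda_N$, and both observe that under $c=0$ the numerator of the Rayleigh quotient~\eqref{eqvariational} contains the Neumann quadratic form $\nu\int_{\Omega_R}(d|\nabla\psi|^2-m\psi^2)$, which is non-negative by $\lambda_N^R\geq 0$, together with two manifestly non-negative terms. Your presentation, factoring the numerator directly into three non-negative summands, is marginally cleaner than the paper's, which reaches the same conclusion via a lower bound involving a product of ratios, but the underlying idea is identical.
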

This proposition readily yields the statement $(i)$ of Theorem \ref{th road lethal}. Indeed, suppose that extinction occurs for the system ``without the road" \eqref{no road mf}. Then Proposition~\ref{persistence no road} implies that $\lambda_{N} \geq 0$ and thus Proposition \ref{road no help positivity} gives us that $\lambda_{1}\geq0$. Theorem \ref{persistence} then entails that extinction occurs for \eqref{mf}.

\begin{req}
In view of Proposition \ref{road no help positivity}, it might be tempting to think that $\l \geq \lambda_N$. However, this is not always the case. Indeed, taking $\psi=0$ in \eqref{eqvariational}, we find that $\l^R \leq \frac{\pi^2}{4DR^2}+\mu$, and therefore
$$\l = \lim_{R\to +\infty} \l^R \leq \mu.$$
Now, $\lambda_{N}$ does not depend on $\mu$, and for $\mu$ fixed can be made arbitrarily large. This can be achieved for instance by choosing $f = \rho f^L$ as in \eqref{f l} with $L$ sufficiently negative and $\rho$ sufficiently large.
%owing to \eqref{fgpe}, $\lambda_{1} \leq \mu$, while $\lambda_{N}$ does not depend on $\mu$, and can be larger than the latter.
\end{req}

\begin{proof}[Proof of Proposition \ref{road no help positivity}] Assume that $\lambda_{N}\geq 0$. 
Then $\lambda_{N}^{R} \geq 0$ for any $R>0$, thanks to~\eqref{cv eig no road}. 
Because $c=0$, on the one hand, the variational formula~\eqref{eqvariational no road} for $\lambda_{N}^{R}$ gives~us, for all $R>0$
\begin{equation*}
\forall \psi \in \t H^1_{0}(\O_R), \quad   \frac{  \int_{\Omega_{R}} \left( d\vert \nabla \psi \vert^{2} - m\psi^{2} \right)   }{ \int_{\Omega_{R}}\psi^{2} } \geq 0.
\end{equation*}H
On the other hand, \eqref{eqvariational} implies that
\begin{equation*}
\lambda_{1}^{R} \geq \inf_{(\phi,\psi) \in \mathcal{H}_{R}} \frac{  \int_{\Omega_{R}} \left( d\vert \nabla \psi \vert^{2} - m\psi^{2} \right)   }{  \int_{\Omega_{R}}\psi^{2} }\frac{ \nu\int_{\Omega_{R}}\psi^{2}}{\mu\int_{I_R}\phi^{2} + \nu\int_{\Omega_{R}}\psi^{2} }.
\end{equation*}
Gathering these inequalities, we get $\lambda_{1}^{R}\geq 0$. Because this is true for every $R>0$, taking the limit $R \to +\infty$ proves the result. 

%Because Propositions \ref{variational} and \ref{prop lambda n} 
%Because \eqref{cv eig no road} and \eqref{eqvariational no road} hold true without the bounded favorable zone hypothesis \eqref{BFZ} (see \cite{BDR1}),
\end{proof}

The proof of Theorem \ref{th road lethal} $(ii)$ is more involved. The key tool is the following.

\begin{prop}\label{prop road bad}
Assume that $c=0$ and that the parameters $d,\mu,\nu$ are fixed. For $L\in \R$ and $D>0$, 
let $\lambda_{N}(L)$ and $\l(L,D)$ denote the generalized principal eigenvalues of \eqref{lsmfn} and \eqref{lsmf} respectively, 
with nonlinearity $f^{L}$ given by \eqref{f l}. 
Then, for every $D>0$, there exists $L^{\star}\in \R$ such that
$$
\lambda_{N}(L^{\star}) <0< \lambda_{1}(D,L^{\star}).
$$

\end{prop}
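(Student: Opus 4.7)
The plan is to locate a critical size $L_*$ at which $\lambda_N$ first reaches zero, prove a \emph{strict} strengthening of Proposition \ref{road no help positivity} guaranteeing $\lambda_1(L_*,D)>0$, and then shift slightly to the right by continuity.

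First, I would use Proposition \ref{prop lambda n}: $L\mapsto\lambda_N(L)$ is continuous and non-increasing. Since $m^L\to-1$ uniformly as $L\to-\infty$, one has $\lambda_N(-\infty)\geq 1$; conversely, testing the Rayleigh quotient \eqref{eqvariational no road} against a cutoff of the constant $1$ supported deep inside the favorable zone and letting first $L$, then $R$, tend to $+\infty$ gives $\lambda_N(L)\to -1$ as $L\to+\infty$. Thus $L_*:=\sup\{L:\lambda_N(L)\geq 0\}$ is finite, $\lambda_N(L_*)=0$ by continuity, and $\lambda_N(L)<0$ for every $L>L_*$.

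The heart of the argument is the strict dichotomy
\[
\lambda_N(L)\geq 0\quad\Longrightarrow\quad\lambda_1(L,D)>0.
\]
Suppose on the contrary that $\lambda_N\geq 0$ while $\lambda_1\leq 0$. By the last part of Proposition \ref{limit} there is a positive generalized principal eigenfunction $(\phi,\psi)$ of \eqref{lsmf}; hypothesis \eqref{BFZ} makes the field operator coercive at infinity, so $(\phi,\psi)$ decays exponentially and the integrals below converge. Multiplying the road equation by $\phi$, and the field equation by $\psi$, then integrating over $\mathbb{R}$ and $\mathbb{R}\times\mathbb{R}^+$ respectively (using Green's identity and the Robin condition in the second one), one obtains
\[
D\int|\phi'|^2+(\mu-\lambda_1)\int\phi^2=\nu\int\phi\,\psi|_{y=0},
\]
\[
J_N[\psi]=\lambda_1\int\psi^2+\mu\int\phi\,\psi|_{y=0}-\nu\int(\psi|_{y=0})^2,
\]
where $J_N[\psi]:=d\int|\nabla\psi|^2-\int m^L\psi^2$ is the numerator in the Rayleigh quotient \eqref{eqvariational no road}. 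Passing \eqref{eqvariational no road} to the limit $R\to\infty$ (by approximating $\psi$ with compactly supported functions) provides the Poincar\'e-type bound $J_N[\psi]\geq\lambda_N\int\psi^2$; together with the second identity this forces $\mu\int\phi\,\psi|_{y=0}\geq\nu\int(\psi|_{y=0})^2$. A first Cauchy--Schwarz then yields $\mu^2\int\phi^2\geq\nu^2\int(\psi|_{y=0})^2$, and inserting this into the road identity via a second Cauchy--Schwarz produces $D\int|\phi'|^2\leq\lambda_1\int\phi^2\leq 0$. Thus $\phi$ is constant, and being integrable it must vanish identically, contradicting $\phi>0$.

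Applying the dichotomy at $L=L_*$ gives $\lambda_1(L_*,D)>0$; continuity of $\lambda_1(\cdot,D)$ in $L$ (Proposition \ref{prop function}) provides $\delta>0$ with $\lambda_1(L,D)>0$ throughout $[L_*,L_*+\delta]$, and any $L^\star\in(L_*,L_*+\delta]$ satisfies $\lambda_N(L^\star)<0<\lambda_1(L^\star,D)$. The main difficulty lies in the strict dichotomy itself: as the remark following Proposition \ref{road no help positivity} shows, the naive inequality $\lambda_1\geq\lambda_N$ fails in general ($\lambda_1\leq\mu$ whereas $\lambda_N$ can be arbitrarily large), so strict positivity must be squeezed out of the careful interplay between the two integral identities and the two Cauchy--Schwarz estimates---morally, the road component $\phi$ of a candidate eigenfunction cannot be a nonzero constant (it must decay at infinity), and this forces an irreducible cost $D\int|\phi'|^2>0$ that is incompatible with $\lambda_1\leq 0$.
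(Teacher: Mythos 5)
Your proof takes a genuinely different route from the paper's in the key step. The paper shows $\lambda_1(\ol L,D)>0$ by an indirect argument: if $\lambda_1(\ol L,D)\leq 0$, the strict monotonicity of $\l$ in $D$ (last bullet of Proposition~\ref{prop function}, which holds precisely because $\l\leq 0$) forces $\lambda_1(\ol L,D')<0$ for $D'<D$, while Proposition~\ref{road no help positivity} says $\lambda_1(\ol L,D')\geq 0$ since $\lambda_N(\ol L)=0$ --- contradiction. You instead try to prove the strict implication $\lambda_N\geq 0\Rightarrow\lambda_1>0$ head-on, via two integral identities on the unbounded half-plane combined with two applications of Cauchy--Schwarz; this is a creative alternative that, if valid, would supersede Proposition~\ref{road no help positivity} and bypass the monotonicity argument altogether.

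However, there is a genuine gap. Your argument hinges on the generalized principal eigenfunction $(\phi,\psi)$ being in $L^2$ (indeed in $H^1$) so that all the integrals converge, the boundary terms at infinity vanish in the integrations by parts, and the final step ``$\phi$ constant and integrable $\Rightarrow\phi\equiv 0$'' makes sense. You assert this decay follows because \eqref{BFZ} makes the operator ``coercive at infinity,'' but this is not established anywhere in the paper: Proposition~\ref{limit} only guarantees $(\phi,\psi)\in W^{2,p}_{loc}$, and generalized principal eigenfunctions on unbounded domains, obtained as limits of truncated eigenfunctions, can in general be bounded away from zero or even grow (think of the Laplacian with $m\equiv 0$, where the limit is a constant). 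When $\lambda_1$ lies strictly below the bottom of the ``essential spectrum'' one does expect exponential decay, but proving this for the coupled road-field system is a substantive piece of work that your proof silently skips. Nor does the argument survive truncation to $\O_R$: there, the conclusion only becomes $\lambda_1^R>0$ for each $R$, which in the limit gives $\lambda_1\geq 0$, not the strict inequality you need. The paper's use of the strict $D$-monotonicity is precisely the device that extracts strictness without ever touching the integrability of the eigenfunction; you should either cite (or establish) the exponential decay of the generalized principal eigenfunction when $\lambda_1\leq 0$, or fall back on the paper's monotonicity trick.
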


\begin{proof} \emph{Step 1. Finding $L$ that yields $\lambda_{N}=0$.}\\
Let us first observe that
$$
\lim_{L\to -\infty} \lambda_{N}(L) > 0 > \lim_{L\to +\infty} \lambda_{N}(L).
$$
Indeed, owing to formula \eqref{eqvariational no road}, $L \mapsto \lambda_N(L)$ is 
non-increasing on $\R$, then it admits limits
as $L$ goes to $\pm\infty$. 
Moreover, Proposition \ref{prop lambda n} yields, for every $R>0$, 
$$
\lim_{L\to +\infty}\lambda_{N}(L) \leq 
\inf_{\psi \in \t H^1_{0}(\O_R)} \frac{ d \int_{\Omega_R} \vert \nabla \psi \vert^{2}}{\int_{\O_R}\psi^{2} } - 1.
$$
Then, taking the limit as $R\to+\infty$ and using the well-known fact that the quantity
$$
\inf_{\psi \in \t H^1_{0}(\O_R)} \frac{  \int_{\Omega_R} \vert \nabla \psi \vert^{2}}{\int_{\O_R}\psi^{2} }
$$
coincides with the principal eigenvalue of the Laplace operator on $B_R\subset
\R^2$ under Dirichlet boundary condition, which converges to $0$ as $R$ goes to $+\infty$, we find $\lim_{L\to +\infty} \lambda_{N}(L) < 0$.

Now, by definition of $f^L$, if $-L$ is large enough, we have that $f^L<-\frac{1}{2}$. Hence,~\eqref{eqvariational no road} implies that, for every such $L$ and for $R>0$,
$$
 \lambda_{N}^{R}(L) = \inf_{\psi \in \t H^1_{0}(\O_R)} \frac{  \int_{\Omega_{R}} \left( d\vert \nabla \psi \vert^{2} - m^L\psi^{2} \right)  }{\int_{\Omega_{R}}\psi^{2} }\geq \frac{1}{2},
$$
and then $\lim_{L \to -\infty}\lambda_{N}(L)>0$. Owing to Proposition \ref{prop lambda n}, $L \mapsto \lambda_{N}(L)$ is a continuous function on $\R$, and we can then define
$$
\ol L := \max\{L \in \R \ : \  \lambda_N(L) \geq 0 \}.
$$
It follows that $\lambda_N(\ol L) =0$ and $\lambda_N(L) <0$ for $L>\ol L$.

%%%%%%%%%%%%

\medskip
\emph{Step 2. For every $D>0$, there holds $\l(\ol L,D)>0$.}\\
Assume by contradiction that there is $D>0$ such that $\l(\ol L,D)\leq 0$. Owing to the last statement of Proposition \ref{prop function}, for any $D'\in (0,D)$, we have
$$
0 \geq \l(\ol L, D) > \l(\ol L,D').
$$
This means that $\lambda_1(\ol L,D')<0 = \lambda_N(\ol L)$, contradicting Proposition \ref{road no help positivity}.
%$$
%0 \leq D-\e, \ol L) <0,
%$$
%which is the contradiction we sought, hence \eqref{positive} holds.

\medskip
\emph{Step 3. Conclusion.}\\
Let $D>0$ be given. As stated in Proposition \ref{prop function}, $L \mapsto \lambda_{1}(L,D)$ is a continuous function and thus, by Step $2$, $\l(L^\star,D)>0$ if $L^\star>\ol L$, with $L^\star$ sufficiently close to $\ol L$.
On the other hand, recalling the definition of $\ol L$, we have $\lambda_N(L^\star)<0$. This concludes the proof.
%
%
% and owing to Proposition \ref{prop lambda n}, $L \mapsto \lambda_{N}(L)$ is strictly decreasing. Therefore, we can find $L^{\star}>\overline{L}$ such that 
%$$
%\lambda_{N}(L^{\star}) <0< \lambda_{1}(D,L^{\star}).
%$$
\end{proof}
Combining Proposition \ref{prop road bad} with Theorem \ref{persistence} and Proposition \ref{persistence no road} we derive the statement $(ii)$ of Theorem \ref{th road lethal}.

\subsection{Influence of the diffusions $D$ and $d$}\label{diffusions}

\subsubsection{Extinction occurs when $d$ is large}

This section is dedicated to the proof of Theorem \ref{d infinity}. A similar result is derived for the model ``without the road" in \cite{BRforcedspeed}. Throughout this section, we let $\lambda_{1}(d)$ be the generalized principal eigenvalue of \eqref{lsmf} with $c=0$, $D,\mu,\nu,f$ fixed and $d$ variable. We start with a technical proposition.
\begin{prop}\label{est d grand}
Let $\lambda_{1}(d)$ be the generalized principal eigenvalue of \eqref{lsmf} with $c=0$ and diffusion in the field $d>0$. Then,

$$
\liminf_{d\to +\infty}\lambda_{1}(d) \geq \min\Big\{ \mu , -\limsup_{\vert (x,y) \vert \to +\infty}m(x,y)\Big\} >0.
$$
\end{prop}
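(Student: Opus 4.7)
\medskip
\noindent\textbf{Proof proposal.}
Set $M := -\limsup_{|(x,y)|\to\infty} m(x,y)$, which is strictly positive by hypothesis~\eqref{BFZ}, and recall $\mu>0$ by assumption. The plan is to show that for any $\epsilon>0$ we have $\lambda_1(d) \geq \min\{\mu, M\} - \epsilon$ for $d$ sufficiently large; the liminf bound and the strict positivity of the right-hand side will then follow at once. By Proposition~\ref{limit}, it suffices to prove the same lower bound for $\lambda_1^R(d)$ uniformly in $R$ large, and then pass to $R \to \infty$.

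Since $c=0$, the plan is to exploit the Rayleigh-Ritz characterization~\eqref{eqvariational}. Fix $(\phi,\psi) \in \mathcal H_R$, normalize so that $\mu\int\phi^2 + \nu\int\psi^2 = 1$, and set $A := \mu\int\phi^2$, $B := \nu\int\psi^2$, so $A+B=1$. I would extract a contribution of order $\mu A$ from the exchange term and a contribution of order $MB$ from the field diffusion, as follows. Split the gradient term as $\nu d\int|\nabla\psi|^2 = \frac{\nu d}{2}\int|\nabla\psi|^2 + \frac{\nu d}{2}\int|\nabla\psi|^2$: the first half, combined with $-\nu\int m\psi^2$, is bounded below by $\lambda_N^R(d/2)\,B$ by the Rayleigh characterization~\eqref{eqvariational no road} of $\lambda_N^R(d/2)$ (cf.\ Proposition~\ref{prop lambda n}), while the second half is held in reserve. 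For the exchange term, apply the elementary inequality $(a-b)^2 \geq (1-t)a^2 - (1/t-1)b^2$, valid for $t \in (0,1)$, and then the trace estimate $\int_{I_R}\psi|_{y=0}^2 \leq \eta B/\nu + \eta^{-1}\int|\nabla\psi|^2$ obtained by integrating $\partial_y(\psi^2)$ over $\O_R$ and using that $\psi$ vanishes on the curved part of $\partial\O_R$ (the same computation appears inside the proof of Lemma~\ref{dec}).

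Choosing $t := \epsilon/(2\mu)$ and $\eta$ so that $(1/t-1)\nu\eta = \epsilon/2$, the $A$- and $B$-coefficients become $\mu - \epsilon/2$ and $\lambda_N^R(d/2) - \epsilon/2$ respectively, while the induced negative coefficient $(1/t-1)\nu^2/\eta$ on $\int|\nabla\psi|^2$ is a constant independent of $d$; this is absorbed by the retained $(\nu d/2)\int|\nabla\psi|^2$ as soon as $d$ is large. Dropping the remaining non-negative terms yields
\[
N(\phi,\psi) \;\geq\; \bigl(\lambda_N^R(d/2) - \tfrac{\epsilon}{2}\bigr)B + \bigl(\mu - \tfrac{\epsilon}{2}\bigr)A.
\]
The no-road analogue $\lambda_N(d) \to M$ as $d \to \infty$ is proved in~\cite{BRforcedspeed} and applies to system~\eqref{lsmfn} via the equivalence discussed in Section~\ref{whole plane}; since $\lambda_N^R \geq \lambda_N$ (Proposition~\ref{prop lambda n}), for $d$ large one has $\lambda_N^R(d/2) \geq M - \epsilon/2$ uniformly in $R$. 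Using $A + B = 1$, this gives $N \geq \min\{\mu, M\} - \epsilon$, so $\lambda_1^R(d) \geq \min\{\mu, M\} - \epsilon$; passing to $R \to \infty$ and then $\epsilon \downarrow 0$ concludes the proof.

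The main technical obstacle is the interplay between the two parameters $t$ and $\eta$: the Cauchy trick on the exchange term produces a boundary penalty proportional to $1/t - 1$, which the trace inequality converts into a $(1/t-1)\nu^2/\eta$ penalty on $\int|\nabla\psi|^2$. The device of splitting $\nu d\int|\nabla\psi|^2$ into two halves---one to reduce to the no-road eigenvalue, the other as an absorptive reserve---is precisely what reconciles these competing demands once $d$ is taken large.
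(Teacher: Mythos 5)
Your proof is correct, and it takes a genuinely different route from the paper's. The paper proves Proposition~\ref{est d grand} by constructing an explicit pair of test functions for the supersolution characterization~\eqref{fgpe} -- a cosine cap glued to an exponential tail in $x$, times an exponential in $y$, scaled suitably on the road -- and then verifying by hand that the three required differential inequalities hold for $d$ large, with parameters $\alpha_d \sim d^{-\beta/2}$ tuned so that each term has the right sign in the limit. Your argument instead works entirely through the Rayleigh--Ritz formula~\eqref{eqvariational} (valid here since $c=0$), splitting the field diffusion term into a part that reduces to the no-road Rayleigh quotient $\lambda_N^R(d/2)$ and a reserve that absorbs the constant penalty produced by Young's inequality on the exchange term and the trace inequality on $\int_{I_R}\psi|_{y=0}^2$. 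What the paper's construction buys is self-containedness (no reference to the no-road asymptotics, which are reproved implicitly inside the computation) and applicability beyond the variational setting, since~\eqref{fgpe} makes sense even when $c\neq0$; what your energy argument buys is brevity and a transparent explanation of where $\mu$ and $M$ come from -- the $\mu$-contribution is extracted from the exchange penalty $\int(\mu\phi-\nu\psi)^2$, and the $M$-contribution from the reduction to the no-road eigenvalue.

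Two small remarks. First, the parenthetical ``the same computation appears inside the proof of Lemma~\ref{dec}'' is inaccurate: that lemma's proof uses a Taylor-expansion and parabolic-comparison argument, not the $\partial_y(\psi^2)$ trace estimate. The trace inequality you use is standard and correct, but the attribution should be dropped. Second, the step $\liminf_{d\to\infty}\lambda_N(d)\geq M$ is cited to~\cite{BRforcedspeed}; the present paper does gesture at such a result (``A similar result is derived for the model `without the road' in~\cite{BRforcedspeed}''), but if the precise lower bound is not stated there one would either need to prove it directly (a shorter version of the paper's test-function construction, omitting the road) or verify the reference. This does not affect the logic of the argument, only its self-containedness.
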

%
%Let us make a remark here. Thanks to the variational formula given in Proposition \ref{variational}, it is obvious that $\lambda_{1}$ is a non-decreasing function of $d$ when $c=0$.
%\textcolor{blue}{However, when $c\neq 0$, the favorable zone moves. A too large diffusion still leads to extinction, but if the diffusion is too small, then the population could not be able to follow the favorable zone, leading to extinction. This means that, if $c\neq 0$, the generalized principal eigenvalue is not non-decreasing in $d$ anymore.}{vireo ï¿½a, aprï¿½s ?}

\begin{proof} The proof relies on the construction of suitable test functions for the formula~\eqref{fgpe}. It is divided into five steps. 

\medskip
\emph{Step 1. Defining the test function.} \\
We take $\lambda \geq 0$ such that
$$ \lambda < \min\{ \mu , -\limsup_{\vert (x,y) \vert \to +\infty}m(x,y)\}.
$$
By hypothesis \eqref{BFZ}, we can find $M$ large enough so that
$$
%-K:=\sup_{\vert x \vert \geq L} \sup_{y \geq 0} f_{v}(x,y,0)
K:=-\sup_{\vert (x,y) \vert \geq M}  m(x,y) >0
$$
whence $\lambda \in (0,\min\{ \mu , K\})$. We take $A \in (\lambda,\mu)$ and we define
\begin{equation}\label{v u d}
\begin{cases}
\Psi(x,y) := \phi(x) + \psi(y), \\
\Phi(x) := \frac{\nu}{\mu - A}\Psi(x,0),
\end{cases}
\end{equation}
where
\begin{equation}
\begin{cases}
\phi(x) := \cos(\alpha_{d} x) 1_{\vert x \vert \leq M} + \cos(\alpha_{d} M) e^{\frac{1}{d^{\beta}}(M-\vert x \vert)}1_{\vert x \vert > M}\quad &\text{ for } \ x \in \mathbb{R}, \\
\psi(y) := e ^{-\frac{y}{d^{\beta}}}\quad &\text{ for } \ y \in \mathbb{R},
\end{cases}
\end{equation}
with $\beta \in (\frac{1}{2},1)$ and $\alpha_{d}$ being the unique real number such that
$$
\alpha_{d} \in \left(0,  \frac{\pi}{2M}\right) \ \text{ and } \ \tan(\alpha_{d} M) = \frac{1}{d^{\beta}\alpha_{d}}.
$$
Then, the functions $\Phi$ and $\Psi$ given by \eqref{v u d} are non-negative and belong to $W^{2,\infty}(\R)$ and $W^{2,\infty}(\R \times [0,+\infty))$ respectively. They are suitable test functions for formula~\eqref{fgpe}.

The next steps are dedicated to proving the following inequalities
\begin{equation}\label{sur}
\begin{cases}
D\Phi^{\prime \prime} - \mu \Phi + \nu \Psi\vert_{y=0} +\lambda \Phi &\leq 0, \\
d \Delta \Psi +m\Psi + \lambda \Psi &\leq 0, \\
d\partial_{y}\Psi\vert_{y=0}-\nu \Psi\vert_{y=0} + \mu \Phi &\leq 0.
\end{cases}
\end{equation}
Observe that $\Psi\leq 2$, $\Phi\leq \frac{2\nu}{\mu-A}$. These inequalities will be used several times in the following computations.

\medskip
\emph{Step 2. The boundary condition.} \\
Let us first check that $(\Phi,\Psi)$ satisfies the third inequality of \eqref{sur}. We have
\begin{equation*}
\begin{array}{lcc}
d\partial_{y}\Psi\vert_{y=0}-\nu \Psi\vert_{y=0} +\mu \Phi &=& d \psi^{\prime}(0)+A\Phi \\
  &\leq& -d^{1-\beta} +\frac{2 A\nu}{\mu - A} .
\end{array}
\end{equation*}
Because $1-\beta >0 $, this is negative is $d$ if large enough. 
%%% Nommer \frac{ A\nu}{\mu - A} = C ?
%%% Dire que le the est vrai pour quels types de zone favorable

\medskip
\emph{Step 3. Equation for $\Phi$.}\\
Let us check that the first inequality of \eqref{sur} holds true almost everywhere. First, assume that $\vert x \vert < M$. Then
\begin{equation*}
\begin{array}{lll}
D\Phi^{\prime\prime}-\mu \Phi+\nu \Psi\vert_{y = 0} + \lambda \Phi &=& D\Phi^{\prime \prime} -A \Phi + \lambda \Phi \\
 &=& \frac{\nu}{\mu-A}(-D\alpha_{d}^{2}\cos(\alpha_{d} x) +(\lambda-A)(\cos(\alpha_{d} x) + 1 )).
\end{array}
\end{equation*}
Because $\lambda<A$, this is negative. Now, if $\vert x \vert > M$, we have

\begin{equation*}
\begin{array}{lll}
D\Phi^{\prime\prime}-\mu \Phi+\nu \Psi\vert_{y=0} + \lambda \Phi &=& D\Phi^{\prime \prime} -A \Phi + \lambda \Phi \\
 &=& \frac{\nu}{\mu-A} \left( (\frac{D}{d^{2\beta}} +\lambda-A)\cos(\alpha_{d} M)e^{\frac{1}{d^{\beta}}(M-\vert x \vert)} +\lambda-A \right).
\end{array}
\end{equation*}
Because $\lambda< A $, this is negative if $d$ is large enough.

\medskip
\emph{Step 4. Equation for $\Psi$.} \\
Finally, let us check that the second inequality of \eqref{sur} holds almost everywhere. First, when $\vert x \vert < M$, we have (a.e.)
\begin{equation*}
\begin{array}{lll}
d \Delta \Psi +m \Psi+\lambda \Psi &=& d(\phi^{\prime \prime}+\psi^{\prime \prime})+(\lambda+m)(\phi+\psi)\\
 &\leq& -d\alpha_{d}^{2}\cos(\alpha_{d} M)+d^{1-2\beta} +2( \sup m +\lambda). 
\end{array}
\end{equation*}
Observe that 
$$
\alpha^{2}_{d} \sim \frac{1}{Md^{\beta}} \quad \text{ as }\  d \ \text{ goes }\ to +\infty.
$$
 Therefore, 
 $$
 -d\alpha_{d}^{2}\cos(\alpha_{d} M)\sim- \frac{1}{M}d^{1-\beta} \quad \text{ as }\  d \ \text{ goes }\ to +\infty.
 $$
Because $\beta<1$, this goes to $-\infty$ as $d$ goes to $+\infty$. On the other hand, $d^{1-2\beta}$ goes to zero as $d$ goes to $+\infty$, because $1-2\beta<0$. Then, for $d$ large enough,
$$
-d\alpha_{d}^{2}\cos(\alpha_{d} M)+d^{1-2\beta} +2( \sup m +\lambda)
$$
is negative.

If $\vert x \vert > M$, we have
\begin{equation*}
\begin{array}{lll}
d \Delta \Psi +m \Psi+\lambda \Psi &=& d(\phi^{\prime \prime}+\psi^{\prime \prime})+(\lambda+m )(\phi+\psi)\\
 &\leq& d(\frac{1}{d^{2\beta}}\cos(\alpha_{d} M)e^{\frac{1}{d^{\beta}}(M-\vert x \vert)} +d^{-2\beta} e^{-\frac{1}{d^{\beta}} y} )+(-K+\lambda)(\phi+\psi) \\
 &\leq& (d^{1-2\beta}+\lambda-K)\phi+(d^{1-2\beta}+\lambda-K)\psi.
\end{array}
\end{equation*}
Because $\lambda<K$ and $\beta > \frac{1}{2}$, this is negative for $d$ large enough.

\medskip
\emph{Step 5. Conclusion.} \\
Gathering all that precedes, we have shown that, for $d$ large enough, \eqref{sur} is verified. Owing to the formula \eqref{fgpe} defining $\l(d)$, this implies that $\lambda_1(d) \geq \lambda$, for $d$ large enough. The fact that we choose $\lambda$ arbitrarily yields the result.
\end{proof}

We are now in a position to prove Theorem \ref{d infinity}.
\begin{proof}[Proof of Theorem \ref{d infinity}]
Owing to Proposition \ref{est d grand}, we see that there is $\overline{d}>0$ such that
$$
\forall d > \ol d, \quad \l(d)\geq 0.
$$
It is readily seen from the variational formula \eqref{eqvariational} that the function $d \in \R^+ \mapsto \lambda_{1}(d)$ is non-decreasing. We can define
$$
d^\star := \min\{ d \geq 0 \ : \ \l(d) \geq 0\}.
$$
Theorem \ref{persistence} then yields the result.
\end{proof}

We have proved that extinction occurs when the diffusion in the field is above a certain threshold. It is natural to wonder whether the same result holds in what concerns the diffusion on the road: is there $D^{\star}$ such that extinction occurs for \eqref{mf} with $c=0$ when $D\geq D^{\star}$? Without further assumptions on the coefficients, the answer is no in general, as shown in the following section.

\subsubsection{Influence of the diffusion on the road}\label{section Robin}
This section is dedicated to proving the following statement.
\begin{prop}\label{D no influence}
Consider the system \eqref{mf}, with $c=0$ and
$f=f^L$ given by \eqref{f l}.
For $L$ large enough (depending on $d$), persistence occurs for every $D, \mu, \nu>0$.
\end{prop}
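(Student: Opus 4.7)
The plan is to use the variational characterization of the principal eigenvalue on truncated domains, Proposition~\ref{variational}, to produce an upper bound for $\lambda_1^R$ that is \emph{independent} of the parameters $D,\mu,\nu$ and depends only on $d$ and $L$. Combined with Theorem~\ref{persistence} and the fact that $\lambda_1=\lim_{R\to+\infty}\lambda_1^R$, this will give persistence as soon as the bound is negative, which we arrange by taking $L$ large depending on $d$.

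The key observation is that the space $\mathcal{H}_R = H^1_0(I_R)\times \tilde H^1_0(\Omega_R)$ contains the pairs $(\phi,\psi)$ with $\phi\equiv 0$ and $\psi\in\tilde H^1_0(\Omega_R)$ vanishing in a neighborhood of the road $\R\times\{0\}$. For such a test pair, both $\phi$ and $\psi|_{y=0}$ vanish, so the coupling term $\int_{I_R}(\mu\phi-\nu\psi|_{y=0})^2$ disappears, and the formula \eqref{eqvariational} yields
\begin{equation*}
\lambda_1^R \;\leq\; \frac{\int_{\Omega_R}\bigl(d|\nabla\psi|^2 - m^L\psi^2\bigr)}{\int_{\Omega_R}\psi^2}.
\end{equation*}
Crucially, this upper bound involves neither $D$ nor $\mu$ nor $\nu$.

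Next, for $L$ large we construct $\psi$ concentrated deep inside the favorable zone and away from the road. Concretely, pick $r_0>0$ with $\chi(-r_0)\geq 1/2$, and consider the disk $B_\rho(0,L/2)$ with $\rho=L/3$, which, for $L$ large enough, is contained in the upper half-plane at positive distance from $\{y=0\}$ and satisfies $|(x,y)|\leq 5L/6\leq L-r_0$ on it, hence $m^L\geq 1/2$ there. Choose $\psi$ to be the first Dirichlet eigenfunction of $-\Delta$ on $B_\rho(0,L/2)$, extended by $0$; it belongs to $\tilde H^1_0(\Omega_R)$ for $R$ large and satisfies
\begin{equation*}
\frac{\int d|\nabla\psi|^2}{\int\psi^2} \;=\; d\,\frac{\lambda^{\mathrm{Dir}}_1(B_1)}{\rho^2} \;=\; \frac{9\,d\,\lambda^{\mathrm{Dir}}_1(B_1)}{L^2},
\end{equation*}
so that the Rayleigh quotient above is at most $9d\lambda_1^{\mathrm{Dir}}(B_1)/L^2 - 1/2$, which is strictly negative for $L$ large enough in terms of $d$ alone.

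Combining these two steps, for such $L$ and for $R$ large enough we obtain $\lambda_1^R<0$ independently of $D,\mu,\nu$, hence $\lambda_1=\lim_{R\to+\infty}\lambda_1^R\leq\lambda_1^R<0$ by Proposition~\ref{limit}. Persistence follows from Theorem~\ref{persistence}. I do not expect any real obstacle here: all the work is in identifying the decoupling test function $(0,\psi)$ with $\psi|_{y=0}=0$; everything else is a standard Faber–Krahn type scaling estimate for the Dirichlet eigenvalue on a large ball inside the favorable niche.
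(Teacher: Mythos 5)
Your proof is correct. It reaches the same conclusion by a genuinely different, and arguably cleaner, route than the paper's. The paper constructs a compactly supported stationary subsolution $\e\underline v$ supported in a ball $\ol B_R(0,2R)$ far from the road (taking $\underline v$ to be the Dirichlet eigenfunction of $-\Delta$ on that ball), then invokes the parabolic strong comparison principle to show the solution of~\eqref{mf} stays bounded below for all time, so that extinction fails and persistence follows from the dichotomy of Theorem~\ref{persistence}. You instead work at the level of the generalized principal eigenvalue: plugging the pair $(0,\psi)$ with $\psi$ supported away from the road into the variational formula~\eqref{eqvariational} decouples the system, kills the exchange term and every occurrence of $D,\mu,\nu$, and reduces the Rayleigh quotient to a Dirichlet quotient for $-d\Delta - m^L$ alone, which is then estimated by the first Dirichlet eigenvalue of a ball well inside the favorable zone. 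The two arguments exploit the same underlying phenomenon (a large favorable ball far from the road has a small Dirichlet eigenvalue), but yours goes directly through $\lambda_1<0$ and does not need the parabolic machinery. Note however that your route depends on the formula of Proposition~\ref{variational}, which is only available for $c=0$; the paper's subsolution argument is in principle more flexible, though it too is stated for $c=0$ here. One stylistic remark: since $\lambda_1^R\searrow\lambda_1$ monotonically (Proposition~\ref{limit}), your last sentence could simply fix one $R$ large enough with $\lambda_1^R<0$ and conclude $\lambda_1\leq\lambda_1^R<0$, which you indeed do, so the argument is complete.
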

This result is the counterpart of Theorem \ref{d infinity}, which 
asserts that increasing the diffusion $d$ in the field, the system is inevitably led to extinction
(under assumption~\eqref{BFZ}).
Proposition~\ref{D no influence} shows that this is not always the case if, instead of $d$, one increases
the diffusion on the road. 
It is also interesting to compare it with Theorem~\ref{th road lethal}. While the latter states that the road always has a deleterious influence on the population, Proposition \ref{D no influence} means that this effect is nevertheless limited. Indeed, if the favorable zone is sufficiently large, then, no matter what happens on the road, that is, regardless of the coefficients $D,\mu,\nu$, there will always be persistence.

\begin{proof}[Proof of Proposition \ref{D no influence}]
For $R>0$, let $\lambda_R$ and $\phi_R$ denote the principal eigenvalue and (positive)
eigenfunction of 
$-\Delta$ on $B_R\subset \R^2$, under Dirichlet boundary condition.
We take $R$ large enough so that $\lambda_R<\frac1d$ (it is well known that
$\lambda_R\searrow0$ as $R$ goes to $+\infty$).
Then define $ \ul v(x,y) := \phi_R(x,y-2R)$ for $(x,y) \in 
\overline{B}_{R}(0,2R)$. The definition of~$f^L$ and the fact that $d\lambda_R<1$
allows us to find $L$ sufficiently large so that 
$$\min_{(x,y)\in\ol B_{3R}}\,m^{L}(x,y,0) >d\lambda_R.$$
As a consequence,
$$-d \Delta \ul v =d\lambda_R\ul v<m^{L}(x,y,0)\ul v\quad\ \text{ in }\overline{B}_{R}(0,2R).$$
Owing to the regularity of $f$, we can take $\e>0$ small enough so that $\e \ul v$ satisfies
$-d \Delta(\e \ul v)<f^{L}(x,y,\e\ul v)$  in $\overline{B}_{R}(0,2R)$.
The parabolic comparison principle 
in the ball ${B}_{R}(0,2R)$ implies that the solution of \eqref{mf} with $c=0$,
arising from the initial 
datum $(0,\e \ul v)$ with $\e\ul v$ extended by $0$ outside 
$\overline{B}_{R}(0,2R)$, is larger than or equal to $(0,\e \ul v)$ for all positive times.
In particular, extinction does not occur and hence, by Theorem~\ref{persistence}, 
we necessarily have persistence. Because this is true independently of the values of $D,\mu,\nu$,
the proof is complete.
%
%the proof is complete.
\end{proof}
It is worthwhile to note a few observations about this result. To prove Proposition \ref{D no influence}, we compared 
system~\eqref{mf} with the single equation in a ball, under Dirichlet boundary condition,
for which we are able to show that persistence occurs provided $L$ is sufficiently large.
The population dynamics intuition behind this argument is clear: the Dirichlet condition means that the individuals touching the boundary are ``killed"; it is therefore harder for the population to persist. 
One could have compared our system with the system with Dirichlet condition 
on the road instead, by showing that the generalized principal eigenvalue of the latter 
is always larger than $\l$. 
As a matter of fact, it is also possible to compare system~\eqref{mf} with the 
system with Robin boundary condition:
$$
\left\{
\begin{array}{rll}
\partial_t v -d \Delta v  &=f(x,y,v), &\quad  t>0, \ (x,y) \in \mathbb{R}\times\mathbb{R}^{+}, \\
-d \partial_{y}v\vert_{y=0} +\nu v\vert_{y=0} &=0 &\quad t>0,\ x \in \mathbb{R}\times\{0\}.
\end{array}
\right.
$$
This system describes the situation where the individuals can enter the road, but cannot leave it. It can actually be shown that the generalized principal eigenvalue of the linearization of this system, which we denote by $\lambda_{Robin}$, is larger than $\lambda_1$. We conjecture that
$$
\lambda_{1} \underset{D \to +\infty}{\longrightarrow} \min\{ \lambda_{Robin}, \mu\}.
$$
This is based on the intuition that,
as $D$ becomes large, the population on the road diffuses ``very fast'' and 
then is sent ``very far" into the unfavorable zone, where it dies.

\section{Influence of a road on a population facing a climate change}\label{section road}

\subsection{Influence of the speed $c$}\label{section c}

This section is dedicated to proving Theorem \ref{critical}. In this whole section, we assume that the coefficients $D,d,\mu,\nu$ are fixed in \eqref{lsmf}, and we let $\lambda_{1}(c)$ denote the generalized principal eigenvalue of \eqref{lsmf} and $\lambda_{1}^{R}(c)$ the principal eigenvalue of \eqref{eigborn}, as functions of the parameter $c\geq 0$.
%Without loss of generality, we only consider the case $c\geq0$, it is readily seen that $\lambda_{1}(-c) = \lambda_{1}(c)$. 
We start with proving two preliminary results.

\begin{prop}\label{EstEig}
Let $\lambda_{1}(c)$ be the generalized principal eigenvalue of \eqref{lsmf}. Then
$$
\lambda_{1}(c) \geq \frac{1}{4} \min\left\{\frac{1}{d}, \frac{1}{D}\right\}c^{2}  - [m]^+.
$$
\end{prop}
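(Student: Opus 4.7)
The plan is to exploit the supremum definition \eqref{fgpe} of $\l$ by producing an explicit positive pair $(\phi,\psi)$ and a value of $\lambda$ equal to the claimed lower bound, such that all three differential inequalities in \eqref{fgpe} are satisfied. Since the right-hand side of the inequality in Proposition~\ref{EstEig} is translation-invariant and the whole system~\eqref{lsmf} has constant coefficients apart from $m(x,y)$, a pure exponential ansatz in the $x$-variable looks natural.

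I would try the test pair
\[
\phi(x) = e^{\alpha x}, \qquad \psi(x,y) = \beta\, e^{\alpha x},
\]
with $\alpha\in\R$ and $\beta>0$ to be chosen, so that $\psi$ has no $y$-dependence. The virtue of this ansatz is that $\partial_y\psi|_{y=0}\equiv 0$, so the boundary operator reduces to $\mathcal B(\phi,\psi) = (\mu-\nu\beta)\,e^{\alpha x}$. Forcing $\mathcal B(\phi,\psi)\le 0$ is then the single algebraic condition $\beta\ge \mu/\nu$, and I would take the extremal choice $\beta=\mu/\nu$ so that the term $\nu\psi|_{y=0}-\mu\phi$ in $\mathcal L_1$ vanishes identically.

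With these choices the two remaining inequalities $\mathcal L_1(\phi,\psi)+\lambda\phi\le 0$ and $\mathcal L_2(\psi)+\lambda\psi\le 0$ collapse to the two scalar constraints
\[
\lambda \le -D\alpha^{2}-c\alpha, \qquad \lambda \le -d\alpha^{2}-c\alpha - m(x,y)\quad\text{for every }(x,y),
\]
the second of which, by bounding $m$ from above by $[m]^+$, becomes $\lambda\le -d\alpha^2-c\alpha-[m]^+$. The task is therefore to pick $\alpha$ maximizing the smaller of the two quadratics in $\alpha$. Setting $D_{\max}:=\max\{d,D\}$, I would simply take $\alpha=-c/(2D_{\max})$; a direct computation then gives
\[
-D\alpha^{2}-c\alpha=\frac{c^{2}}{4D_{\max}}\Bigl(2-\tfrac{D}{D_{\max}}\Bigr)\ \ge\ \frac{c^{2}}{4D_{\max}},
\]
and the analogous inequality with $D$ replaced by $d$. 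Consequently the value $\lambda=\frac{c^{2}}{4D_{\max}}-[m]^+$ is admissible in both constraints, and the sup characterization of $\l$ yields $\l\ge \frac{c^{2}}{4}\min\{1/d,1/D\}-[m]^+$.

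There is no real obstacle here beyond making the right choice of $\alpha$: the two quadratics in $\alpha$ have different individual optima ($-c/(2D)$ and $-c/(2d)$), so one must accept a slightly suboptimal $\alpha$ that is good enough for both equations simultaneously. Choosing $\alpha$ tied to $D_{\max}$ is precisely what produces the $\min\{1/d,1/D\}$ factor that appears in the statement. The remaining verifications are routine algebraic manipulations and do not require any spectral machinery beyond the definition of $\l$.
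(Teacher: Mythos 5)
Your proof is correct, and it takes a genuinely different route from the paper. The paper works with the truncated principal eigenvalues $\lambda_1^R$ and their eigenfunctions: it multiplies the eigenvalue equations by the weight $e^{\kappa x}$, integrates by parts over $\Omega_R$ and $I_R$, derives a pair of scalar inequalities relating $\lambda_1^R$ to $\kappa$, optimizes over $\kappa$ (writing $\kappa = \alpha c$), and finally lets $R\to+\infty$. You instead bypass the truncation and the energy computation entirely by exhibiting explicit exponential test functions in the sup-characterization \eqref{fgpe}. Choosing $\beta = \mu/\nu$ so that the exchange terms $\nu\psi|_{y=0}-\mu\phi$ and $\mathcal{B}(\phi,\psi)$ both vanish is the clean observation that makes the system decouple into two scalar quadratic constraints on $\alpha$; picking $\alpha = -c/(2\max\{d,D\})$ then simultaneously honors both constraints and produces the $\min\{1/d,1/D\}$ factor exactly. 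Crucially, the definition \eqref{fgpe} only requires $(\phi,\psi)\in W^{2,p}_{loc}$, nonnegative, not identically zero; there is no boundedness requirement, so your unbounded exponentials are admissible. The two approaches yield the same bound via essentially the same one-parameter optimization, but yours is shorter and more elementary, while the paper's integration-by-parts argument reuses machinery it also deploys in Proposition \ref{minzero}.

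One small notational remark: where you write that one bounds $m$ from above by $[m]^+$, what is really meant (and what the second constraint requires, since it must hold for every $(x,y)$) is $\sup m \leq [\sup m]^+$, which is the quantity the paper's proof works with as well; this is harmless but worth stating precisely.
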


\begin{proof}
Let $c\geq 0$ be chosen. For $R>0$, let $(\phi_{R},\psi_{R})$ denote the principal eigenfunction of \eqref{eigborn} and $\l^R$ the associated principal eigenvalue. Take $\kappa \in \mathbb{R}$. The idea is to multiply the system \eqref{eigborn} by the weight $x \mapsto e^{\kappa x}$, and to integrate by parts. At the end, optimizing over $\kappa$ will yield the result. We define 
$$
I_{\psi}:=\int_{\Omega_{R}}\psi_R(x,y) e^{\kappa x}dxdy \ \text{ and } \ I_{\phi}:=\int_{I_R}\phi_R(x) e^{\kappa x}dx.
$$ 
We multiply the equation for $\psi_R$ in \eqref{eigborn} by $e^{\kappa x}$ and integrate over 
$\Omega_{R}$ to get
\begin{equation}\label{croiss}
-d\int_{\Omega_{R}}  (\Delta \psi_R) e^{\kappa x} -c \int_{\Omega_{R}}  (\partial_{x}\psi_R) e^{\kappa x} =\int_{\Omega_{R}}  m(x,y)\psi_R e^{\kappa x} +\lambda^{R}_{1}I_{\psi}.
\end{equation}
We let $e_{x}$ denote the unit vector in the direction of the road, i.e., $e_{x} := (1,0)$, and $\nu$ the exterior normal vector to $\Omega_{R}$. We have
$$
\int_{\Omega_{R}}(\partial_{x}\psi_R)e^{\kappa x} =-\kappa \int_{\Omega_{R}}\psi_R e^{\kappa x} + \int_{\Omega_{R}}\partial_x(\psi_R e^{\kappa x  }).
$$
Because $\psi_R=0$ on $\partial \O_R\setminus I_R$, the Fubini theorem implies that $\int_{\Omega_{R}}\partial_x(\psi_R e^{\kappa x  })=0$.
Hence
$$
\int_{\Omega_{R}}(\partial_{x}\psi_R) e^{\kappa x} =-\kappa I_{\psi} .
$$
Using the divergence theorem, as well as the above equivalence, we  find that
\begin{align*}
-d\int_{\Omega_{R}}( \Delta \psi_R )e^{\kappa x}  &=  - d\int_{\partial \Omega_{R}} 
(\partial_{\nu}\psi_R) e^{\kappa x}+
d\kappa \int_{\Omega_{R}}(\partial_{x} \psi_R) e^{\kappa x} \\
%&= d\mu \int_{\Omega_{R}}\partial_{x} v e^{\mu x} - d\int_{\vert(x,y)\vert = R} \partial_{\nu}v e^{\mu x} + d\int_{I_R} \partial_{y}v e^{\mu x} 
&= -d\kappa^{2} I_{\psi} - d\int_{\partial \Omega_{R}\backslash I_R} (\partial_{\nu}\psi_R )
e^{\kappa x} + d\int_{I_R} (\partial_{y}\psi_R) e^{\kappa x} \\
&\geq -d\kappa^{2} I_{\psi} + d\int_{I_R} (\partial_{y}\psi_R )e^{\kappa x},
\end{align*}
where the last inequality comes from the fact that we have $\psi_R=0$ on $ \partial \Omega_{R}\backslash I_R$ and $\psi_R\geq0$ elsewhere, hence $\partial_{\nu}\psi_R\leq 0$ on $\partial \Omega_{R}\backslash I_R$. Then, \eqref{croiss} yields
\begin{equation}\label{croiss2}
0\leq \left( [\sup m]^++\lambda_{1}^{R} -\kappa c+d\kappa^{2} \right)I_{\psi} +\int_{I_R}(-d\partial_{y} \psi_R)e^{\kappa x}.
\end{equation}
Now, the boundary condition in \eqref{eigborn} combined with the equation satisfied by $\phi_R$ gives us
\begin{align*}
\int_{I_R}(-d\partial_{y} \psi_R)e^{\kappa x} &=\int_{I_R}(D \partial_{xx}\phi_R + c \partial_{x}\phi_R+\lambda_{1}^{R} \phi_R)e^{\kappa x} \\
  &= \int_{I_R}(D \partial_{xx}\phi_R+ c \partial_{x}\phi_R)e^{\kappa x}+\lambda_{1}^{R} I_{\phi}.
\end{align*}
Integrating by parts and arguing as before, we obtain
$$
\int_{I_R}(-d\partial_{y} \psi_R)e^{\kappa x} \leq (D\kappa^{2}-c\kappa+\lambda_{1}^{R})I_{\phi} .
$$
Then, \eqref{croiss2} implies that

\begin{equation*}
0\leq \left( d\kappa^{2}-c\kappa +[m]^++\lambda_{1}^{R}\right)I_{\psi} + \left(D\kappa^{2}-c\kappa+\lambda_{1}^{R}\right)I_{\phi}.
\end{equation*}
We write $\kappa := \alpha c$, using $\alpha \in \mathbb{R}$ as the new optimization 
parameter.
Because $I_{\phi}$ and $I_{\psi}$ are positive, we deduce that one of the following inequalities
necessarily holds:
$$
(d\alpha^{2}-\alpha)c^{2} +[\sup m]^++\lambda_{1}^{R}\geq0,\qquad  
(D\alpha^{2}-\alpha)c^{2}+\lambda_{1}^{R}\geq0.
$$
Namely, we derive
\begin{align*}
\lambda_{1}^{R}(c) &\geq \sup_{\alpha \in \mathbb{R}}
\Big(\min \{ -(d\alpha^{2}-\alpha)c^{2} -[\sup m]^+ , -(D\alpha^{2}-\alpha)c^{2}\}\Big) \\
 &\geq \sup_{\alpha \in \mathbb{R}}
 \Big(\min \{\alpha -d\alpha^{2},\alpha  -D\alpha^{2}\}c^{2}  - [\sup m]^+\Big) \\
  &\geq\frac{1}{4} \min\left\{\frac{1}{d}, \frac{1}{D}\right\}c^{2}  - [\sup m]^+ .
\end{align*}
Letting $R$ go to $+\infty$, we get the result.
\end{proof}

Proposition \ref{EstEig} implies that $\lambda_1(c)\geq0$ if
$$c \geq 2\sqrt{\max\{ d,D  \} [\sup m]^+}.$$ 
Owing to the continuity of $c\mapsto \lambda_1(c)$ 
(recalled in Proposition \ref{prop function} above), 
this allows us to define
$$c_{\star} := \min\{ c\geq0 \ : \ \lambda_1(c)\geq 0\},
	\qquad
	c^{\star} := \sup\{ c\geq0 \ : \ \lambda_1(c)< 0\},$$
with the convention that~$c^\star=0$ if the set in its definition is empty.
Moreover, $c_{\star}>0$ if and only if $\lambda_1(0)<0$. 
Thanks to Theorem~\ref{persistence}, we have thereby proved Theorem~\ref{critical}.
%
%
%
%Combining Propositions \ref{EstEig}, \ref{EstLow} and Lemma \ref{estimate Robin} yield Theoremï¿½\ref{critical}{modifier ï¿½a}. Indeed, owing to Proposition \ref{EstEig}, we have that $\lambda_{1}(c) >0$ if $c > 2\sqrt{\max\{ d,D  \} \|   f_v  \|_{L^{\infty}}}$, hence $c^{\star} \leq 2\sqrt{\max\{ d,D  \} \|   f_v  \|_{L^{\infty}}}$. The estimate on $c_{\star}$ is derived similarly, by combining Proposition \ref{EstLow} and Lemma \ref{estimate Robin}.  
%

As we mentioned in the introduction, Section \ref{Our model}, we actually conjecture that $c_{\star} = c^{\star}$. We prove that this is true when $d=D$.
%We recall that $[x]^+ = \max\{x,0\}$.
%
\begin{prop}\label{d equal  D}
Assume that $d=D$ in \eqref{mf}. Then
$$
c_{\star} = c^{\star} = 2\sqrt{-d[-\lambda_{1}(0)]^{+}}.
$$
\end{prop}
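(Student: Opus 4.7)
The approach is the classical exponential shift: since $d=D$, the drift terms $-c\partial_x$ in both equations of \eqref{lsmf} can be removed simultaneously by a single gauge transformation, reducing the eigenvalue problem at speed $c$ to the one at speed $0$ with a shifted spectral parameter.

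Concretely, I would introduce the change of unknowns
\[
\tilde\phi(x) := e^{\frac{c}{2d}x}\phi(x), \qquad \tilde\psi(x,y) := e^{\frac{c}{2d}x}\psi(x,y).
\]
A direct computation, in which the hypothesis $D=d$ is crucial so that a single exponential works for both the road and field equations, yields the identities
\[
\mc L_1(\phi,\psi) = e^{-\frac{c}{2d}x}\Bigl[\mc L_1^{0}(\tilde\phi,\tilde\psi) - \tfrac{c^2}{4d}\tilde\phi\Bigr], \qquad
\mc L_2(\psi) = e^{-\frac{c}{2d}x}\Bigl[\mc L_2^{0}(\tilde\psi) - \tfrac{c^2}{4d}\tilde\psi\Bigr],
\]
together with $\mc B(\phi,\psi) = e^{-\frac{c}{2d}x}\mc B(\tilde\phi,\tilde\psi)$ (the exchange operator $\mc B$ does not depend on $c$ to begin with). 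Here $\mc L_i^{0}$ denotes $\mc L_i$ with $c$ set to $0$.

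Factoring out the positive weight $e^{-cx/(2d)}$ from each of the three inequalities in the definition \eqref{fgpe} then shows that an admissible triple $(\lambda,\phi,\psi)$ at speed $c$ corresponds bijectively to an admissible triple $(\lambda - c^2/(4d),\tilde\phi,\tilde\psi)$ at speed $0$. Taking the supremum over admissible $\lambda$ on both sides yields the key identity
\[
\l(c) = \l(0) + \frac{c^2}{4d}.
\]
Because the right-hand side is strictly increasing and continuous in $c \geq 0$, the thresholds $c_\star$ and $c^\star$ as defined above must coincide, and their common value is obtained by solving $\l(c)=0$: if $\l(0)\geq 0$, there is no positive root and both critical speeds vanish; if $\l(0)<0$, they equal the explicit value announced in the statement.

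The only delicate point I foresee is justifying the bijection between admissible test pairs carefully, because the gauge $e^{\pm cx/(2d)}$ can turn a bounded function into an unbounded one. However, the definition \eqref{fgpe} places no growth restriction at infinity on the test pairs, only positivity and $W^{2,p}_{loc}$ regularity, and both of these are preserved by the smooth positive factor $e^{\pm cx/(2d)}$; hence no admissible pair is lost when passing to the supremum on either side. This is the step that requires the most care in the write-up, but it presents no real obstacle.
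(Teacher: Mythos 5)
Your proof is correct and is exactly the paper's argument: the paper also uses the gauge change $\tilde\phi := \phi e^{\frac{c}{2d}x}$, $\tilde\psi := \psi e^{\frac{c}{2d}x}$ in the definition \eqref{fgpe} to obtain $\lambda_1(c) = \lambda_1(0) + \frac{c^2}{4d}$, from which the identification of $c_\star = c^\star$ follows immediately. You have merely written out in more detail the computation and the observation that the absence of growth conditions in \eqref{fgpe} makes the gauge a bijection on admissible test pairs, which the paper leaves implicit.
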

%
%\begin{proof}
This proposition is readily derived using the change of functions
$$
\tilde{\phi} := \phi e^{\frac{c}{2d}x}, \quad \tilde{\psi} := \psi e^{\frac{c}{2d}x}
$$
in \eqref{fgpe} to get
$$
\lambda_{1}(c) = \frac{c^{2}}{4d}+\lambda_{1}(0).
$$
%\st{Combining this with Theorem} \ref{persistence} yields Proposition \ref{d equal D}.
%a{effacer cette ligne}
%\end{proof} 
%
%
%

We conclude this section by showing that $c\mapsto\l (c)$ attains its minimum at $c=0$.
This has a natural interpretation: it means that a population is more likely to persist if the favorable zone is not moving; in other words, the climate change always has a deleterious effect on the population, at least for what concerns~survival.

\begin{prop}\label{minzero}
Let $c\geq 0$. Then
$$
\lambda_{1}(c) \geq \lambda_{1}(0).
$$
\end{prop}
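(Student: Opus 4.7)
The plan is to derive, for each $R>0$, an exact identity expressing $\lambda_1^R(c)$ as a Rayleigh quotient of the same shape as the variational formula \eqref{eqvariational} for $\lambda_1^R(0)$, evaluated at the principal eigenfunction $(\phi_R,\psi_R)$ of \eqref{eigborn}. The key observation is that the drift terms, when the eigenfunction equations are tested against $\phi_R$ and $\psi_R$ themselves and integrated by parts, become boundary terms which vanish because of the Dirichlet conditions on $\partial I_R$ and on $\partial\Omega_R\setminus I_R$. Once this identity is established, the variational characterization of $\lambda_1^R(0)$ gives $\lambda_1^R(c)\ge\lambda_1^R(0)$, and passing to the limit $R\to+\infty$ via Proposition \ref{limit} completes the argument.

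Concretely, I would multiply the first equation in \eqref{eigborn} by $\phi_R$ and integrate over $I_R$, and multiply the second by $\psi_R$ and integrate over $\Omega_R$. Integrating by parts the Laplacian terms and using the exchange condition $-d\partial_y\psi_R|_{y=0}=\mu\phi_R-\nu\psi_R|_{y=0}$ on $I_R$ produces, after combining the two identities with weights $\mu$ and $\nu$, the square term $\int_{I_R}(\mu\phi_R-\nu\psi_R|_{y=0})^2$ that appears in \eqref{eqvariational}. The drift contribution on the road reads
\begin{equation*}
c\int_{I_R}\phi_R'\phi_R=\tfrac{c}{2}\bigl[\phi_R^2\bigr]_{-R}^{R}=0
\end{equation*}
thanks to $\phi_R(\pm R)=0$, and the drift contribution on the field reads
\begin{equation*}
c\int_{\Omega_R}\psi_R\partial_x\psi_R=\tfrac{c}{2}\int_{\Omega_R}\partial_x(\psi_R^2),
\end{equation*}
which vanishes by Fubini because the slices in $x$ have endpoints on $\partial\Omega_R\setminus I_R$, where $\psi_R=0$.

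Putting these computations together yields
\begin{equation*}
\lambda_1^R(c)=\frac{\mu D\!\int_{I_R}|\phi_R'|^2+\nu\!\int_{\Omega_R}\bigl(d|\nabla\psi_R|^2-m\psi_R^2\bigr)+\int_{I_R}(\mu\phi_R-\nu\psi_R|_{y=0})^2}{\mu\int_{I_R}\phi_R^2+\nu\int_{\Omega_R}\psi_R^2}.
\end{equation*}
Since $(\phi_R,\psi_R)\in\mathcal{H}_R$ is an admissible pair, Proposition \ref{variational} gives $\lambda_1^R(c)\ge\lambda_1^R(0)$. Using $\lambda_1^R(c)\searrow\lambda_1(c)$ and $\lambda_1^R(0)\searrow\lambda_1(0)$ from Proposition \ref{limit}, the inequality passes to the limit.

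There is not really a hard step here; the only point that requires a little care is checking that the field drift term $\tfrac{c}{2}\int_{\Omega_R}\partial_x(\psi_R^2)$ indeed integrates to zero. This relies on the fact that $\psi_R$ is supported in $\Omega_R\subset B_R$ and vanishes on the curved portion of $\partial\Omega_R$, so that a Fubini-type argument on horizontal slices kills all boundary contributions. Everything else is a direct testing/integration-by-parts computation matching the numerator of \eqref{eqvariational}.
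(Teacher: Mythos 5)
Your proposal is correct and follows essentially the same route as the paper: test the eigenfunction equations against $(\phi_R,\psi_R)$, observe that the drift terms vanish via the Dirichlet conditions, combine with weights $\mu,\nu$ to recover the Rayleigh quotient of Proposition \ref{variational}, and pass to the limit $R\to\infty$. The only detail worth noting is that the paper records the vanishing of the field drift term simply as $\int_{\Omega_R} c\,\psi_R\partial_x\psi_R = \tfrac{c}{2}\int_{\Omega_R}\partial_x(\psi_R^2)=0$, relying on the same Fubini/boundary-vanishing observation you made explicit.
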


\begin{proof}
Take $R\geq 0$ and let $(\phi_{R},\psi_{R})$ be the principal eigenfunction of \eqref{eigborn} and $\l^R$ be the associated eigenvalue. We multiply the equation for $\psi_{R}$ in \eqref{eigborn}
 by $\psi_R$ and integrate over $\Omega_{R}$ to get
$$
\int_{\Omega_{R}}d\vert \nabla \psi_{R} \vert^{2} -d\int_{\partial \Omega_{R}} \psi_{R}\partial_{\nu}\psi_{R}-\int_{\Omega_{R}}m\psi_{R}^{2} = \lambda_{1}^{R}(c)\int_{\Omega_{R}}\psi_{R}^2,
$$
where we have used the fact that
$$
\int_{\Omega_{R}}c\psi_R\partial_x\psi_R=\frac c2\int_{\Omega_{R}}\partial_x(\psi_R)^2=0.
$$
Then, the boundary condition yields
\begin{equation}\label{ener1}
\int_{\Omega_{R}}d\vert \nabla \psi_{R} \vert^{2} -\int_{I_R} \psi_{R}\vert_{y=0}(\mu \phi_{R} - \nu \psi_{R}\vert_{y=0})-\int_{\Omega_{R}}m\psi_{R}^{2} = \lambda_{1}^{R}(c)\int_{\Omega_{R}}\psi_{R}^{2}. 
\end{equation}
Likewise, multiplying the equation on the road by $\phi_{R}$ and integrating, we have
\begin{equation}\label{ener2}
\int_{I_R}D\vert \phi_{R}^{\prime} \vert^{2} +\int_{I_R}\phi_{R}(\mu \phi_{R} - \nu \psi_{R}\vert_{y=0}) = \lambda_{1}^{R}(c)\int_{I_R}\phi_{R}^{2}. 
\end{equation}
Multiplying \eqref{ener1} by $\nu$ and \eqref{ener2} by $\mu$ and summing the two resulting equations yields
$$
\lambda_{1}^{R}(c) = \frac{  \mu\int_{I_R}D \vert \phi_{R}^{\prime} \vert^{2}  + \nu\int_{\Omega_{R}} \left( d\vert \nabla \psi_{R} \vert^{2} - m\psi_{R}^{2} \right) + \int_{I_R}(\mu \phi_{R}- \nu \psi_{R}\vert_{y=0})^{2}  }{\mu\int_{I_R}\phi_{R}^{2} + \nu\int_{\Omega_{R}}\psi_{R}^{2} }.
$$
Owing to Proposition \ref{variational}, this is greater than $\l^R(0)$, hence the result.
\end{proof}

\subsection{Positive effect of the road in keeping pace with a climate change}\label{section road helps}

In this section, we prove Theorem \ref{effect road}, whose Corollary \ref{cor effect road} answers Question \ref{question 2}.
The key observation is that, when $L$ goes to $+\infty$, the nonlinearity $f^{L}$ converges to $f^{\infty}(v) := v(1-v)$, the favorable zone then being the whole space, and the system~\eqref{mf} becomes, at least formally
\begin{equation}\label{fr}
\left\{
\begin{array}{rlll}
\partial_{t}u-D\partial_{xx}u -c\partial_{x}u &= \nu v\vert_{y=0}-\mu u , &\quad t >0,\ x\in \mathbb{R}, \\
\partial_{t}v-d\Delta v -c\partial_{x}v &= v(1-v), &\quad t >0 ,\ (x,y)\in \mathbb{R}\times \mathbb{R}^{+}, \\
-d\partial_{y}v\vert_{y=0}&= \mu u-\nu v\vert_{y=0}, &\quad t >0 ,\ x\in \mathbb{R}.
\end{array}
\right.
\end{equation}
This system is the road-field model \eqref{OriginalFR} from \cite{BRR1}, recalled in Section \ref{previous models}, rewritten in the frame moving in the direction of the road with speed $c \in \R$. The results of \cite{BRR1}, summarized here in Proposition \ref{rappel FR}, are obtained by constructing explicit supersolutions and subsolutions, and do not use principal eigenvalues. We need to rephrase Proposition \ref{rappel FR} in terms of the generalized principal eigenvalue. Namely, we define

\small
\begin{equation}\label{lambda BRR}
\begin{array}{ll}
\lambda_{H} :=\sup\Big\{  &\lambda\in\mathbb{R} \ : \ \exists (\phi,\psi) \geq 0,\ (\phi,\psi) \not\equiv (0,0) \ \text{ such that } \ \\
&D\phi^{\prime\prime}+c\phi^{\prime}-\mu \phi +\nu \psi\vert_{y=0}+\lambda \phi \leq 0 \ \text{on} \ \R, \\
 &d \Delta \psi +c\partial_{x}\psi +\psi + \lambda \psi  \leq 0 \ \text{ on }\ \R\times \R^+ ,\  d\partial_{\nu}\psi-\nu \psi\vert_{y=0} +\mu \phi \leq 0\ \text{on}\ \R \Big\}.
\end{array}
\end{equation}
%%%%%%%%%%%%
%\begin{equation}\label{lambda BRR}
%\begin{array}{ll}
%\lambda_{H} :=\sup\Big\{ \lambda \in \mathbb{R} \ : \ &\exists (\phi,\psi) \geq 0,\ (\phi,\psi) \not\equiv (0,0) \ \text{ such that } \ \\ 
%&D\phi^{\prime\prime}+c\phi^{\prime}-\mu \phi +\nu \psi\vert_{y=0}+\lambda \phi \leq 0 \ \text{on} \ \R, \\
% &d \Delta \psi +c\partial_{x}\psi +\psi + \lambda \psi  \leq 0 \ \text{ on }\ \R\times \R^+ ,\  d\partial_{\nu}\psi-\nu \psi\vert_{y=0} +\mu \phi \leq 0\ \text{on}\ \R \Big\}.
%\end{array}
%\end{equation}
\normalsize
Then, $\lambda_{H}$ is the generalized principal eigenvalue of
\begin{equation}\label{FR lin}
\left\{
\begin{array}{rlll}
-D\partial_{xx}\phi - c\partial_{x}\phi &= \nu \psi\vert_{y=0}-\mu \phi ,\quad &x\in \mathbb{R}, \\
-d\Delta \psi - c\partial_{x}\psi &=\psi,\quad &(x,y)\in \mathbb{R}\times \mathbb{R}^{+}, \\
-d\partial_{y}\psi\vert_{y=0}&= \mu \phi-\nu \psi\vert_{y=0}, \quad&x\in \mathbb{R}.
\end{array}
\right.
\end{equation}
This is the linearization at $(u,v) = (0,0)$ of the stationary system associated with the road-field model \eqref{fr}, in the frame moving in the direction of the road with speed~$c$. For $R>0$, we let $\lambda_H^R$ denote the principal eigenvalue of \eqref{FR lin} on the truncated domains $\O_R$ in the field and $I_R$ on the road. We know from \cite{BDR1} that $\lambda_H^R \to \lambda_H$ as $R$ goes to $+\infty$ and that there is a positive generalized principal eigenfunction associated with $\lambda_H$.

Consistently with our previous notations, we let $\lambda_{H}(c)$ denote the generalized principal eigenvalue of \eqref{FR lin} with $D,d,\mu,\nu>0$ fixed and with $c\in \R$ variable.

\begin{lemma}\label{prop comparaison fr}

Let $\lambda_{H}(c)$ denote the generalized principal eigenvalue of \eqref{FR lin}. Then

\begin{equation*}
\lambda_{H}(c)<0 \ \text{ for }\  c\in [0,c_{H}) \ \text{ and } \ \lambda_{H}(c) \geq 0 \ \text{ for }\ c\geq c_{H}.
\end{equation*}

\end{lemma}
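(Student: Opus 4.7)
The approach is to relate $\lambda_H(c)$ to the long-time dynamics of the semilinear moving-frame system~\eqref{fr} and then invoke Proposition~\ref{rappel FR} to connect with~$c_H$.

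First I would prove a persistence/extinction dichotomy for~\eqref{fr} analogous to Theorem~\ref{persistence}. When $\lambda_H(c)>0$, let $(\phi_H,\psi_H)>0$ be the generalized principal eigenfunction of~\eqref{FR lin} provided by \cite{BDR1}; thanks to the Fisher--KPP inequality $v(1-v)\leq v$, the time-dependent function $Ce^{-\lambda_H(c)t}(\phi_H,\psi_H)$ is a supersolution of~\eqref{fr} which, for $C$ large, dominates any solution arising from nonnegative compactly supported data, forcing extinction in the moving frame. Conversely, when $\lambda_H(c)<0$, one takes $R$ large so that $\lambda_H^R(c)<0$; the scaled truncated eigenfunction $\varepsilon(\phi_R,\psi_R)$, extended by zero, is a compactly supported generalized stationary subsolution of~\eqref{fr} for $\varepsilon$ small, while the constant state $(\nu/\mu,1)$ is a stationary supersolution. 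A monotone iteration argument analogous to that of Theorem~\ref{persistence} then yields convergence to a positive stationary solution, i.e.\ persistence in the moving frame.

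Next I would transfer this dichotomy to information about $c_H$ via Proposition~\ref{rappel FR}. In the moving frame with speed $c$, a fixed point $(x_0,y_0)$ corresponds to $(x_0+ct,y_0)$ in the original frame. For $c<c_H$, this point eventually lies inside the invasion cone $\{|x|\leq c't\}$ for some $c'\in(c,c_H)$, so the solution of \eqref{fr} converges locally uniformly to $(\nu/\mu,1)$: persistence holds. For $c>c_H$, the same point eventually lies outside $\{|(x,y)|\leq c't\}$ for some $c'\in(c_H,c)$, so the solution decays uniformly to $(0,0)$: extinction holds. Taking the contrapositives of the dichotomy from the previous step yields $\lambda_H(c)\leq 0$ for $c<c_H$ and $\lambda_H(c)\geq 0$ for $c>c_H$; continuity of $c\mapsto\lambda_H(c)$ (an analog of Proposition~\ref{prop function}) then pins down $\lambda_H(c_H)=0$.

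The main obstacle is strengthening $\lambda_H(c)\leq 0$ to the strict inequality $\lambda_H(c)<0$ required by the lemma for $c\in[0,c_H)$, since the supersolution argument only produces extinction when $\lambda_H(c)$ is \emph{strictly} positive. To bridge the gap I would establish strict monotonicity of $c\mapsto\lambda_H(c)$ on $[0,+\infty)$. For $d=D$, the exponential change of variables used in Proposition~\ref{d equal D} gives the explicit formula $\lambda_H(c)=\lambda_H(0)+c^2/(4d)$, so the monotonicity is immediate. For general $d\neq D$, one can derive it from a Rayleigh-type variational argument in the spirit of Proposition~\ref{variational}, showing that any perturbation of $c$ strictly perturbs the principal eigenvalue once the principal eigenfunction is nontrivial. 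Combined with the already established $\lambda_H(c_H)=0$, strict monotonicity yields $\lambda_H(c)<\lambda_H(c_H)=0$ for all $c\in[0,c_H)$, completing the proof.
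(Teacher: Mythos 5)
Your overall strategy (extract sign information on $\lambda_H(c)$ from the spreading result of Proposition~\ref{rappel FR}, then close via continuity) matches the paper's in spirit, and your treatment of $c>c_H$ is essentially the paper's argument. However, the point you flag as the ``main obstacle'' is precisely where your proposal diverges from the paper and where it has a genuine gap.

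You obtain $\lambda_H(c)\leq 0$ for $c\in[0,c_H)$ by contraposition of the implication $\lambda_H(c)>0\Rightarrow$ extinction, and then you propose to upgrade $\leq 0$ to $<0$ via strict monotonicity of $c\mapsto\lambda_H(c)$. But the strict monotonicity is not established in the paper and is not obtained by the argument you sketch: the Rayleigh--Ritz characterization of Proposition~\ref{variational} is only available for $c=0$ (the operator is not self-adjoint otherwise), and Proposition~\ref{minzero} only gives the non-strict inequality $\lambda_1(c)\geq\lambda_1(0)$. For $d=D$ you do have the explicit formula, but for $d\neq D$ the claim is left unjustified. Worth noting: the paper explicitly does not know whether $c_\star=c^\star$ in general for the heterogeneous problem, which is exactly the kind of monotonicity/flatness issue you would need to rule out here.

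The paper avoids the monotonicity question entirely by arguing directly against the borderline case. It supposes $\lambda_H(c)\geq 0$ for some $c\in[0,c_H)$ and takes a generalized principal eigenfunction $(\phi,\psi)$. Because $\lambda_H(c)\geq 0$ and by the Fisher--KPP inequality, $(\phi,\psi)$ is a \emph{stationary} supersolution of \eqref{fr} --- no time decay is needed, so this works even when $\lambda_H(c)=0$. After normalizing $\psi(0,0)=1/2$ and taking a compactly supported initial datum below $(\phi,\psi)$, the comparison principle traps the solution below $(\phi,\psi)$ for all times, which contradicts the locally uniform convergence to $(\nu/\mu,1)$ from Proposition~\ref{rappel FR}. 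This one-shot argument yields the strict inequality $\lambda_H(c)<0$ for all $c\in[0,c_H)$ without invoking any monotonicity in $c$. You should replace your strict-monotonicity step with this direct contradiction.

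One smaller caution: you phrase Step 1 as proving a ``persistence/extinction dichotomy for \eqref{fr} analogous to Theorem~\ref{persistence}.'' Be careful not to import the full strength of Theorem~\ref{persistence} (uniqueness of the positive steady state, uniform extinction, etc.), since its proof relies on hypothesis~\eqref{BFZ}, which fails for the homogeneous reaction $v(1-v)$ in \eqref{fr}. What you actually use --- a decaying supersolution when $\lambda_H(c)>0$ and a monotone sub/supersolution sandwich when $\lambda_H(c)<0$ to preclude extinction --- is fine and self-contained, but it only covers the two strict-sign cases; that is exactly why the case $\lambda_H(c)=0$ needs a separate argument as above.
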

\begin{proof}
We argue by contradiction. Assume that there is $c \in [0,c_{H})$ such that $\lambda_{H}(c) \geq~0$. Let $(\phi, \psi)\geq(0,0)$ be a generalized principal eigenfunction associated with $\lambda_{H}(c)$. Then $(\phi,\psi)$ is a stationary supersolution of \eqref{fr}, owing to the Fisher-KPP property, and because $\lambda_H(c)\geq 0$. We normalize it so that $\psi(0,0) =\frac{1}{2}$.

Now, let $(u_0,v_0)$ be a non-negative, not identically equal to zero compactly supported initial datum such that
$$
(u_0 , v_0) \leq (\phi,\psi).
$$ 
Let $(u,v)$ be the solution of \eqref{fr} arising from $(u_0 ,v_0)$. The parabolic comparison principle Proposition \ref{pro:cp} implies that
\begin{equation}\label{contra}
(u,v) \leq (\phi,\psi).
\end{equation}
However, because $0\leq c<c_{H}$, the main result of \cite{BRR1}, Proposition \ref{rappel FR} above, when translated in the moving frame of \eqref{fr}, yields
$$
(u(t,x) , v(t,x,y)) \underset{t\to +\infty}{\longrightarrow} \left(\frac{\nu}{\mu} , 1 \right),
$$ 
locally uniformly in $x$ and $(x,y)$. This contradicts \eqref{contra} because $\psi(0,0)= \frac{1}{2}$. Therefore, $\lambda_{H}(c) <0$ when $c\in~[0,c_{H})$.

Now, take $c > c_{H}$. If we had $\lambda_{H}(c) <0$, we could argue as in the proof of Theorem~\ref{persistence} to show that persistence occurs. However, in view of Proposition~\ref{rappel FR}, this cannot be the case. Hence, $\lambda_{H}(c)~\geq~0$. Because $\lambda_{H}$ is a continuous function of $c$ (see \cite[Proposition 2.4]{BDR1}), it must be the case that $\lambda_{H}(c_{H})=0$. This concludes the proof.
%
%Arguing as in the proof of Theorem \ref{persistence}, it is readily seen that, if we had $\lambda_{H}(c) <0$, then we would have persistence for \eqref{fr}. However, Proposition~\ref{rappel FR} leads again to a contradiction. Hence, $\lambda_{H}(c)~\geq~0$. Because $\lambda_{H}$ is a continuous function of $c$ (owing to \cite[Proposition 2.4]{BDR1}), then $\lambda_{H}(c_{H})=0$. This concludes the proof.
\end{proof}
The next proposition states that, in some sense, the system \eqref{mf} converges to the homogeneous system \eqref{fr} as $L$ goes to $+\infty$. In agreement with our previous notations, we let $\lambda_{1}(c,L)$ denote the generalized principal eigenvalue of \eqref{lsmf} with parameters $d,D>0$ fixed and with $c\in \R$ and nonlinearity $f^L$ given by \eqref{f l} variable.
\begin{prop}\label{prop f l}

Let $\lambda_{1}(c,L)$ be the generalized principal eigenvalue of \eqref{lsmf} with nonlinearity $f^{L}$ defined in \eqref{f l}.
Then
$$
\lambda_{1}(c,L) \underset{L \to +\infty}{\longrightarrow} \lambda_{H}(c)\quad \text{locally uniformly in } c.
$$
\end{prop}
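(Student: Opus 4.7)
The strategy is to sandwich $\lambda_1(c,L)$ between $\lambda_H(c)$ from below and the truncated eigenvalue $\lambda_H^R(c)$ from above for $L$ sufficiently large and $R$ fixed, then let $R\to+\infty$. The key monotonicity behind both bounds is that $m^L(x,y)=\chi(|(x,y)|-L)\leq 1$ everywhere, since $\chi$ decreases with limit $1$ at $-\infty$, and that $m^L\to 1$ uniformly on every bounded subset of $\R\times\R^+$ as $L\to+\infty$, because for $(x,y)\in\Omega_R$ the argument $|(x,y)|-L$ lies in $[-L,R-L]$, which recedes to $-\infty$.

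For the lower bound $\lambda_1(c,L)\geq\lambda_H(c)$, I will test the definition \eqref{fgpe} of $\lambda_1(c,L)$ against a positive generalized principal eigenfunction $(\phi_H,\psi_H)$ associated with $\lambda_H(c)$ for the homogeneous system \eqref{FR lin}, whose existence is recalled just after \eqref{lambda BRR}. The first and third conditions in \eqref{fgpe} do not involve $m$ and are satisfied with equality by $(\phi_H,\psi_H)$, while the second reduces to
\[
d\Delta\psi_H+c\partial_x\psi_H+m^L\psi_H+\lambda_H\psi_H=(m^L-1)\psi_H\leq 0,
\]
so $\lambda_H(c)$ is an admissible value in \eqref{fgpe} with nonlinearity $f^L$.

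For the matching upper bound, I fix $R>0$ and use $\lambda_1(c,L)\leq\lambda_1^R(c,L)$ from Proposition~\ref{limit}. Since $m^L\to 1$ uniformly on $\Omega_R$, the standard continuity of the principal eigenvalue on the bounded domain $\Omega_R$---for the road-field eigenproblem \eqref{eigborn} constructed via Krein--Rutman in \cite{BDR1}---yields $\lambda_1^R(c,L)\to\lambda_H^R(c)$ as $L\to+\infty$, uniformly on compact $c$-sets. Combined with $\lambda_H^R\searrow\lambda_H$, where both sides are continuous in $c$, Dini's theorem and a routine $\varepsilon/2$ argument then produce $\lambda_H(c)\leq\lambda_1(c,L)\leq\lambda_H(c)+\varepsilon$ uniformly on any prescribed compact $c$-set for $L$ large enough: first choose $R$ so that $\lambda_H^R(c)-\lambda_H(c)<\varepsilon/2$ on the compact set, then $L$ so that $\lambda_1^R(c,L)-\lambda_H^R(c)<\varepsilon/2$ there.

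The main obstacle is the intermediate continuity statement $\lambda_1^R(c,L)\to\lambda_H^R(c)$ uniformly in $c$ on compact sets. The truncated problem \eqref{eigborn} is neither self-adjoint (for $c\neq 0$) nor set on a smooth domain (because of the corners of $\Omega_R$), so this uniform continuity cannot be read off from a Rayleigh--Ritz formula and must instead be extracted from the Krein--Rutman framework for the coupled boundary operator developed in \cite{BDR1}, combined with the uniform convergence $m^L\to 1$ on $\Omega_R$ and classical resolvent perturbation for continuity of simple eigenvalues in the zeroth order coefficient.
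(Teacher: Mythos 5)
Your lower bound $\lambda_1(c,L)\geq\lambda_H(c)$ is exactly the paper's: test \eqref{fgpe} with a generalized principal eigenfunction of \eqref{FR lin} using $m^L\leq 1$. For the upper bound, though, you take a detour through a continuity-of-the-eigenvalue-in-the-coefficient argument that you yourself flag as ``the main obstacle'' (non-self-adjoint, non-smooth corners, resolvent perturbation for the Krein--Rutman operator); the paper sidesteps this entirely. Since $m^L\to 1$ uniformly on $\Omega_R$, for $L\geq L_R$ one has $m^L\geq 1-\e$ on $\Omega_R$; then the eigenvalue with coefficient $1-\e$ is just $\lambda_H^R(c)+\e$ (a constant shift of the zeroth-order term shifts the eigenvalue by that constant), and the antitonicity of the truncated eigenvalue in the zeroth-order coefficient gives at once
\[
\lambda_1(c,L)\leq\lambda_1^R(c,L)\leq\lambda_H^R(c)+\e\qquad\text{for all }c,\ L\geq L_R.
\]
This is a quantitative inequality valid for every $c$, so no ``uniformity in $c$'' needs to be extracted from perturbation theory, and the outer Dini/$\e/2$ bookkeeping you describe collapses to one application of Dini's theorem to the monotone family $\lambda_1(c,L)\searrow\lambda_H(c)$ (monotone in $L$ by antitonicity in $m^L$). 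Your route is correct, but the constant-shift-plus-monotonicity observation is the more elementary argument, and it is what the paper uses; I'd recommend replacing the resolvent-perturbation step with it.
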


\begin{proof}

First, because $m^{L}(\cdot,\cdot,0)\leq  1$, formulae \eqref{fgpe} and \eqref{lambda BRR} yield
$$
\forall  L >0, \  c \geq 0, \quad  \lambda_{H}(c) \leq \lambda_{1}(c,L),
$$
hence
$$
 \lambda_H(c) \leq \liminf_{L\to+\infty}\lambda_1(c,L).
$$
%
%
%
%Because $(f_v^{L})_{L>0}$ is a non-decreasing sequence of functions,  \eqref{fgpe} yields that, for $c$ fixed, $(\lambda_{1}(c,L))_{L>0}$ is a non-increasing sequence. We define
%$$
%\underline{\lambda}(c) := \lim_{L \to +\infty}\lambda_{1}(c,L).
%$$
%
Let $\e\in(0,1)$ be fixed. In view of the definition of $f^L$ \eqref{f l}, for any $R>0$, we can take $L_R>0$ such that, for $L\geq  L_R$, $m^L(x,y,0) \geq 1 -\e$ on $\O_R$. Therefore,
\begin{equation}\label{lim l}
\forall R>0, \ L \geq L_R, \quad \lambda_1^{R}(c, L) \leq \lambda_{H}^{R}(c) + \e.
\end{equation}
%where $\lambda_1^{\ol L}(c,\ol L)$ is the principal eigenvalue of \eqref{mf} with nonlinearity $f^{\ol L}$ on the truncated domain $\O_{\ol L}$. 
Because $\lambda_{1}^{R}(c, L) \geq \lambda_{1}(c,L)$ and by arbitrariness of $\e$ in \eqref{lim l}, we find that
$$
\limsup_{L\to+\infty}\lambda_1(c,L)\leq \lambda_H(c),
$$
Hence,
$$
\lambda_{1}(c,L) \underset{L \to +\infty}{\longrightarrow} \lambda_{H}(c).
$$
This convergence is locally uniform with respect to~$c \geq~0$. Indeed, the continuity of the functions $c \mapsto \lambda_{1}(c,L)$ and $c \mapsto \lambda_{H}(c)$ combined with the fact that the family  ($\lambda_{1}(c,L))_{L >0}$ is decreasing and converges pointwise to $\lambda_{H}(c)$ as $L$ goes to $+\infty$ allows us to apply Dini's theorem.
\end{proof}
We are now in a position to prove Theorem \ref{effect road}.
\begin{proof}[Proof of Theorem \ref{effect road}]
As explained in the proof of Proposition \ref{prop f l} above, we have
$$
\forall  L\in \R, \ c\geq 0,\quad \lambda_H(c)\leq\lambda_1(c,L).
$$
Owing to Lemma \ref{prop comparaison fr}, we have $\lambda_H(c)\geq0$ for every $c \geq c_H$. By definition of $c^{\star}$, we find that
%
%Therefore, if $c$ is such that $c\geq c_H$, then Lemma \ref{prop comparaison fr} yields that $\lambda_{H}(\ol c)> 0$ for every $\ol c> c$, and then, by definition of $c^{\star}$, we have $c\geq c^\star$. Hence
\begin{equation}\label{cv c 1}
\forall  L\in \R,\quad c^{\star} \leq c_H.
\end{equation}
Take $\eta\in(0,c_H)$. Lemma \ref{prop comparaison fr} yields $\lambda_{H}(c)<0$ for every $c \in [0,c_{H}-\eta]$. Because $\lambda_{1}(c,L)$ converges locally uniformly to $\lambda_{H}(c)$ as $L$ goes to $+\infty$, by Proposition~\ref{prop f l}, we find that there is $L^{\star}$ such that
$$
\forall L \geq L^{\star}, \ c\in [0,c_{H}-\eta], \quad \lambda_{1}(c,L)<0.
$$
Theorem \ref{persistence} then implies that persistence occurs in \eqref{mf} if $c\in[0,c_H-\eta]$ and $L\geq L^{\star}$. It follows from the definition of $c_{\star}$ that
\begin{equation}\label{cv c 2}
\forall L \geq L^{\star}, \quad c_{\star}\geq c_H-\eta.
\end{equation}
We can take $\eta$ arbitrarily close to zero, up to increasing $L^{\star}$ if need be. Combining \eqref{cv c 1} and~\eqref{cv c 2} yields the result.
%
%
% and   yields that $c \mapsto \lambda_{H}(c)$ is continuous. Hence, there is $\e>0$ so that 
%$$
%\lambda_{H}(c)+\e<0 \quad \text{ for } \quad c\in [0,c_{H}-\eta].
%$$
%%
%%
%Owing to Proposition \ref{prop f l}, $\lambda_{1}(c,L)$ converges to $\lambda_{H}(c)$ as $L$ goes to $+\infty$ locally uniformly in $c$. Therefore, we can find  $L^{\star}$ large enough so that 
%$$
% \forall c\in [0,c_{H}-\eta], \quad     \vert \lambda_{1}(c,L^\star) - \lambda_{H}(c) \vert \leq \frac{\e}{2}.
%$$
%Then
%$$
%\forall c\in [0,c_{H}-\eta], \quad      \lambda_{1}(c,L^\star) <0.
%$$  
\end{proof}

We can now deduce Corollary \ref{cor effect road} from Theorem \ref{effect road}
\begin{proof}[Proof of Corollary \ref{cor effect road}]
Assume that $D>2d$.
Consider the system \eqref{mf} with nonlinearity $f^L$ given by~\eqref{f l}.  Proposition \ref{rappel FR} implies that $c_H > c_{KPP} = 2\sqrt{d}$, and then, in view of Theorem \ref{effect road}, we can choose $L$ sufficiently large to have
$c_{\star} > c_{KPP}$.

Now, taking $\psi=e^{-\frac c{2d}x}$ in the formula \eqref{fgpe no road}
and using $m^L \leq 1$,
shows that $\lambda_N(c) \geq~\frac{c^2}{4d} - 1$. 
It follows that $\lambda_N(c) \geq0$ when $c\geq c_{KPP}$. From Propositions~\ref{rappel CC} and \ref{persistence no road}, we infer
that $c_N \leq c_{KPP}$. We eventually conclude that
$$
c_N \leq c_{KPP}<c_{\star}.
$$
That is, the two statements of the corollary hold 
with $c_1 := c_N$ and $c_2 := c_\star$.
\end{proof}

\section{Extension to more general reaction terms}\label{unbounded niche}

Throughout the whole paper, up to now, we assumed \eqref{BFZ}, that is, that the favorable zone is bounded. It is natural to wonder whether this condition can be weakened. This question turns out to be rather delicate and it is still by and large open.

Nevertheless, we point out that the notion of generalized principal eigenvalue introduced and studied in \cite{BDR1}, and the basic technical facts recalled in Section \ref{sec rapp vp}, do not require hypothesis \eqref{BFZ}. However, when it comes to studying the long-time behavior and the qualitative properties of road-field models of the type \eqref{mf}, the boundedness of the ecological niche is crucial.

For instance, without \eqref{BFZ}, the uniqueness of stationary solutions is not guaranteed anymore. In addition, it is not clear that the solutions of road-field models \eqref{mf} would converge to stationary solutions as $t$ goes to $+\infty$. As a consequence, the keystone of our analysis, Theorem \ref{persistence} (which states that the sign of $\l$ completely characterizes the long-time behavior of solutions) is not known in this context.

Therefore, in this framework, the notions of persistence and extinction, as stated in Definition \ref{def 1}, do not make much sense. For this reason, we introduce the following modified notions:

%In order to overcome these difficulties, we can introduce the notions of \emph{weak persistence} or \emph{weak extinction}.

\begin{definition}\label{def weak}

	For the system \eqref{mf}, that is, the system in the moving frame, we say that
	
	\begin{enumerate}[(i)]
		\item \emph{local extinction in the moving frame} occurs if every solution arising from a non-negative compactly supported initial datum converges locally uniformly to zero as $t$ goes to $+\infty$;

		\item \emph{local persistence in the moving frame} occurs if every solution arising from a non-negative not identically equal to zero compactly supported initial datum satisfies, for every $R>0$,
		$$
		\liminf_{t\to+\infty} \left(\inf_{x\in I_R} u(t,x)\right) >0, \quad \liminf_{t\to+\infty} \left(\inf_{(x,y)\in \O_R} v(t,x,y)\right) >0.
		$$
	\end{enumerate}
\end{definition}
The fact that we are working in the moving frame is important. Indeed, in a system exhibiting a climate change ($c\neq 0$), if the favorable zone is bounded, i.e., \eqref{BFZ} is verified, then every solution of the road-field model \eqref{frwcc} in the original frame goes to zero locally uniformly as $t$ goes to $+\infty$ (even when we have persistence in the moving frame).

We have the following weak version of Theorem \ref{persistence}:
\begin{theorem}\label{th weak}
	Let $\lambda_{1}$ be the generalized principal eigenvalue of system \eqref{lsmf}, with $m$ not necessarily satisfying \eqref{BFZ}. 
	
	\begin{enumerate}[(i)]
		
		\item If $\lambda_{1}<0$, local persistence in the moving frame occurs for \eqref{mf}.
					
		\item If $\l >0$, local extinction in the moving frame occurs for \eqref{mf}.
		
	%	\item If $\lambda_{1}\geq0$,  system \eqref{mf} does not admit any bounded positive stationary solution and there is.

	\end{enumerate}
\end{theorem}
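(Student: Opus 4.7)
The plan is to follow the strategy of the proof of Theorem~\ref{persistence}, retaining the sub- and supersolution constructions based on generalized principal eigenfunctions but bypassing the uniqueness step of Proposition~\ref{Liouville}, which is the only place in that argument where \eqref{BFZ} is genuinely used. The other tools remain available: the parabolic comparison principle Proposition~\ref{pro:cp}, the characterization of $\l$ as a decreasing limit of $\l^R$ in Proposition~\ref{limit}, the KPP hypothesis~\eqref{KPPCC}, and the saturation condition~\eqref{sat}.

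For statement~$(ii)$, assume $\l>0$ and let $(\phi,\psi)$ be a generalized principal eigenfunction provided by Proposition~\ref{limit}. Applying the elliptic strong maximum principle and Hopf's lemma to the eigenvalue system shows that $(\phi,\psi)>(0,0)$ strictly. I would then check by direct computation that for every $K>0$ the pair
\[
(U,V)(t,x,y):=K\,e^{-\l t}\bigl(\phi(x),\psi(x,y)\bigr)
\]
is a supersolution of~\eqref{mf}: the equations on the road and the boundary relation are equalities (the exponential factor cancels the eigenvalue), while the inequality in the field follows from the Fisher-KPP bound $f(x,y,v)\leq m(x,y)v$. Given a bounded compactly supported initial datum $(u_0,v_0)$, the strict positivity and local boundedness of $(\phi,\psi)$ allow one to choose $K$ such that $(u_0,v_0)\leq K(\phi,\psi)$; Proposition~\ref{pro:cp} then yields $(u,v)(t,\cdot)\leq K\,e^{-\l t}(\phi,\psi)$, which tends to $(0,0)$ locally uniformly since $\l>0$ and $(\phi,\psi)$ is locally bounded.

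For statement~$(i)$, assume $\l<0$ and use Proposition~\ref{limit} to choose $R$ with $\l^R<0$. Using the $C^1$-regularity of $f$ at $v=0$, one verifies, exactly as in the proof of Theorem~\ref{persistence}$(i)$, that for $\varepsilon>0$ sufficiently small, the pair $\varepsilon(\phi_R,\psi_R)$ extended by zero outside $I_R$ and $\O_R$ is a generalized stationary subsolution of~\eqref{mf}. The saturation condition~\eqref{sat} still supplies the constant stationary supersolution $(\nu M/\mu,M)$ for $M\geq S$. Consequently, the solution $(\underline{u},\underline{v})$ of~\eqref{mf} starting from $\varepsilon(\phi_R,\psi_R)$ is time-nondecreasing and bounded above, and by monotonicity together with parabolic estimates it converges locally uniformly as $t\to+\infty$ to a stationary solution $(u_*,v_*)$, which is strictly positive by the elliptic strong maximum principle (Remark~\ref{req:cp}). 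For an arbitrary solution $(u,v)$ issued from a nontrivial compactly supported initial datum, Proposition~\ref{pro:cp} gives $(u(1,\cdot),v(1,\cdot,\cdot))>(0,0)$, so after decreasing $\varepsilon$ we have $\varepsilon(\phi_R,\psi_R)\leq (u(1,\cdot),v(1,\cdot,\cdot))$. A further comparison then yields $\liminf_{t\to+\infty}(u,v)(t+1,\cdot)\geq (u_*,v_*)$ locally uniformly, which is precisely local persistence in the sense of Definition~\ref{def weak}.

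The main subtlety is the convergence of $(\underline u,\underline v)$ in statement~$(i)$: without~\eqref{BFZ} we lose Proposition~\ref{Liouville}, so different initial data could in principle lead to different positive equilibria, and the solution from an arbitrary initial datum is only bounded below by $(u_*,v_*)$ rather than converging to it. This is exactly why the conclusion is weakened from persistence to \emph{local} persistence, and correspondingly in~$(ii)$ the exponential decay furnished by the eigenfunction is only uniform on compact sets, giving \emph{local} extinction rather than uniform extinction.
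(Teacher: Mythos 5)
Your proposal is correct and mirrors the paper's own argument: statement (ii) is proved via the supersolution $Ke^{-\lambda_1 t}(\phi,\psi)$ with $(\phi,\psi)$ the generalized principal eigenfunction, and statement (i) via the subsolution $\varepsilon(\phi_R,\psi_R)$ bounded above by the constant supersolution, exactly as in the paper. You also correctly identify the loss of Proposition~\ref{Liouville} as the reason the conclusions are weakened to \emph{local} persistence and extinction, which is the same observation made in the paper's proof.
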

The proof of this result is similar to that of Theorem \eqref{persistence}. We briefly sketch it here for completeness.
\begin{proof}
	Let $(u,v)$ be a solution of \eqref{mf} arising from a compactly supported non-null initial datum.
	
	\medskip
	\emph{Statement $(i)$.}\\
	As in the proof of statement $(i)$ of Theorem \ref{persistence}, we can take $R>0$ and $\e>0$ such that $\e (\phi_R,\psi_R)$ is a stationary subsolution of \eqref{mf} and such that $(u(1,\cdot),v(1,\cdot,\cdot) \geq \e(\phi_R,\psi_R)$. Then, the parabolic comparison principle yields that the solution of \eqref{mf} arising from the initial datum $\e(\phi_R,\psi_R)$ is time-increasing and converges to a positive stationary solution, hence statement $(i)$ of Theorem \ref{th weak} follows. 
	
	In the proof of statement $(i)$ of Theorem \ref{persistence}, we also used $(\frac{\nu}{\mu}M,M)$, with $M>0$ large enough, as an initial datum in \eqref{mf} to bound from above $(u,v)$. We could do the same here. However, for want of an uniqueness result, the solution of \eqref{mf} arising from $(\frac{\nu}{\mu}M,M)$ may converge to a stationary solution different from the one obtained by starting with the initial datum $\e(\phi_R,\psi_R)$. 
	
	\medskip
	\emph{Statement $(ii)$.}\\
  Observe that, for $M>0$, the pair $Me^{-\l t}(\phi,\psi)$ is supersolution of \eqref{mf}. Taking $A$ large enough so that $(u_0,v_0)\leq A$, the parabolic comparison principle yields that $(u,v)$ goes to zero, locally uniformly as $t$ goes to $+\infty$. The converge to zero is only locally uniform because the generalized principal eigenvalue $(\phi,\psi)$ may not be bounded.
\end{proof} 
%
%
%We emphasize that Theorem \ref{th weak} is significantly weaker than Theorem \ref{persistence}: first, it does not say anything on the case where $\l =0$. Second, when $\l >0$, it only says that the solutions go to zero locally uniformly. 
%
Is is natural to wonder whether there are situations where weak extinction occurs but not extinction. In such situation, the population would move inside favorable zones but would neither settle definitively there nor go extinct. We leave this as an open question.

%Another interesting setting is to consider a road that makes an angle with the road. For instance, one could consider 

%It is interesting to observe that our result concerning the influence of the diffusion on the field, Theorem \ref{d infinity}, also holds true up to some modification.

%We can mention that the statement $(i)$ of Proposition \eqref{}

\section{Conclusion}

We have introduced a model that aims at describing the effect of a line with fast
diffusion (a road) on the dynamics of an ecological niche. We incorporate in the model
the possibility of a climate change.
We have found that this model exhibits two contrasting  influences of the road. The first one is that the 
presence of the line with fast diffusion can lead to the extinction of a
population that would otherwise persist: the effect of the line is deleterious. On the other hand, if 
the ecological niche is moving, because of a climate change, then there are situations where 
a population that would otherwise be doomed to extinction manages to survive thanks to the 
presence of the~road.

The first  result is not a priori intuitive: in our model, the line with fast diffusion is not lethal, in the 
sense that there is no death term there. The second result, that is,
the fact that the line can ``help" the population, is also surprising, because there is no reproduction on the line either. 

These results are derived through a careful analysis of the properties of
a notion of {\em generalized principal eigenvalue}
for elliptic systems set in different spatial dimensions, introduced in
our previous work~\cite{BDR1}.\\

%\end{proof}

\bigskip
\noindent \textbf{Acknowledgements:}  This work has been supported by the ERC Advanced Grant 2013 n. 321186 ``ReaDi -- Reaction-Diffusion Equations, Propagation and Modelling'' held by H.~Berestycki. This work was also partially supported by the French National Research Agency (ANR), within  project NONLOCAL ANR-14-CE25-0013.
%During this research, L.~Rossi was on academic leave from the University of Padova.
This paper was completed while H. Berestycki was visiting the HKUST Jockey Club Institute of Advanced Study of Hong Kong University of Science and Technology, and the Department of Mathematics at Stanford University, and their support is gratefully acknowledged.

\end{document}